\documentclass[12pt]{amsart}
\usepackage{amssymb}
\usepackage{amsthm,amsfonts}

\usepackage{amsfonts}
\usepackage{amsmath}
\usepackage[abbrev]{amsrefs}
\usepackage{enumerate}
\usepackage{color}
\usepackage[colorlinks,linkcolor=blue]{hyperref}
\usepackage[nameinlink]{cleveref}

\voffset-1.8 cm \hoffset -2.6 cm \textwidth 17.0 cm \textheight 23.2cm
\vfuzz2pt
\hfuzz2pt

\newtheorem{Theorem}{Theorem}[section]

\newtheorem{Lemma}[Theorem]{Lemma}
\newtheorem{Corollary}[Theorem]{Corollary}
\newtheorem{Proposition}[Theorem]{Proposition}
\newtheorem{Definition}[Theorem]{Definition}

\newtheorem{corollary}[Theorem]{Corollary}
\newtheorem{proposition}[Theorem]{Proposition}
\newtheorem{definition}[Theorem]{Definition}

\theoremstyle{definition}

\begin{document}
	
	\title[Remark on the Daugavet property for complex Banach spaces]{Remark on the Daugavet property for complex Banach spaces}
	
		\keywords{Daugavet points, $\Delta$-points, alternative convexity or smoothness, nonsquareness, polynomial Daugavet property}
	\subjclass[2010]{Primary 46B20; Secondary 46B04, 46E40, 46J10}
	
	\author{Han Ju Lee}
	\address{Department of Mathematics Education, Dongguk University - Seoul, 04620 (Seoul), Republic of Korea}
	\email{hanjulee@dgu.ac.kr}
	
	\author{Hyung-Joon Tag*}
	\address{Department of Mathematics and Statistics, Sejong University, 05006 (Seoul), Republic of Korea}
	\email{hjtag4@gmail.com}
	\date{\today}
	
\maketitle

\begin{abstract}
In this article, we study the Daugavet property and the diametral diameter two properties in complex Banach spaces. The characterizations for both Daugavet and $\Delta$-points are revisited in the context of complex Banach spaces. We also provide relationships between some variants of alternative convexity and smoothness, nonsquareness, and the Daugavet property. As a consequence, every strongly locally uniformly alternatively convex or smooth (sluacs) Banach space does not contain $\Delta$-points from the fact that such spaces are locally uniformly nonsquare. We also study the convex diametral local diameter two property (convex-DLD2P) and the polynomial Daugavet property in the vector-valued function space $A(K, X)$. From an explicit computation of the polynomial Daugavetian index of $A(K, X)$, we show that the space $A(K, X)$ has the polynomial Daugavet property if and only if either the base algebra $A$ or the range space $X$ has the polynomial Daugavet property. Consequently, we obtain that the polynomial Daugavet property, Daugavet property, diameteral diameter two properties, and property ($\mathcal{D}$) are equivalent for infinite-dimensional uniform algebras. 
\end{abstract}

\section{Introduction}
In the theory of Banach spaces, various properties that are related to certain behaviors of vector measures and bounded linear operators have been studied from a geometrical point of view. We focus on Daugavet property and diametral diameter two properties in this article. Let $X$ be a Banach space on $\mathbb{F} = \mathbb{R}$ or $\mathbb{C}$. For a Banach space $X$, we denote the unit ball and the unit sphere of $X$ by $B_X$ and $S_X$, respectively. The dual space of $X$ is denoted by $X^*$. For $\epsilon > 0$, $x^* \in S_{X^*}$, and $x \in S_X$, a slice $S(x^*, \epsilon)$ of the unit ball $B_X$ is defined by $S(x^*, \epsilon)=\{x \in B_X: \text{Re}\,x^*x > 1 - \epsilon\}$ and a weak$^*$-slice $S(x, \epsilon)$ of $B_{X^*}$ by $S(x, \epsilon) = \{x^* \in B_{X^*}: \text{Re}\,x^*x > 1 - \epsilon\}$.

 Many results on Banach spaces with the Radon-Nikod\'ym property have been obtained from this perspective. Even though its definition was originally given in terms of vector measures, it is now well-known that the Banach spaces with the Radon-Nikod\'ym property can be characterized by the existence of slices with arbitrarily small diameter as well as the existence of denting points. For more details on the geometrical aspect of the Radon-Nikod\'ym property and its application to other research topics in Banach spaces, we refer to \cite{DU}.  

A Banach space $X$ is said to have the {\it Daugavet property} if every rank-one operator $T: X \rightarrow X$ satisfies the following equation:
\[
\|I + T\| = 1 + \|T\|.
\] 
We call this equation the {\it Daugavet equation}. The spaces $C(K)$, where $K$ is a compact Hausdorff space without isolated points, $L_1(\mu)$, and $L_{\infty}(\mu)$ with a nonatomic measure $\mu$, are classical examples with the Daugavet property. The infinite-dimensional uniform algebras also have the Daugavet property if and only if their Shilov boundaries do not have isolated points \cite{Wo, LT}.  Moreover, the Daugavet property in Musielak-Orlicz spaces \cite{KK}, in Lipschitz-free spaces \cite{JR}, and in rearrangement-invariant Banach function lattices \cite{AKM, KMMW} have been examined. It is well-known that every slice of $B_X$ has diameter two if $X$ has the Daugavet property, which tells us that Banach spaces with this property are on the opposite spectrum to the Radon-Nikod\'ym property.

The following characterization allows us to study the Daugavet property with slices.
  
\begin{Lemma}\cite[Lemma 2.2]{KSSW}
	\label{lem:Daug}
	The following are equivalent.
	\begin{enumerate}[{\rm(i)}]
		\item A Banach space $(X,\|\cdot\|)$ has the Daugavet property.
		\item\label{Daugii} For every slice $S = S(x^*,\epsilon)$ and $x \in S_X$, there exists $y\in S_{X}\cap S$ such that $\|x+y\|>2-\epsilon$.
		\item\label{Daugiii} For every weak$^{*}$-slice $S^* = S(x,\epsilon)$ and every $x^* \in S_{X^*}$, there exists $y^*\in S_{X^*}\cap S^*$ such that $\|x^*+y^*\|>2-\epsilon$.
		\end{enumerate}
	\end{Lemma}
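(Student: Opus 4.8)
The plan is to establish (i)$\Leftrightarrow$(ii) directly on $X$ and then to obtain (i)$\Leftrightarrow$(iii) by transposing that argument to $X^*$. First I would record two reductions. Every rank-one operator is a positive multiple of a norm-one rank-one operator $x^*\otimes x$, acting by $z\mapsto x^*(z)\,x$ with $x^*\in S_{X^*}$ and $x\in S_X$, and since for a fixed rank-one $T$ the function $t\mapsto\|I+tT\|$ is convex on $[0,\infty)$ and bounded above by $1+t\|T\|$, verifying $\|I+T\|=2$ for every norm-one rank-one $T$ already yields $\|I+T\|=1+\|T\|$ for every rank-one $T$; so it suffices to treat norm-one operators. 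Second, $\|(I_X+T)^*\|=\|I_X+T\|$, and the adjoint of $x^*\otimes x$ (with $x\in S_X$) is $\widehat{x}\otimes x^*$ acting on $X^*$, where $\widehat{x}\in S_{X^{**}}$ is the canonical image of $x$; the point to keep in mind is that the $w^*$-slices $S(x,\epsilon)$ permitted in (iii) are exactly the slices of $B_{X^*}$ induced by such canonical images, i.e.\ precisely the functionals for which $\widehat{x}\otimes x^*$ is a genuine adjoint.

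For (i)$\Rightarrow$(ii): given a slice $S(x^*,\epsilon)$ and $x\in S_X$, I would apply the Daugavet equation to $T=x^*\otimes x$, so $\|I+T\|=2$ and there is $z\in B_X$ with $\|z+x^*(z)\,x\|>2-\delta$; the estimate $2-\delta<\|z\|+|x^*(z)|$ forces $|x^*(z)|>1-\delta$ and $\|z\|>1-\delta$. Multiplying $z$ by the unimodular scalar $\overline{x^*(z)}/|x^*(z)|$ to align phases and then renormalising produces $y\in S_X$ with $\operatorname{Re}x^*(y)>1-\delta$, and testing $z+x^*(z)x$ against a norming functional yields $\|x+y\|>2-2\delta$; take $\delta=\epsilon/2$. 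For (ii)$\Rightarrow$(i): given a norm-one $T=x^*\otimes y$ and $\epsilon>0$, apply (ii) to the slice $S(x^*,\epsilon)$ with $x:=y$, obtaining $u\in S_X\cap S(x^*,\epsilon)$ with $\|u+y\|>2-\epsilon$; writing $\alpha=x^*(u)$, we have $|\alpha|\le 1$ and $\operatorname{Re}\alpha>1-\epsilon$, hence $|1-\alpha|^2=1-2\operatorname{Re}\alpha+|\alpha|^2\le 2\epsilon$, so $\|(I+T)u\|=\|u+\alpha y\|\ge\|u+y\|-|1-\alpha|>2-\epsilon-\sqrt{2\epsilon}$, and letting $\epsilon\to 0$ gives $\|I+T\|=2$.

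The equivalence (i)$\Leftrightarrow$(iii) is then obtained by rerunning these two arguments on $X^*$ with $T$ replaced by its adjoint. For (i)$\Rightarrow$(iii) I would apply the Daugavet equation to $x^*\otimes x$ on $X$, note $\|I_{X^*}+\widehat{x}\otimes x^*\|=2$, and extract $y^*\in S_{X^*}\cap S(x,\epsilon)$ with $\|x^*+y^*\|>2-\epsilon$ exactly as in the proof of (i)$\Rightarrow$(ii). For (iii)$\Rightarrow$(i), the adjoint of a norm-one $T=x^*\otimes y$ is $\widehat{y}\otimes x^*$ on $X^*$; applying (iii) to the $w^*$-slice $S(y,\epsilon)$ and the point $x^*\in S_{X^*}$ gives $v^*\in S_{X^*}\cap S(y,\epsilon)$ with $\|x^*+v^*\|>2-\epsilon$, whence $\|(I_{X^*}+\widehat{y}\otimes x^*)v^*\|=\|v^*+v^*(y)\,x^*\|>2-\epsilon-\sqrt{2\epsilon}$, and therefore $\|I_X+T\|=\|I_{X^*}+T^*\|=2$.

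I expect the genuinely delicate points to be bookkeeping rather than ideas. One is getting the correspondence $x^*\otimes x\leftrightarrow\widehat{x}\otimes x^*$ right, so that the $w^*$-slices allowed in (iii) are precisely the ones the dual argument can feed on (and not all of $B_{X^{**}}$). The other is the complex phase issue: the slices are defined through $\operatorname{Re}$, so a near-maximiser $z$ must be rotated by a unimodular scalar before it can lie in a slice, and one needs the elementary bound $|1-\alpha|\le\sqrt{2\epsilon}$ valid when $|\alpha|\le 1$ and $\operatorname{Re}\alpha>1-\epsilon$. Once these are in place, the complex case runs verbatim as the real one.
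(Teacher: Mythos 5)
Your argument is correct and is essentially the standard proof of this lemma from the cited source \cite{KSSW} (the paper itself only quotes the result without proof): reduce to norm-one rank-one operators via convexity of $t\mapsto\|I+tT\|$, pass between the Daugavet equation for $x^*\otimes x$ and the slice condition using a unimodular rotation plus the bound $|1-\alpha|\le\sqrt{2\epsilon}$, and dualize through the adjoint $\widehat{x}\otimes x^*$ to get the weak$^*$-slice version. All the quantitative steps check out (in particular, testing against a norming functional of $\gamma z+|x^*(z)|x$ does deliver $\|x+y\|>2-2\delta$ even after renormalizing $z$), so nothing further is needed.
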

 
Later, the diametral diameter two properties (diametral D2Ps) and the property ($\mathcal{D}$) have gained attention \cite{AHLP, W} from many researchers. They are known to be weaker than the Daugavet property.
 
\begin{Definition}
	\begin{enumerate}[\rm(i)]
		\item A Banach space $X$ has property ($\mathcal{D}$) if for every rank-one, norm-one projection $P: X \rightarrow X$ satisfies $\|I - P\| = 2$.
		\item A Banach space $X$ has the diametral local diameter two property (DLD2P) if for every slice $S$ of the unit ball, every $x \in S \cap S_X$, and every $\epsilon > 0$ there exists $y \in S$ such that $\|x - y\| \geq 2 - \epsilon$.
		\item A Banach space $X$ has the diametral diameter two property (DD2P) if for every nonempty weakly open subset $W$ of the unit ball, every $x \in W \cap S_X$, and every $\epsilon > 0$, there exists $y \in W$ such that $\|x- y\| \geq 2 - \epsilon$.
	\end{enumerate}
\end{Definition}
\noindent The first known example that possesses the property ($\mathcal{D}$) is a certain subspace of $L_1$ constructed with martingales \cite{BR}. Later on, this space was shown to have the Daugavet property \cite{KW}. In view of \cite{IK}, every rank-one projection $P$ on a Banach space $X$ with the DLD2P satisfies $\|I - P\| \geq 2$, and so the DLD2P implies the property ($\mathcal{D}$). As a matter of fact, the property ($\mathcal{D}$) was thought to be equivalent to the DLD2P. However, since a scalar multiple of a projection is not a projection, the validity of the equivalence is not clear up to now \cite{AHLP}. The implication (iii) $\implies$ (ii) holds because every slice is a weakly open subset of the unit ball.  

The DLD2P and the Daugavet property can be also considered from a local perspective by using $\Delta$-points and Daugavet points. Let  $\Delta_{\epsilon}(x) = \{y \in B_X: \|x - y\| \geq 2 - \epsilon\}$, where $x \in S_X$ and $\epsilon > 0$.
\begin{Definition}
	\begin{enumerate}[\rm(i)]
		\item A point $x \in S_X$ is a $\Delta$-point if $x \in \overline{conv}\Delta_{\epsilon}(x)$ for every $\epsilon > 0$.
		\item A point $x \in S_X$ is a Daugavet point if $B_X = \overline{conv}\Delta_{\epsilon}(x)$ for every $\epsilon > 0$.
	\end{enumerate} 
\end{Definition}
\noindent Notice that the set $\Delta_{\epsilon}(x)$ is defined independently of the scalar fields $\mathbb{F} = \mathbb{R}$ or $\mathbb{C}$ on Banach spaces. Hence we may use the same definitions for $\Delta$-points and Daugavet points for complex Banach spaces. We denote the set of all $\Delta$-points of $X$ by $\Delta_X$. There is also a weaker property than the Daugavet property described by the set $\Delta_X$. 
\begin{Definition}
	A Banach space $X$ has the convex diametral local diameter two property (convex-DLD2P) if $\overline{conv}\Delta_X = B_X$.
\end{Definition}
It is well-known that a real Banach space $X$ has the Daugavet property (resp. the DLD2P) if and only if every point on the unit sphere is a Daugavet point (resp. a $\Delta$-point).

We mention that many recent results on the Daugavet property, the diametral D2Ps, Daugavet points, and $\Delta$-points have been mostly revolved around {\it real} Banach spaces. But there are several results concerning these concepts in complex Banach spaces; see \cite{Kad, MPR}. For a real Banach space $X$, it is well-known that  $\Delta$-points are connected to certain behaviors of slices of the unit ball and of rank-one projections. The notation $x^* \otimes x$ stands for a linear operator from $X$ to itself defined by $(x^* \otimes x)(y) = x^*y \cdot x$, where $x, y \in X$ and $x^* \in X^*$. 

\begin{Theorem}\label{th:realdelta}\cite{AHLP}
	Let $X$ be a real Banach space. Then the following statements are equivalent.
	\begin{enumerate}[\rm(i)]
		\item $x \in S_X$ is a $\Delta$-point.
		\item For every slice $S$ of $B_X$ with $x \in S \cap S_X$ and $\epsilon > 0$, there exists $y \in S$ such that $\|x - y\| \geq 2 - \epsilon$.
		\item For every rank-one projection $P = x^* \otimes x$ with $x^*x = 1$, we have $\|I - P\| \geq 2$.
	\end{enumerate}
\end{Theorem}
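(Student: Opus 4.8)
The plan is to prove the cycle (iii)$\Rightarrow$(i)$\Leftrightarrow$(ii)$\Rightarrow$(iii), the two workhorses being the Hahn--Banach separation theorem (which turns "$x\in\overline{conv}\,\Delta_\epsilon(x)$" into a statement about slices) and the elementary identity $\|(I-(x^*\otimes x))y\|=\|y-(x^*y)x\|$ combined with the triangle inequality. For (i)$\Leftrightarrow$(ii), fix $\epsilon>0$ and put $K=\overline{conv}\,\Delta_\epsilon(x)$, a closed convex subset of $B_X$. Then $x\in K$ is equivalent, by Hahn--Banach, to $x^*x\le\sup_{y\in\Delta_\epsilon(x)}x^*y$ for every $x^*\in S_{X^*}$ (no real parts are needed since $X$ is real). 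If $x$ lies in a slice $S(x^*,\alpha)$, i.e.\ $x^*x>1-\alpha$, this inequality produces $y\in\Delta_\epsilon(x)$ with $x^*y>1-\alpha$, i.e.\ $y\in S(x^*,\alpha)\cap\Delta_\epsilon(x)$, which is exactly (ii); conversely, (ii) gives $\sup_{\Delta_\epsilon(x)}x^*\ge1-\alpha$ for every $\alpha>1-x^*x$, hence $\sup_{\Delta_\epsilon(x)}x^*\ge x^*x$, so $x\in K$. The only extra ingredient here is that a member of $K$ is a norm limit of convex combinations $\sum\lambda_iy_i$ with $y_i\in\Delta_\epsilon(x)$, and $\sum\lambda_ix^*y_i>1-\alpha$ forces $x^*y_i>1-\alpha$ for some $i$. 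All of this is routine.

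For (ii)$\Rightarrow$(iii), let $P=x^*\otimes x$ with $x^*\in S_{X^*}$ and $x^*x=1$. Since $x^*x=1$, the point $x$ lies in the slice $S(x^*,\epsilon)$, so (ii) furnishes $y\in S(x^*,\epsilon)$ with $\|x-y\|\ge2-\epsilon$; using $x^*y\in(1-\epsilon,1]$ we get
\[
\|(I-P)y\|=\|y-(x^*y)x\|\ge\|y-x\|-|1-x^*y|\,\|x\|\ge(2-\epsilon)-\epsilon=2-2\epsilon ,
\]
and since $\epsilon>0$ was arbitrary, $\|I-P\|\ge2$.

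For (iii)$\Rightarrow$(i), fix $\epsilon>0$ and suppose $x\notin K=\overline{conv}\,\Delta_\epsilon(x)$. Separate: choose $g^*\in S_{X^*}$ with $\sup_Kg^*<g^*x$, and fix a norming functional $x_0^*\in S_{X^*}$ (so $x_0^*x=1$). Running the estimate of the previous step backwards for $P=x_0^*\otimes x$ shows that $\|I-P\|\ge2$ forces $\sup_{\Delta_\epsilon(x)}x_0^*=1$ (from $\|z-(x_0^*z)x\|>2-\tau$ one gets $|x_0^*z|>1-\tau$ and then $\sign(x_0^*z)\,z\in\Delta_{2\tau}(x)$ with $x_0^*$-value $>1-\tau$), so $\sup_Kx_0^*=1$. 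For $t\in(0,1)$ set $p_t^*=(1-t)g^*+tx_0^*$; then $\|p_t^*\|\le1$ while
\[
p_t^*x-\sup_Kp_t^*\ \ge\ (1-t)\bigl(g^*x-\sup_Kg^*\bigr)\ >\ 0 ,
\]
so $p_t^*$ still strictly separates $x$ from $K$, and $p_t^*x=(1-t)g^*x+t$ is close to $1$ for $t$ near $1$; hence $q_t^*:=p_t^*/(p_t^*x)$ has $q_t^*x=1$ and $\|q_t^*\|\le1/(p_t^*x)\to1$. Apply (iii) to $q_t^*\otimes x$: for each $\tau>0$ there is $z\in B_X$ with $\|z-(q_t^*z)x\|>2-\tau$; putting $\mu=q_t^*z$ we have $|\mu|\le\|q_t^*\|$ and $|\mu|\ge\|z-\mu x\|-\|z\|>1-\tau$, so $|\mu|$ is close to $1$ once $t$ is close to $1$. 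Now $y:=\sign(\mu)\,z\in B_X$ satisfies $\|x-y\|\ge\|z-\mu x\|-|1-|\mu||>2-\epsilon$ for suitable $t,\tau$, so $y\in\Delta_\epsilon(x)\subseteq K$, whereas $p_t^*y=(p_t^*x)|\mu|>(1-\tau)(p_t^*x)$. Letting $\tau\to0$ gives $\sup_Kp_t^*\ge p_t^*x$, contradicting the separation. Therefore $x\in K$, and since $\epsilon$ was arbitrary, $x$ is a $\Delta$-point.

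The step I expect to be the real obstacle is (iii)$\Rightarrow$(i): the passage from a separating functional $g^*$ (which need not norm $x$, and may be "far" from every norming functional of $x$) to a rank-one projection to which (iii) applies usefully. The tilting device above — replacing $g^*$ by $p_t^*=(1-t)g^*+tx_0^*$ so as to (a) preserve the separation and (b) force the rescaled functional $q_t^*$ to have norm near $1$, which in turn pins the witness scalar $\mu$ near $\pm1$ and makes $\sign(\mu)z$ land in $\Delta_\epsilon(x)$ — is exactly what makes the argument work, but the interplay of the three parameters $t$, $\tau$, $\epsilon$ has to be arranged with care, so I would write out those estimates in full rather than leaving them implicit.
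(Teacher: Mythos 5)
Your equivalence (i) $\iff$ (ii) via Hahn--Banach is correct, and your direct separation argument for (iii) $\implies$ (i) --- tilting the separating functional $g^*$ toward a norming functional $x_0^*$ so that the rescaled $q_t^*=p_t^*/(p_t^*x)$ has norm close to $1$, which pins the witness scalar $\mu$ near $\pm1$ --- is sound. It is a legitimate alternative to the route the paper takes for the complex analogue (Theorem \ref{prop:deltaequiv}), where the same ``tilting'' is delegated to Lemma \ref{lem:subslice} and one passes through statement (ii) rather than separating directly.

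The genuine gap is in (ii) $\implies$ (iii). You take $P=x^*\otimes x$ with $x^*\in S_{X^*}$ and $x^*x=1$, but (iii) quantifies over \emph{all} $x^*$ with $x^*x=1$, and such a functional can have $\|x^*\|>1$ (e.g.\ $x=(1,0)$ and $x^*=(1,1)$ in $\ell_\infty^2$). For $\|x^*\|=c>1$ your one-line estimate collapses: the slices determined by $x^*/c$ that contain $x$ are $\{y\in B_X: x^*y>c(1-\alpha)\}$ with $c(1-\alpha)<1$, so for $y$ in such a slice one only knows $1-\delta<x^*y\le c$; hence $|1-x^*y|$ can be as large as $c-1$, and $\|y-(x^*y)x\|\ge\|y-x\|-|1-x^*y|$ yields nothing better than $2-\epsilon-(c-1)$. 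This case is precisely the nontrivial content of the implication, which is why the paper's proof of the complex version does not argue directly: it first shows that (ii) forces $\|I-T\|=2$ for the \emph{normalized} operator $T=\frac{x^*}{\|x^*\|}\otimes x$ (the lengthy computation with the convex function $g(\lambda)=\left\|\lambda\frac{x}{\|x^*\|}-(1-\lambda)\frac{y}{\|y\|}\right\|$), and then upgrades to the projection $P=\|x^*\|\,T$ by convexity of $\varphi(\lambda)=\|I-\lambda T\|$: since $\varphi(0)=1$ and $\varphi(1)=2$, convexity gives $\varphi(s)\ge2$ for all $s\ge1$, in particular for $s=\|x^*\|$. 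Note also that your own proof of (iii) $\implies$ (i) applies (iii) to functionals $q_t^*$ whose norm is only \emph{close} to $1$, not equal to $1$, so the restricted version of (iii) that your argument actually delivers would not even close your cycle. You need to insert the normalization-plus-convexity step (or an equivalent device) to repair (ii) $\implies$ (iii).
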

\noindent Even though the complex analogue of this relationship may be well-known to the specialists, we will state and prove it for completeness. In addition, while the Daugavet property for complex Banach spaces can be examined through rank-one real-linear operators \cite{KMM}, it has not been known whether we can examine the DLD2P in a similar spirit with rank-one real-projections. We also study this here.

Since a denting point is always contained in a slice of arbitrarily small diameter, such a point is neither a $\Delta$-point nor a Daugavet point. This implies that any (locally) uniformly rotund real Banach spaces cannot have $\Delta$-points. Recently, identifying the Banach spaces that do not contain these points has been an active research topic. For example, it is shown in \cite{ALMP} that every uniformly nonsquare real Banach space does not have $\Delta$-points. Furthermore, a locally uniformly nonsquare real Banach space does not have $\Delta$-points \cite{KLT}. We will examine strongly locally uniformly alternatively convex or smooth (sluacs) Banach spaces in this article. We mention that alternative convexity or smoothness are related to the anti-Daugavet property and the nonsquareness property \cite{J, WSL}.

Banach spaces that satisfy the Daugavet equation for weakly compact polynomials are also studied in \cite{CGMM, CGKM, MMP}. For Banach spaces $X, Y$, let $\mathcal{L}(^kX;Y)$ be the space of bounded  $k$-linear mappings from $X$ to $Y$ and let $\Psi_k: X \rightarrow X^k$ be a diagonal mapping defined by 
\[
\Psi_k(x) = \underset{k\,\,\, \text{times}}{\underbrace{(x, x, \dots, x)}}.
\]
A mapping is called a bounded {\it $k$-homogeneous polynomial} from $X$ to $Y$ if it is the composition of $\Psi_k$ with an element in $\mathcal{L}(^kX;Y)$. We denote the set of all bounded $k$-homogenous polynomials from $X$ to $Y$ by $\mathcal{P}(^k X;Y)$. A {\it polynomial} is a finite sum of bounded homogenous polynomials from $X$ to $Y$. We also denote the set of all polynomials from $X$ to $Y$ by $\mathcal{P}(X;Y)$ and the set of all scalar-valued continuous polynomials by $\mathcal{P}(X)$. The space $\mathcal{P}(X;Y)$ is a normed space endowed with the norm $\|P\| = \sup_{x \in B_X}\|Px\|_X$ and $\mathcal{P}(^k X;Y)$ is a closed subspace of $\mathcal{P}(X;Y)$.

We say a polynomial $P \in \mathcal{P}(X;Y)$ is weakly compact if $P(B_X)$ is a relatively weakly compact subset of $Y$. A Banach space $X$ is said to have the {\it polynomial Daugavet property} if every weakly compact polynomial $P \in \mathcal{P}(X; X)$ satisfies 
\[
\|I + P\| = 1 + \|P\|.
\]  
If $X$ has the polynomial Daugavet property, then the space also has the Daugavet property. It is also well-known that the polynomial Daugavet property can be described in terms of scalar-valued polynomials. Here the notation $p \otimes x$ stands for a polynomial from $X$ to itself defined by $(p \otimes x)(y) = p(y) \cdot x$, where $x,y \in X$ and $p \in \mathcal{P}(X)$. 

\begin{Theorem}\cite[Corollary 2.2]{CGMM}\label{th:polydauggen}
	Let $X$ be a real or complex Banach space. Then the following statements are equivalent:
	\begin{enumerate}[\rm(i)]
		\item $X$ has the polynomial Daugavet property.
		\item For every $p \in \mathcal{P}(X)$ with $\|p\| = 1$, every $x_0 \in S_X$, and every $\epsilon > 0$, there exist $\omega \in S_{\mathbb{C}}$ and $y \in B_X$ such that $\text{Re}\,\omega p(y) > 1 - \epsilon$ and $\|x_0 + \omega y\| > 2 - \epsilon$.
		\item For every $p \in \mathcal{P}(X)$ and every $x_0 \in X$, the polynomial $p \otimes x_0$ satisfies the Daugavet equation.
	\end{enumerate}	
\end{Theorem}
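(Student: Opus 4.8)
The plan is to establish the cycle $(i)\Rightarrow(iii)\Rightarrow(ii)\Rightarrow(i)$; the first two implications are routine and the content lies in $(ii)\Rightarrow(i)$. For $(i)\Rightarrow(iii)$, observe that $p\otimes x_0$ takes its values in the finite–dimensional subspace $\mathbb F x_0$, so $(p\otimes x_0)(B_X)$ is relatively weakly compact; thus $p\otimes x_0$ is a weakly compact polynomial and $(i)$ gives it the Daugavet equation. For $(iii)\Rightarrow(ii)$, given $p$ with $\|p\|=1$, $x_0\in S_X$ and $\epsilon>0$, apply $(iii)$ to $p$ and $x_0$: since $\|p\otimes x_0\|=\|p\|\,\|x_0\|=1$ we get $\|I+p\otimes x_0\|=2$, so some $y\in B_X$ has $\|y+p(y)x_0\|>2-\epsilon/2$, which forces $|p(y)|>1-\epsilon/2$; putting $\omega:=\overline{p(y)}/|p(y)|\in S_{\mathbb C}$ (read $\omega=\sign p(y)\in\{-1,1\}$ when $X$ is real) gives $\text{Re}\,\omega p(y)=|p(y)|>1-\epsilon$, while multiplying $y+p(y)x_0$ by the unimodular scalar $\omega$ and applying one triangle inequality gives $\|x_0+\omega y\|>2-\epsilon$.

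For $(ii)\Rightarrow(i)$ let $P\in\mathcal P(X;X)$ be weakly compact, $P\neq0$; since $\|I+P\|\leq1+\|P\|$ always, I must show $\|I+P\|\geq1+\|P\|$. I would let $K$ be the closed absolutely convex hull of $P(B_X)$: by the Krein--Smulian theorem $K$ is weakly compact, it is balanced, and $\sup_{v\in K}\|v\|=\|P\|$. Weakly compact convex sets have the Radon-Nikod\'ym property, so $K$ is the closed convex hull of its denting points; fix a small $\eta>0$ and choose a denting point $v_0$ of $K$ with $\|v_0\|>\|P\|-\eta$, and put $x_0:=v_0/\|v_0\|\in S_X$. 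Fix also $\gamma\in(0,\diam K)$ and, using that $v_0$ is denting, choose $x^*\in S_{X^*}$ and $\delta>0$ so that the slice $\Sigma:=\{v\in K:\text{Re}\,x^*(v)>M-\delta\}$, with $M:=\max_{v\in K}\text{Re}\,x^*(v)$, contains $v_0$ and satisfies $\diam\Sigma<\gamma$. The crucial elementary observation is that, because $K$ is balanced, the scalar polynomial $q:=x^*\circ P\in\mathcal P(X)$ has $\|q\|=M$ \emph{exactly} — rotating each value of $x^*$ on $P(B_X)\subseteq K$ onto $\R_{\geq0}$ stays inside $K$, and the reverse inequality comes from writing points of $K$ as limits of absolutely convex combinations — and in particular $M>0$. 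Now apply $(ii)$ to the norm–one polynomial $q/M$, the point $x_0$, and a tolerance $\epsilon$ small enough that $\epsilon\|P\|\leq\delta$: one obtains $\omega\in S_{\mathbb C}$ and $y\in B_X$ with $\text{Re}\bigl(\omega q(y)/M\bigr)>1-\epsilon$ and $\|x_0+\omega y\|>2-\epsilon$. The first inequality reads $\text{Re}\,x^*(\omega Py)=\text{Re}\,\omega q(y)>M(1-\epsilon)\geq M-\delta$, and since $K$ is balanced we have $\omega Py\in K$, hence $\omega Py\in\Sigma$ and therefore $\|\omega Py-v_0\|<\gamma$. Picking $f\in S_{X^*}$ norming $x_0+\omega y$ (so $\text{Re}\,f(x_0)>1-\epsilon$ and $\text{Re}\,f(\omega y)>1-\epsilon$) and using $f(v_0)=\|v_0\|f(x_0)$ together with $\|\omega Py-v_0\|<\gamma$, one gets
\[
\|y+Py\|=\|\omega y+\omega Py\|\ \geq\ \text{Re}\,f(\omega y)+\text{Re}\,f(\omega Py)\ >\ (1+\|P\|-\eta)(1-\epsilon)-\gamma .
\]
Letting $\epsilon\to0$ and then $\eta,\gamma\to0$ yields $\|I+P\|\geq1+\|P\|$.

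The main obstacle is precisely this implication $(ii)\Rightarrow(i)$, for two intertwined reasons. Since a polynomial is not linear, a super-level set $\{x\in B_X:\text{Re}\,x^*(Px)>c\}$ is not a slice of $B_X$, so the linear Daugavet argument cannot be copied verbatim; one is forced to feed the \emph{scalar} polynomial $x^*\circ P$ into $(ii)$ and afterwards recover the information about $Py$. And the rotation $\omega$ returned by $(ii)$ is not under our control, so $\omega Py$ need not lie in the weakly compact convex set generated by $P(B_X)$ unless that set is taken \emph{balanced} from the start — which is also exactly what upgrades $\|x^*\circ P\|\leq\max_K\text{Re}\,x^*$ to an equality and keeps all error terms proportional to the freely chosen $\epsilon$, so that the final estimate closes. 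Making this $\epsilon$-bookkeeping precise, and verifying the standard facts about weakly compact convex sets invoked above, is where the actual work lies.
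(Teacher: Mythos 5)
Your argument is correct, but note that the paper itself offers no proof of this statement: it is imported verbatim as \cite[Corollary 2.2]{CGMM}, so there is no in-paper argument to compare against. What you have written is essentially a self-contained reconstruction of the proof from that reference. The two easy implications are exactly as they should be: $p\otimes x_0$ has finite-dimensional range, hence is weakly compact, giving (i)$\Rightarrow$(iii); and the unimodular rotation $\omega=\overline{p(y)}/|p(y)|$ together with one triangle inequality gives (iii)$\Rightarrow$(ii). For the substantial implication (ii)$\Rightarrow$(i) you correctly identify and execute the standard localization technique that goes back to the linear weakly compact case: pass to the closed \emph{absolutely} convex hull $K$ of $P(B_X)$ (weakly compact by Krein--\v{S}mulian, and balancedness is exactly what makes $\|x^*\circ P\|=\max_K\operatorname{Re}x^*$ and keeps $\omega Py\in K$ for the uncontrolled rotation $\omega$), pick a denting point $v_0$ of $K$ of nearly maximal norm using that weakly compact convex sets are the closed convex hulls of their denting points, feed the normalized scalar polynomial $x^*\circ P/M$ into (ii), and use the small-diameter slice to conclude $\omega Py\approx v_0=\|v_0\|x_0$; the final estimate with a norming functional for $x_0+\omega y$ closes correctly, since $\|y+Py\|=\|\omega(y+Py)\|>(1+\|P\|-\eta)(1-\epsilon)-\gamma$ and all three parameters can be sent to zero independently in the right order ($\epsilon$ first, for fixed $\delta$). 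The only ingredients you take on faith are genuinely standard (RNP of weakly compact convex sets and Phelps' denting-point representation), so I see no gap; your proof is a faithful and complete account of the cited result.
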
 

In this article, we will look at the polynomial Daugavet property in a function space $A(K, X)$ over the base algebra $A$, which will be defined later. This class of function spaces includes uniform algebras and the space of Banach space-valued continuous functions on a compact Hausdorff space. The Daugavet property and the diametral D2Ps of the vector-valued function spaces $A(K, X)$ are studied in \cite{LT}. From the same article, assuming the uniform convexity of the range space $X$ and $A \otimes X \subset A(K, X)$, it is shown that the space $A(K, X)$ has the Daugavet property if and only if its base algebra also has the Daugavet property. It is also shown in \cite{CJT} that if $X$ has the Daugavet property then $A(K, X)$ has the Daugavet property. Here we attempt to find a necessary and sufficient condition for $A(K, X)$ to have the polynomial Daugavet property.

The article consists of three parts. In section 2, we revisit well-known facts about $\Delta$-points and Daugavet points in the context of complex Banach spaces. Like the Daugavet property, the DLD2P can be also analyzed by using rank-one real-projections (Theorem \ref{prop:deltaequiv}).  In Section 3, we examine the relationship between alternative convexity or smoothness and nonsquareness. From the fact that strongly locally uniformly alternatively convex or smooth (sluacs) Banach spaces are locally uniformly nonsquare (Proposition 3.8.(i)), no sluacs Banach space contains $\Delta$-points (Theorem \ref{prop:nod}). In section 4, we study the polynomial Daugavet property of the space $A(K, X)$. Here we explicitly compute the polynomial Daugavetian index of the space $A(K, X)$ (Theorem \ref{th:polydauind}). The space $A(K, X)$ has a bicontractive projection if the Shilov boundary of the base algebra $A$ has isolated points (Proposition \ref{prop:bicontractive}). As a consequence, we will show that $A(K, X)$ has the polynomial Daugavet property if and only if the base algebra $A$ or the range space $X$ has the polynomial Daugavet property (Corollary \ref{cor:polydaugAKX}).

\section{Delta-points and Daugavet points in complex Banach spaces}

In this section, we study $\Delta$-points and Daugavet points for complex Banach spaces. Although one may find that a certain portion of the proofs are similar to the real case, we include them in this article for completeness. However, we mention that the complex scalar field $\mathbb{C}$ provides something more, namely, a tool to analyze the Daugavet property and the DLD2P for complex Banach spaces through rank-one real-linear operators and rank-one real-projections, respectively. We recall the following useful lemma that holds for both real and complex Banach spaces:
 
\begin{Lemma}\cite[Lemma 2.1]{IK}\label{lem:subslice}
	Let $x^* \in S_{X^*}$, $\epsilon > 0$. Then for every $x \in S(x^*, \epsilon)$ and every $\delta \in(0, \epsilon)$ there exists $y^* \in S_{X^*}$ such that $x \in S(y^*, \delta)$ and $S(y^*, \delta) \subset S(x^*, \epsilon)$.
\end{Lemma}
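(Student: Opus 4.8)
The plan is to build $y^{*}$ by rotating $x^{*}$ slightly toward a support functional of $x$ and renormalising. I would begin by reformulating the target: for $y^{*}\in S_{X^{*}}$, the inclusion $S(y^{*},\delta)\subseteq S(x^{*},\epsilon)$ is equivalent to $\sup_{z\in K}\operatorname{Re}y^{*}z\le 1-\delta$, where $K:=\{z\in B_{X}:\operatorname{Re}x^{*}z\le 1-\epsilon\}$ is a closed convex set with $x\notin K$ (because $\operatorname{Re}x^{*}x>1-\epsilon$), while $x\in S(y^{*},\delta)$ just means $\operatorname{Re}y^{*}x>1-\delta$. So the task is to separate $x$ from $K$ by a norm-one functional ``at level $1-\delta$''. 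Observe that $x^{*}$ already has $\sup_{z\in K}\operatorname{Re}x^{*}z=1-\epsilon\le 1-\delta$, so if it happens that $\operatorname{Re}x^{*}x>1-\delta$ one is finished by taking $y^{*}=x^{*}$ (it is here that the hypothesis $\delta<\epsilon$ gives $S(x^{*},\delta)\subseteq S(x^{*},\epsilon)$). Thus the genuine case is $1-\epsilon<\operatorname{Re}x^{*}x=:a\le 1-\delta$. (Throughout I take $\|x\|=1$, as is the case in the applications; this is forced anyway, since $\operatorname{Re}y^{*}x>1-\delta$ requires $\|x\|>1-\delta$.)

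Then I would fix, by Hahn--Banach, $z^{*}\in S_{X^{*}}$ with $\operatorname{Re}z^{*}x=1$, and for $s\in(0,1)$ put $v_{s}^{*}=s\,x^{*}+(1-s)\,z^{*}$, $m_{s}=\|v_{s}^{*}\|\in(0,1]$, $y_{s}^{*}=v_{s}^{*}/m_{s}\in S_{X^{*}}$. Two elementary bounds organise the argument. Since $\operatorname{Re}v_{s}^{*}x=sa+(1-s)=1-s(1-a)$ and $m_{s}\le 1$, we get $\operatorname{Re}y_{s}^{*}x\ge 1-s(1-a)$, so the first requirement holds whenever $s<\delta/(1-a)$. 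On the other hand, for $z\in K$ one has $\operatorname{Re}v_{s}^{*}z\le s(1-\epsilon)+(1-s)=1-s\epsilon$, so, using the lower bound $m_{s}\ge\operatorname{Re}v_{s}^{*}x=1-s(1-a)$, $\sup_{z\in K}\operatorname{Re}y_{s}^{*}z\le(1-s\epsilon)/(1-s(1-a))$, which is $\le 1-\delta$ as soon as $s\bigl(\epsilon-(1-\delta)(1-a)\bigr)\ge\delta$; since $1-a<\epsilon$ the coefficient here is positive, so this reads $s\ge s_{1}$ for an explicit $s_{1}>0$.

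The main obstacle is then to see that these two ranges for $s$ overlap. With the crude bound $m_{s}\ge 1-s(1-a)$ one checks that $s_{1}<\delta/(1-a)$ exactly when $(1-a)(2-\delta)\le\epsilon$; in the remaining sub-range $\tfrac{\epsilon}{2-\delta}<1-a<\epsilon$ the normalising constant $m_{s}=\|s\,x^{*}+(1-s)\,z^{*}\|$ must be estimated from below more sharply -- for instance by also testing $v_{s}^{*}$ against vectors that nearly norm $x^{*}$, which keeps $m_{s}$ close to $1$ for small $s$ -- so as to enlarge the admissible window. Once a valid $s$ is selected, $y^{*}:=y_{s}^{*}$ satisfies $\operatorname{Re}y^{*}x>1-\delta$ and $\sup_{z\in K}\operatorname{Re}y^{*}z\le 1-\delta$, that is $x\in S(y^{*},\delta)\subseteq S(x^{*},\epsilon)$, as required; and the hypothesis $\delta<\epsilon$ is precisely what keeps that window nonempty.
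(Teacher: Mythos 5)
The paper does not prove this lemma at all---it is quoted verbatim from \cite{IK}---so there is no in-text argument to measure you against; I judge the proposal on its own. Your reduction is correct and is the right way to set the problem up: with $K=\{z\in B_X:\operatorname{Re}x^*z\le 1-\epsilon\}$ and $a=\operatorname{Re}x^*x$, the two requirements on $y_s^*=v_s^*/m_s$ are \emph{exactly} $m_s<\tfrac{1-s(1-a)}{1-\delta}$ (since $\operatorname{Re}v_s^*x=1-s(1-a)$ with equality) and, via $\sup_{z\in K}\operatorname{Re}v_s^*z\le 1-s\epsilon$, the sufficient condition $m_s\ge\tfrac{1-s\epsilon}{1-\delta}$. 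But the gap you flag is genuine, and the repair you sketch cannot close it. Testing $v_s^*=sx^*+(1-s)z^*$ against a vector $w$ that nearly norms $x^*$ only gives $\operatorname{Re}v_s^*w\ge s(1-\eta)-(1-s)$, which is useless for $s<1/2$; and the bound $m_s\ge 1-s(1-a)$ admits no uniform improvement, since it is attained with equality (take $X=\ell_1^2$, $x=(1,0)$, $z^*=(1,0)$, $x^*=(a,1)$). Worse, the window you do control, $s<\delta/(1-a)$, can miss every $s$ that actually works: for $X=\ell_1^2$, $x^*=(0.51,1)$, $x=(1,0)$, $z^*=(1,-1)$, $\epsilon=0.5$, $\delta=0.1$, the point $P_1=\bigl(\tfrac{1.5}{1.51},\tfrac{-0.01}{1.51}\bigr)$ lies outside $S(x^*,\epsilon)$ but inside $S(y_s^*,\delta)$ for all $s<1/2$, and the admissible $s$ turn out to sit near $0.83$, far above $\delta/(1-a)\approx 0.2$.

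The cure is not a sharper a priori estimate of $m_s$ but a different use of it: treat both requirements as the two-sided constraint $\tfrac{1-s\epsilon}{1-\delta}\le m_s<\tfrac{1-s(1-a)}{1-\delta}$ on $m_s$ itself, a band that is nonempty for every $s>0$ precisely because $1-a<\epsilon$. Since $s\mapsto m_s$ is continuous with $m_0=1<\tfrac{1}{1-\delta}$ and $m_1=1>\tfrac{1-\epsilon}{1-\delta}$ (this is where $\delta<\epsilon$ enters), the intermediate value theorem yields $s_0\in(0,1)$ with $m_{s_0}=\tfrac{1-s_0\epsilon}{1-\delta}$; there the lower constraint holds with equality and the upper one strictly, so $y_{s_0}^*$ works, with no case distinction on $a$ and no need for the preliminary case $a>1-\delta$. (Your crude bounds fail precisely because you use $m_s\le 1$ for one requirement and $m_s\ge 1-s(1-a)$ for the other simultaneously.) Two smaller points: the reduction to $\|x\|=1$ should be a standing hypothesis rather than something ``forced''---for $1-\epsilon<\|x\|\le 1-\delta$ the conclusion is genuinely unattainable, so the lemma must be read for $x\in S_X$, which is how the paper always applies it---and for $1-\delta<\|x\|<1$ the same argument runs with $\operatorname{Re}z^*x=\|x\|$ in place of $1$.
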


\begin{Theorem}\label{prop:deltaequiv}
	Let $X$ be a complex Banach space and $x \in S_X$. Then the following statements are equivalent.
	\begin{enumerate}[\rm(i)]
		\item $x$ is a $\Delta$-point.
		\item For every slice $S$ of $B_X$ with $x \in S$ and $\epsilon > 0$, there exists $y \in S$ such that $\|x - y\| \geq 2 - \epsilon$.
		\item For every rank-one projection $P = x^* \otimes x$ with $x^*x = 1$, we have $\|I - P\| \geq 2$.
		\item For every rank-one real-projection $P = \text{Re}\,x^* \otimes x$ with $x^*x = 1$, we have $\|I - P\| \geq 2$.
		\item For every rank-one linear operator $T = \frac{x^*}{\|x^*\|} \otimes x$ with $x^*x = 1$, we have $\|I - T\| = 2$.
		\item For every rank-one real-linear operator $T = \frac{\text{Re}\,x^*}{\|x^*\|} \otimes x$ with $x^*x = 1$, we have $\|I - T\| = 2$.
		
	\end{enumerate}
\end{Theorem}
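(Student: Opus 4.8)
The plan is to establish the chain of equivalences by a cycle of implications, exploiting the fact that most of the content is already encoded in the real-scalar theorem (Theorem \ref{th:realdelta}) and that the complex structure enters only through the passage between a functional $x^*$ and its real part $\mathrm{Re}\,x^*$. First I would observe that (i) $\Leftrightarrow$ (ii) is purely a statement about the metric/convex structure of $B_X$ and is proved exactly as in the real case: the implication (ii) $\Rightarrow$ (i) follows by taking, for a given $\epsilon$, any slice $S(x^*,\delta)$ containing $x$ and noting $x$ lies in the closure of the convex hull of $S \cap \Delta_\epsilon(x)$ whenever $S$ is small enough, while (i) $\Rightarrow$ (ii) uses a separation argument — if some slice $S(x^*,\epsilon) \ni x$ contained no point of $\Delta_{\epsilon}(x)$, then $\mathrm{Re}\,x^*$ would separate $x$ from $\overline{conv}\,\Delta_{\epsilon'}(x)$ for $\epsilon'$ slightly smaller, contradicting that $x$ is a $\Delta$-point. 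The only genuine use of complex scalars here is that slices are defined with $\mathrm{Re}\,x^*$, so no modification is needed.

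Next I would handle the operator-theoretic reformulations. For (ii) $\Rightarrow$ (iii): given a rank-one projection $P = x^* \otimes x$ with $x^*x = 1$, note $\|x^*\| = 1$ (since $P$ is a norm-one... more precisely $\|x^*\|\ge x^*x=1$ and $\|I-P\|\ge \|(I-P)x\|$ is vacuous, so instead) — here I would use the standard computation $\|I - P\| \ge \sup\{\|y - x^*y\,x\| : y \in B_X\}$ and, for $y$ ranging over a small slice $S(x^*,\epsilon) \cap S_X$ containing $x$, pick $z \in S(x^*,\epsilon)\cap S_X$ with $\|x - z\| \ge 2 - \epsilon$ (possible by (ii)); then $\|(I-P)z\| = \|z - (x^*z)x\|$ with $\mathrm{Re}(x^*z)$ close to $1$, and a short estimate using $\|z - x\| \ge 2-\epsilon$ and $|x^*z|\le 1$ forces $\|(I-P)z\|\ge 2-c\epsilon$. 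Letting $\epsilon\to 0$ gives $\|I-P\|\ge 2$; the reverse $\le 2$ is automatic since $\|I-P\|\le 1+\|P\| = 2$. The implications (iii) $\Leftrightarrow$ (v) and (iv) $\Leftrightarrow$ (vi) are essentially bookkeeping: $T = \frac{x^*}{\|x^*\|}\otimes x$ differs from a projection only by the scalar $\frac{1}{\|x^*\|}$, and since $x^*x = 1$ forces $\|x^*\|\ge 1$, one checks $\|I - T\| \le \|I - P\|$ (or the appropriate one-sided bound) and that $\|I-T\|\le 2$ always holds, so the inequality $\|I-P\|\ge 2$ upgrades to the equality $\|I-T\| = 2$; conversely $\|I-T\|=2$ with $\|T\|\le 1$ is itself a Daugavet-type equation that pushes back to $\|x^*\|=1$ and hence to (iii) or (iv). I would also note (iii) $\Rightarrow$ (iv) is trivial because $\mathrm{Re}\,x^* \otimes x$, as a real-linear operator, has $\|I - \mathrm{Re}\,x^*\otimes x\| \ge \|I - x^*\otimes x\|$ is the wrong direction — rather, I would argue (iv) $\Rightarrow$ (iii) directly (a complex-linear projection is in particular a real-projection up to the obvious identification) or route both through (ii).

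The cleanest organization, and the one I would actually write, is the cycle (i) $\Rightarrow$ (ii) $\Rightarrow$ (iii) $\Rightarrow$ (v) $\Rightarrow$ (vi) $\Rightarrow$ (iv) $\Rightarrow$ (ii), closing back, together with the separate easy equivalence (i) $\Leftrightarrow$ (ii). The step I expect to be the main obstacle is the passage (iv) $\Rightarrow$ (ii) (equivalently (vi) $\Rightarrow$ something usable), i.e.\ recovering a \emph{complex} slice statement from information about a \emph{real}-projection $\mathrm{Re}\,x^*\otimes x$: here the subtlety is that $\|I - \mathrm{Re}\,x^*\otimes x\| \ge 2$ a priori only gives, for each $\epsilon$, a point $y \in B_X$ with $\|y - (\mathrm{Re}\,x^*y)\,x\|$ close to $2$, and one must argue that $\mathrm{Re}\,x^*y$ is then forced close to $1$ (not merely of modulus close to $1$, which is what the complex functional would give) so that $y$ genuinely lies in the slice $S(x^*,\epsilon)$. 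This is where the real part is doing essential work and where I would spend the most care; the trick, as in \cite{KMM}, is that multiplying $y$ by a unimodular scalar does not help a \emph{real}-linear operator, so the extremal $y$ must already have $\mathrm{Re}\,x^*y \to 1$. Once that is pinned down, everything else is the routine triangle-inequality estimates indicated above.
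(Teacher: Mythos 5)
Your overall architecture (a cycle of implications, Hahn--Banach separation for (i)~$\Leftrightarrow$~(ii), a rotation-by-unimodular-scalar trick for the real/complex passage) matches the paper in spirit, and you correctly flag the real-to-complex direction as the delicate point. But two of your individual steps have genuine gaps. First, your direct (ii)~$\Rightarrow$~(iii): the hypothesis in (iii) is only $x^*x=1$, so $\|x^*\|$ can be strictly larger than $1$. Then your estimate ``$|x^*z|\le 1$'' is false, and more seriously the sets $\{y\in B_X:\ \mathrm{Re}\,x^*y>1-\epsilon\}$ do \emph{not} shrink around $x$ as $\epsilon\to 0$ (as slices of the normalized functional $x^*/\|x^*\|$ their width parameter tends to $1-1/\|x^*\|>0$), so a point $z$ of that slice with $\|x-z\|\ge 2-\epsilon$ may have $|1-x^*z|$ of order $\sqrt{\|x^*\|^2-1}$, and the triangle inequality gives nothing. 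This is exactly why the paper does not prove (ii)~$\Rightarrow$~(iii) directly but routes through the \emph{normalized} operators: (ii)~$\Rightarrow$~(vi) is done with a convexity argument for $g(\lambda)=\bigl\|\lambda\frac{x}{\|x^*\|}-(1-\lambda)\frac{y}{\|y\|}\bigr\|$ evaluated at $0,\ \tfrac12,\ 1$, and the un-normalization (vi)~$\Rightarrow$~(iv) (and (v)~$\Rightarrow$~(iii)) uses convexity of $\varphi(\lambda)=\|I-\lambda T\|$ with $\varphi(0)=1$, $\varphi(1)=2$ to get $\varphi(\|x^*\|)\ge 2$. You would need some version of this normalization device; your related claim that $\|I-T\|=2$ with $\|T\|\le 1$ ``pushes back to $\|x^*\|=1$'' is simply false.

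Second, your assertion that (iv)~$\Rightarrow$~(iii) holds ``directly (a complex-linear projection is in particular a real-projection up to the obvious identification)'' is not right: $x^*\otimes x$ and $\mathrm{Re}\,x^*\otimes x$ are different operators, and comparing $\|I-P\|$ for the two requires the rotation argument you describe elsewhere --- choose $\gamma\in S_{\mathbb C}$ with $x^*(\gamma y)=|x^*y|=\mathrm{Re}\,x^*(\gamma y)$ in one direction, and bound $|\mathrm{Im}\,x^*y|\le\sqrt{1-(\mathrm{Re}\,x^*y)^2}$ in the other (this is the content of the paper's step (vi)~$\Rightarrow$~(v) and of Proposition~\ref{prop:propD}). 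So the passage is doable but is not an identification, and your proposed cycle (i)~$\Rightarrow$~(ii)~$\Rightarrow$~(iii)~$\Rightarrow$~(v)~$\Rightarrow$~(vi)~$\Rightarrow$~(iv)~$\Rightarrow$~(ii) would need both repairs (the normalization step inserted before any claim of the form $\|I-\cdot\|=2$, and the rotation estimates written out) before it closes.
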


\begin{proof}
	
	The implication (i) $\iff$ (ii) $\iff$ (iv) was established in \cite{IK, P}. Hence, we prove the implications (iii) $\implies$ (ii) $\implies$ (vi) $\implies$ (iv), and (vi) $\implies$ (v) $\implies$ (iii). We additionally provide a different approach to the proof of (i) $\iff$ (ii). 
		
	(i) $\implies$ (ii): Let $\epsilon > 0$ and $S$ be a slice containing $x \in S_X$. If $x$ is a $\Delta$-point on $X$, then $\overline{conv}\Delta_{\epsilon}(x) \cap S \neq \emptyset$. From the fact that the set $B_X \setminus S$ is a closed convex set, we see that $\Delta_{\epsilon}(x) \not\subset B_X \setminus S$. Hence, $S \cap \Delta_{\epsilon}(x) \neq \emptyset$ and so there exists $y \in S$ such that $\|x- y\| \geq 2 - \epsilon$.

	
	(ii) $\implies$ (i): 
	Suppose that $x \notin \overline{conv}\Delta_{\epsilon}(x)$ for some $\epsilon > 0$. Notice that a singleton $\{x\}$ is convex as well as $\overline{conv}\Delta_{\epsilon}(x)$. Moreover, the set $\{x\}$ is compact. So by Hahn-Banach separation theorem, there exist $x^* \in S_{X^*}$ and $\alpha > 0$ such that the slice $S = S(x^*, \alpha)$ contains $x$ and $S \cap \overline{conv}\Delta_{\epsilon}(x) = \emptyset$. Hence, we see that $S \cap \Delta_{\epsilon}(x) = \emptyset$, and so $\|x - y\| < 2 - \epsilon$ for every $y \in S$.

	(iii) $\implies$ (ii): Consider a slice $S(x^* ,\delta)$ containing $x$ and $\epsilon > 0$. Then there exist $\delta_1> 0$ such that $\frac{\sqrt{2\delta_1}}{1 - \delta_1} < \frac{\epsilon}{4}$ and a bounded linear functional $y^* \in S_{X^*}$ such that $x \in S(y^*, \delta_1) \subset S(x^*, \delta)$.
	
	Consider a rank-one projection $P: y \mapsto y^*y\frac{x}{y^*x}$. Then by the assumption (iii), for every $\beta < \frac{\epsilon}{2}$, there exists $y \in S_X$ such that
	\begin{equation}\label{eq:control}
	\| y - Py\| = \left\|y - y^*y\frac{x}{y^*x}\right\| = \left \|\gamma y - \text{Re}\,y^*(\gamma y)\frac{x}{y^*x}\right\| > 2 - \beta,
	\end{equation}
	where $\gamma \in S_\mathbb{C}$ such that $|y^*y| = \gamma y^*y = y^*(\gamma y) = \text{Re}\,y^*(\gamma y)$. 
	
	Moreover, for $\beta$ small enough we also see that $\gamma y \in S(y^*, \delta_1)$. Let $\tilde{y} = \gamma y$. Then by (\ref{eq:control}), we obtain 
	\begin{eqnarray*}
	\|\tilde{y}- x\| &=& \left\|\gamma y - \text{Re}\,y^*(\gamma y)\frac{x}{y^*x} + \text{Re}\,y^*(\gamma y)\frac{x}{y^*x} - x\right\|\\
	&\geq& \left\|\gamma y - \text{Re}\,y^*(\gamma y)\frac{x}{y^*x}\right\| - \left\|x - \text{Re}\,y^*(\gamma y)\frac{x}{y^*x} \right\|\\
	&>& 2 - \beta - \left|1 - \frac{\text{Re}\,y^*(\gamma y)}{y^*x}\right|.   
	\end{eqnarray*}
	Since $|y^*x| \geq \text{Re}\,y^*x > 1 - \delta_1$, we can see that
	\[
	\left|1 - \frac{\text{Re}\,y^*(\gamma y)}{y^*x}\right| = \frac{|y^*x - \text{Re}\,y^*(\gamma y)|}{|y^*x|}\leq \frac{|1 - y^*x| + (1 - \text{Re}\,y^*(\gamma y))}{1 - \delta_1}
	\]
	and 
	\[
	(\text{Im}\,y^*x)^2 = |y^*x|^2 - (\text{Re}\,y^*x)^2 < 1 - (1 - \delta_1)^2 = 2\delta_1 - \delta_1^2.
	\]
	Hence, we have
	\begin{eqnarray*}
	|1 - y^*x| = \sqrt{(1 - \text{Re}\,y^*x)^2 + (\text{Im}\,y^*x)^2} &<& \sqrt{(1 - \text{Re}\,y^*x)^2 + 2\delta_1 - \delta_1^2}\\
	&<& \sqrt{\delta_1^2 + 2\delta_1 - \delta_1^2} = \sqrt{2\delta_1}.
	\end{eqnarray*}
	These consequently show that
	\[
	\|\tilde{y} - x\| > 2 - \beta - \frac{2\sqrt{2\delta_1}}{1 - \delta_1} > 2 - \epsilon.
	\]

	(ii) $\implies$ (vi) : Consider a rank-one linear operator $T = \frac{\text{Re}\,x^*}{\|x^*\|} \otimes x$ , where $x^* \in X^*$ satisfies $x^*x = 1$. Let $0 < \epsilon < \frac{1}{10}$, $\alpha \in \left(0, \frac{\epsilon}{(1 - \epsilon)\|x^*\|}\right)$, and $S = \left\{y \in B_X : \frac{\text{Re}\,x^*y}{\|x^*\|} > \frac{1}{\|x^*\|} - \alpha\right\}$. Then by the assumption, there exists $y \in S$ such that $\|x - y\| \geq 2 - \epsilon$. Moreover, from the fact that $\frac{y}{\|y\|} \in S$ and $\frac{1}{\|y\|} \leq \frac{1}{1 - \epsilon}$, we have 
	\[
	\left\|x - \frac{y}{\|y\|}\right\| > \|x - y\| - \left(\frac{1}{\|y\|} - 1\right) \geq 2 - \epsilon - \frac{\epsilon}{1 - \epsilon} > 2 - \frac{2\epsilon}{1- \epsilon}.
	\]
	
	Now, define a convex function $g(\lambda) : = \left\|\lambda \frac{x}{\|x^*\|} - (1 - \lambda)\frac{y}{\|y\|}\right\|$ on $[0,1]$. Notice that $g(0) = 1$, $g(1) = \frac{1}{\|x^*\|}$, and 
	\begin{align*}
	g\left(\frac{1}{2}\right) &= \frac{1}{2} \left\| \frac{x}{\|x^*\|} - \frac{y}{\|y\|}\right\|= \frac{1}{2} \left\| \frac{x}{\|x^*\|} - x + x - \frac{y}{\|y\|}\right\|\\
	&	\ge \frac{1}{2} \left( \left\|x-\frac{y}{\|y\|}\right \| - \left(1-\frac{1}{\|x^*\|}\right) \right)  > \frac{1}{2} + \frac{1}{2\|x^*\|}-\frac{\epsilon}{1- \epsilon}.
	\end{align*} Since $g(\lambda)$ is convex, we see that, for every $\lambda \in [0, \frac{1}{2})$, \[\frac{g(1) - g(1/2)}{1/2} \geq \frac{g(1/2) - g(\lambda)}{1/2- \lambda} > \frac{g(1/2) - g(\lambda)}{1/2}.\]  Hence,
	\[
	\frac{1}{2\|x^*\|} - \frac{1}{2} + \frac{\epsilon}{1- \epsilon} > g(1) - g\left(\frac{1}{2}\right) > g\left(\frac{1}{2}\right) - g(\lambda) > \frac{1}{2} + \frac{1}{2\|x^*\|} - \frac{\epsilon}{1- \epsilon} - g(\lambda),
	\]
	and so $g(\lambda) > 1 - \frac{2\epsilon}{(1 - \epsilon)}$ for every $\lambda \in [0, \frac{1}{2})$. From the continuity of $g(\lambda)$,  $g(\lambda) \geq 1 - \frac{2\epsilon}{1 - \epsilon}$ for every $\lambda \in [0, \frac{1}{2}]$. Also, for $\lambda \in (\frac{1}{2}, 1]$, we have 
	\[
	\frac{g(0) - g(1/2)}{\lambda - 1/2} > \frac{g(0) - g(1/2)}{1/2} \geq \frac{g(1/2) - g(\lambda)}{\lambda - 1/2}.\] 
	Hence we see that 
	\[
	1 - 1 + \frac{2\epsilon}{1 - \epsilon} \geq g(0) - g\left(\frac{1}{2}\right) > g\left(\frac{1}{2}\right) - g(\lambda) \geq 1 - \frac{2\epsilon}{1 - \epsilon} - g(\lambda),
	\]
	and so $g(\lambda) > 1 - \frac{4\epsilon}{1 - \epsilon}$ for every $\lambda \in (\frac{1}{2}, 1]$. 

	Let us consider two cases. First, assume that $\frac{\text{Re}\,x^*\frac{y}{\|y\|}}{1 + \text{Re}\,x^*\frac{y}{\|y\|}} \leq \frac{1}{2}$. Since $g(\lambda) \geq 1 - \frac{2\epsilon}{(1 - \epsilon)}$ for every $\lambda \in [0,\frac{1}{2}]$, we obtain
	\begin{eqnarray*}
		\left\|(I - T)\left(\frac{y}{\|y\|}\right)\right\| &=& \left(1 + \text{Re}\,x^*\frac{y}{\|y\|}\right) \cdot g\left(\frac{\text{Re}\,x^*\frac{y}{\|y\|}}{1 + \text{Re}\,x^*\frac{y}{\|y\|}}\right)\\
		&\geq&\left(1 + \text{Re}\,x^*\frac{y}{\|y\|}\right) \left(1 - \frac{2\epsilon}{1 - \epsilon}\right)\\
		&>& (2 - \alpha\|x^*\|)\left(1 - \frac{2\epsilon}{1 - \epsilon}\right)\\
		&>& \left(2 - \frac{\epsilon}{1 - \epsilon}\right) \left(1 - \frac{2\epsilon}{1 - \epsilon}\right)\\
		&=& 2 - \frac{\epsilon}{1 - \epsilon} - \frac{4\epsilon}{1 - \epsilon} + 2\left(\frac{\epsilon}{1 - \epsilon}\right)^2\\
		&>& 2 - \frac{5\epsilon}{1 - \epsilon}.
	\end{eqnarray*}
    
	Now, assume that $\frac{\text{Re}\,x^*\frac{y}{\|y\|}}{1 + \text{Re}\,x^*\frac{y}{\|y\|}} > \frac{1}{2}$. Since $g(\lambda) > 1 - \frac{4\epsilon}{(1 - \epsilon)}$ for every $\lambda \in (\frac{1}{2},1]$, we have
	\begin{eqnarray*}
		\left\|(I - T)\left(\frac{y}{\|y\|}\right)\right\| &=& \left(1 + \text{Re}\,x^*\frac{y}{\|y\|}\right) \cdot g\left(\frac{\text{Re}\,x^*\frac{y}{\|y\|}}{1 + \text{Re}\,x^*\frac{y}{\|y\|}}\right)\\
		&>& \left(1 + \text{Re}\,x^*\frac{y}{\|y\|}\right)\left(1 - \frac{4\epsilon}{1 - \epsilon}\right)\\
		&>& (2 - \alpha\|x^*\|)\left(1 - \frac{4\epsilon}{1 - \epsilon}\right)\\
		&>& \left(2 - \frac{\epsilon}{1 - \epsilon}\right)\left(1 - \frac{4\epsilon}{1 - \epsilon}\right)\\
		&=& 2 - \frac{\epsilon}{1-\epsilon} - \frac{8\epsilon}{1 - \epsilon} + 4\left(\frac{\epsilon}{1-\epsilon}\right)^2\\
		&>& 2 - \frac{9\epsilon}{(1 - \epsilon)}.
	\end{eqnarray*}
	Therefore, from the fact that $\epsilon > 0$ is arbitrary in all cases, we have (ii) $\implies$ (vi).

(vi) $\implies$ (iv): Let $T = \frac{\text{Re}\,x^*}{\|x^*\|} \otimes x$, where $x^* \in X^*$ satisfies $x^*x = 1$. Define a function $\varphi(\lambda) = \|I - \lambda T\|$, where $\lambda \in [0, \infty)$. It is easy to show that $\varphi$ is a convex function on $[0, \infty)$. Also, $\varphi(0) = 1$ and $\varphi(1) = \|I - T\| =2$ from the assumption. Furthermore,
	\[
	0 < \frac{\varphi(1) - \varphi(0)}{1-0} \leq \frac{\varphi(s) - \varphi(1)}{s-1}\,\,\, \text{for all} \,\,\, s \geq 1.
	\]
	So, for every $s \geq 1$, we see that $\varphi(s) > \varphi(1) = 2$. In particular, if $s = \|x^*\| \geq 1$, we obtain $\varphi(\|x^*\|) = \|I - \|x^*\|\cdot T\| = \|I - P\| \geq 2$. Therefore, the implication (vi) $\implies$ (iv) holds. 
	
	(vi) $\implies$ (v) : Let $T = \frac{x^*}{\|x^*\|} \otimes x$, where $x^* \in X^*$ satisfies $x^*x = 1$. By the assumption, for every $\epsilon > 0$, there exists $y \in S_X$ such that $\left\|y - \frac{\text{Re}\,x^*y}{\|x^*\|}x\right\| \geq 2 - \frac{\epsilon^2}{2}$. Notice that $\left| \frac{\text{Re}\,x^*y}{\|x^*\|}\right| \geq 1 - \frac{\epsilon^2}{2}$ and so $\left|\frac{\text{Im}\, x^*y}{\|x^*\|}\right| \leq \epsilon$. 
	
	Then we have
	\begin{eqnarray*}
		\left\|y - \frac{x^*y}{\|x^*\|}x\right\| \geq \left\|y - \frac{\text{Re}\, x^*y}{\|x^*\|}x\right\|  - \left\|\frac{x^*y}{\|x^*\|}x - \frac{\text{Re}\, x^*y}{\|x^*\|}x\right\| &\geq& 2 - \frac{\epsilon^2}{2} - \left|\frac{\text{Im}\, x^*y}{\|x^*\|}\right|\\
		&\geq& 2 - \frac{\epsilon^2}{2} - \epsilon. 	
	\end{eqnarray*}
	This shows (vi) $\implies$ (v). The proof of (v) $\implies$ (iii) is identical to (vi) $\implies$ (iv). 
\end{proof}

Hence we can verify the relationship between the $\Delta$-points and the DLD2P as well as the spaces with bad projections \cite{IK} for complex Banach spaces.

\begin{corollary}
	Let $X$ be a complex Banach space. The following statements are equivalent:
	\begin{enumerate}[\rm(i)]
		\item The space $X$ has the DLD2P.
		\item Every point on the unit sphere $S_X$ is a $\Delta$-point.
		\item Every rank-one projection $P$ has $\|I - P\| \geq 2$, i.e. $X$ is a space with bad projections.
		\item Every rank-one real-projection $P$ has $\|I - P\| \geq 2$.
	\end{enumerate}
\end{corollary}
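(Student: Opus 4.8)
The plan is to read off this Corollary from Theorem~\ref{prop:deltaequiv} by letting the pointwise statement there range over the whole sphere $S_X$ and then matching the resulting quantified conditions with the relevant global properties of $X$. No new estimate should be needed, since all the analytic content already lives inside Theorem~\ref{prop:deltaequiv}.

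For the equivalence (i) $\iff$ (ii): unwinding the definition of the DLD2P, $X$ has it exactly when, for every slice $S$ of $B_X$, every $x \in S \cap S_X$ and every $\epsilon > 0$, there is $y \in S$ with $\|x-y\| \ge 2-\epsilon$. Swapping the order of the quantifiers ``for every slice'' and ``for every $x$'', this is the same as saying that every $x \in S_X$ satisfies condition (ii) of Theorem~\ref{prop:deltaequiv} — and each $x \in S_X$ does lie in at least one slice, e.g.\ $S(x^*,\delta)$ for a norming functional $x^* \in S_{X^*}$, so no case is vacuous. By the equivalence (i) $\iff$ (ii) of Theorem~\ref{prop:deltaequiv}, this holds if and only if every $x \in S_X$ is a $\Delta$-point, which is (ii).

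For (ii) $\iff$ (iii) and (ii) $\iff$ (iv) the point is a normalization. A rank-one projection can be written as $x^* \otimes x$ with $x^*x = 1$, and rescaling $x \mapsto x/\|x\|$, $x^* \mapsto \|x\|\,x^*$ puts it in the normalized form with $x \in S_X$ and $x^*x = 1$; conversely every operator $x^* \otimes x$ with $x \in S_X$ and $x^*x = 1$ is a rank-one projection. Hence statement (iii) of the Corollary is literally the assertion that condition (iii) of Theorem~\ref{prop:deltaequiv} holds for \emph{every} $x \in S_X$, which by that theorem is equivalent to every $x \in S_X$ being a $\Delta$-point. The equivalence (ii) $\iff$ (iv) is obtained the same way, using condition (iv) of Theorem~\ref{prop:deltaequiv} and the fact that, after the analogous rescaling, every rank-one real-projection has the form $\text{Re}\,x^* \otimes x$ with $x \in S_X$ and $x^*x = 1$.

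The one spot that needs a careful look is precisely this last matching in the real-projection case: one has to confirm that a rank-one real-projection, after rescaling its range vector to the unit sphere, is of the form $\text{Re}\,x^* \otimes x$ with $x^*x = 1$ — with $x^*x$ genuinely real and equal to $1$, not merely $\text{Re}(x^*x) = 1$ — so that Theorem~\ref{prop:deltaequiv}(iv) applies verbatim. Once the representations are confirmed to line up, everything else is routine quantifier bookkeeping, and the chain (i) $\iff$ (ii) $\iff$ (iii), (ii) $\iff$ (iv) closes.
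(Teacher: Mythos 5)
Your proposal matches the paper exactly: the corollary appears there with no separate proof, as an immediate consequence of Theorem~\ref{prop:deltaequiv} obtained by letting $x$ range over $S_X$, which is precisely your quantifier bookkeeping, and the normalization for the complex rank-one projections in (iii) works just as you describe. On the one point you left open: a real-linear rank-one projection onto the real line $\mathbb{R}x$ has the form $(\mathrm{Re}\,x^*)\otimes x$ with only $\mathrm{Re}(x^*x)=1$ forced (the computation $P^2=P$ yields $\mathrm{Re}\,x^*(x)=1$, and rescaling the range vector by a \emph{real} scalar cannot remove a nonzero $\mathrm{Im}(x^*x)$), so item (iv) must be read with the same normalization $x^*x=1$ as in Theorem~\ref{prop:deltaequiv}(iv); under that reading the rescaling $x\mapsto x/\|x\|$, $x^*\mapsto\|x\|\,x^*$ preserves both the operator and the condition $x^*x=1$, the statements line up verbatim, and the paper tacitly adopts this reading since it supplies no additional argument for a wider class of real projections.
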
 

The following statement about Daugavet points is a compilation of well-known results, but we include them for completeness.

\begin{Theorem}
	Let $X$ be a complex Banach space and $x \in S_X$. Then the following statements are equivalent.
	\begin{enumerate}[\rm(i)]
		\item $x$ is a Daugavet point.
		\item For every slice $S$ of $B_X$ and $\epsilon > 0$, there exists $y \in S$ such that $\|x - y\| \geq 2 - \epsilon$.
		\item Every rank-one operator $T = x^* \otimes x$ satisfies $\|I - T\| = 1 + \|T\|$.
		\item Every rank-one operator $T = x^* \otimes x$ with norm-one satisfies $\|I - T\| = 2$.
		\item Every rank-one real-linear operator $T = \text{Re}\,x^* \otimes x$ satisfies $\|I - T\| = 1 + \|T\|$.
		\item Every rank-one real-linear operator $T = \text{Re}\, x^* \otimes x$ with norm-one satisfies $\|I - T\| = 2$.  
	\end{enumerate}
\end{Theorem}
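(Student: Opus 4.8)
The plan is to run two cycles of implications, (i)$\Leftrightarrow$(ii)$\Rightarrow$(iii)$\Rightarrow$(iv)$\Rightarrow$(ii) and (ii)$\Rightarrow$(v)$\Rightarrow$(vi)$\Rightarrow$(ii), taking advantage of the fact that the Daugavet-point situation is more relaxed than the $\Delta$-point situation of Theorem~\ref{prop:deltaequiv}: the slices appearing in (ii) are arbitrary (they need not contain $x$), and the rank-one operators in (iii)--(vi) carry no normalization such as $x^*x=1$. Consequently neither the sub-slice Lemma~\ref{lem:subslice} nor the delicate convexity argument of the (ii)$\Rightarrow$(vi) step of Theorem~\ref{prop:deltaequiv} is needed. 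For (i)$\Leftrightarrow$(ii) I would use the elementary observation that a closed convex set $C\subseteq B_X$ equals $B_X$ if and only if every slice of $B_X$ meets $C$: one direction is immediate since $B_X\setminus S$ is closed and convex for every slice $S$; for the other, given $x_0\in B_X\setminus C$ one separates $\{x_0\}$ from $C$ by some $x^*\in S_{X^*}$, and because $\text{Re}\,x^*(x_0)\le 1$ the separating level lies strictly below $1$, hence determines a genuine slice through $x_0$ missing $C$. Applying this with $C=\overline{conv}\,\Delta_\epsilon(x)$ for each $\epsilon>0$ yields (i)$\Leftrightarrow$(ii).

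Next, (ii)$\Rightarrow$(iii). For $T=x^*\otimes x$ with $0<\|x^*\|\le 1$ (we may take $\|x\|=1$), apply (ii) to the slice $S(x^*/\|x^*\|,\delta)$: there is $y\in S_X$ with $\text{Re}\,x^*(y)>\|x^*\|(1-\delta)$ and $\|x-y\|\ge 2-\delta$. Testing $I-T$ at $-y$ gives $(I-T)(-y)=x^*(y)x-y$, and discarding the imaginary part --- which is $O(\sqrt{\delta})$ since $\text{Re}\,x^*(y)$ is within $O(\delta)$ of $\|x^*\|$ --- together with $\|\text{Re}(x^*(y))x-y\|\ge\|x-y\|-(1-\text{Re}\,x^*(y))\ge 1+\|x^*\|-O(\delta)$ gives $\|I-T\|\ge 1+\|x^*\|$; the reverse inequality is automatic, so $\|I-T\|=1+\|T\|$. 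For $\|x^*\|>1$ I would derive this from the norm-one case via the convexity of $\lambda\mapsto\|I-\lambda T_0\|$ with $T_0=(x^*/\|x^*\|)\otimes x$, precisely as in the (vi)$\Rightarrow$(iv) part of Theorem~\ref{prop:deltaequiv}. The implication (ii)$\Rightarrow$(v) is the same computation with $\text{Re}\,x^*$ in place of $x^*$ (now there is no imaginary part to discard), and (iii)$\Rightarrow$(iv), (v)$\Rightarrow$(vi) are immediate specializations to norm-one operators.

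For the two closing implications, fix a slice $S=S(x^*,\delta)$ with $\|x^*\|=1$ and an $\epsilon>0$, and let $\eta\in(0,\epsilon/2)$. Assuming (iv), $\|I-x^*\otimes x\|=2$, so some $z_0\in B_X$ satisfies $\|z_0-x^*(z_0)x\|>2-\eta$; then $\|z_0\|>1-\eta$, and after replacing $z_0$ by $z_0/\|z_0\|$ --- which only increases the quantity $\|z-x^*(z)x\|$ --- we obtain $z\in S_X$ with $|x^*(z)|>1-\eta$. Rotating, $y:=\omega z\in S_X$ with $\omega=\overline{x^*(z)}/|x^*(z)|$ has $x^*(y)=|x^*(z)|\in(1-\eta,1]$, so $y\in S$, while $\|y-|x^*(z)|x\|=\|z-x^*(z)x\|>2-\eta$ forces $\|x-y\|>2-2\eta\ge 2-\epsilon$. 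Assuming instead (vi), the same plan applies with $\text{Re}\,x^*$; the one new wrinkle is that $\text{Re}\,x^*(z)$ may be near $1$ or near $-1$, so in the first case $z$ (normalized) lies in $S$ and in the second $-z$ does, and either way $\|z-\text{Re}(x^*(z))x\|>2-\eta$ delivers a point of $S$ at distance exceeding $2-2\eta$ from $x$. With (i)$\Leftrightarrow$(ii), both cycles close and all six statements are equivalent.

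The step I expect to require the most care is the return from the operator conditions to (ii), i.e. (iv)$\Rightarrow$(ii) and (vi)$\Rightarrow$(ii): one must rotate --- and, in the real-linear case, possibly reflect --- the near-extremal vector produced by $\|I-T\|=2$ so that it simultaneously falls into the prescribed slice $S$ and still realizes the inequality $\|x-y\|\ge 2-\epsilon$, and one must check that normalizing it does not damage the estimate (it does not, since $\|z_0\|\le 1$). The rest is routine bookkeeping of the $\delta,\eta,\sqrt{\delta}$ error terms in (ii)$\Rightarrow$(iii) and (ii)$\Rightarrow$(v), together with a verbatim reuse of the convexity argument already recorded for $\Delta$-points.
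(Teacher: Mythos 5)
Your proposal is correct, and on the two implications the paper actually works out in detail --- the separation argument for (i)$\Leftrightarrow$(ii) and the rotation trick ($y\mapsto\gamma y$ with $\gamma=\overline{x^*(z)}/|x^*(z)|$) for passing between (iv) and (ii) --- it coincides with the paper's proof almost line for line. Where you differ is in the shape of the implication graph and in how much you delegate to the literature: the paper proves (ii)$\Leftrightarrow$(iv) directly, obtains (iv)$\Rightarrow$(iii) by citing the known fact that norm-one rank-one operators suffice for the Daugavet equation, and cites \cite{AHLP} wholesale for (ii)$\Leftrightarrow$(v)$\Leftrightarrow$(vi); you instead prove (ii)$\Rightarrow$(iii) and (ii)$\Rightarrow$(v) by a direct slice computation for $\|x^*\|\le 1$ plus the convexity of $\lambda\mapsto\|I-\lambda T_0\|$ for $\|x^*\|>1$, and you close the real-linear cycle by hand, handling the sign ambiguity of $\operatorname{Re}x^*(z)$ by passing to $-z$ when necessary. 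This buys a self-contained proof (no external references beyond the convexity lemma already used for $\Delta$-points) at the cost of some extra $O(\sqrt{\delta})$ bookkeeping for the discarded imaginary parts; the estimates you sketch ($|\operatorname{Im}x^*(y)|\le\|x^*\|\sqrt{2\delta-\delta^2}$, and $\|z_0\|\le 1$ so normalizing only increases $\|z-x^*(z)x\|$) are all valid. The only point to make explicit when writing it up is that in the closing implications $\eta$ must be taken smaller than $\min\{\delta,\epsilon/2\}$ so that the rotated vector actually lands in the prescribed slice $S(x^*,\delta)$; this is exactly the adjustment the paper makes by working with the slice of width $\epsilon/2$.
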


\begin{proof}
	One may see the proof for the real case in \cite[Proposition 1.4.5]{P}.
	
	(i) $\implies$ (ii):  Let $S$ be a slice of $B_X$ and $\epsilon > 0$. If $x$ is a Daugavet point, then $S \subset \overline{\text{conv}} \Delta_{\epsilon}(x) = B_X$. By using a similar reasoning as (i) $\implies$ (ii) in Theorem \ref{prop:deltaequiv}, we see that $S \cap \Delta_{\epsilon}(x) \neq \emptyset$. Therefore, there exists $y \in S$ such that $\|x - y\| \geq 2 - \epsilon$.

	
	(ii) $\implies$ (i): Suppose that $B_X \neq \overline{conv}\Delta_{\epsilon}(x)$ for some $\epsilon > 0$. A singleton $\{x\}$ is compact and convex, and the set $\overline{conv}\Delta_{\epsilon}(x)$ is also convex. So by a similar argument as (ii) $\implies$ (i) in Theorem \ref{prop:deltaequiv}, there exists a slice $S$ such that $S \cap \Delta_{\epsilon}(x) = \emptyset$. Therefore, for every $y \in S$, we have $\|x - y\| < 2 - \epsilon$.

			
	(iii) $\implies$ (iv) is clear, and (iv) $\implies$ (iii) comes from the fact that for Daugavet property considering rank-one operators with norm-one is enough \cite{W}. The equivalence (ii) $\iff$ (v) $\iff$ (vi) is a well-known result in \cite{AHLP}.
	
	(iv) $\implies$ (ii): Let $x^* \in S_{X^*}$, $\epsilon > 0$, and $S = \{y \in B_X: \text{Re}\,x^*y > 1 - \frac{\epsilon}{2}\}$. Let $T \in L(X)$ be the rank-one operator with norm one defined by $T(y) = x^*y\cdot x$. By the assumption (iv), there exists $y \in S_X$ such that $\|y - Ty\| = \|y - x^*y\cdot x\| > 2 -  \frac{\epsilon}{2}$. Notice that $|x^*y| = \text{Re}\,x^*(\gamma y)$ for some $\gamma \in S_{\mathbb{C}}$, and so
	\[
	1 + \text{Re}\,x^*(\gamma y) \geq \|y - x^*y \cdot x\| > 2 - \frac{\epsilon}{2}.
	\]
	Hence $\gamma y \in S$. Let $\tilde{y} = \gamma y$. Then we have
	\[
	\|\tilde{y} - x\| \geq \|\tilde{y} - \text{Re}\,x^*(\gamma y) \cdot x\| - 1 + \text{Re}\,x^*(\gamma y) > 2 - \epsilon.
	\]
	Therefore, we see that (ii) holds.
	
	(ii) $\implies$ (iv): Consider $T = x^* \otimes x$ where $\|x^*\| = 1$. Let $\epsilon > 0$ and $S = \{y \in B_X: \text{Re}\,x^*y > 1 - \frac{\epsilon}{2}\}$ be a slice of $B_X$. By the assumption (ii), there exists $y \in S$ such that $\|x - y\| > 2 - \frac{\epsilon}{2}$. Notice that $1 \geq |x^*y|^2 = (\text{Re}\,x^*y)^2 + (\text{Im}\,x^*y)^2$. Hence we have  
	\[
	|1- x^*y| = \sqrt{(1 - \text{Re}\,x^*y)^2 + (\text{Im}\,x^*y)^2} < \sqrt{\frac{\epsilon^2}{4} + 1 - \left(1 - \frac{\epsilon}{2}\right)^2} = \sqrt{\epsilon}.  
	\]
	Moreover,
	\begin{eqnarray*}
	\|(I - T)y\| = \|y - x^*yx\| \geq \|y - x\| - \|x - x^*y \cdot x\| &>& 2 - \frac{\epsilon}{2} - |1 - x^*y|\\
	&>& 2 - \frac{\epsilon}{2}  - \sqrt{\epsilon} > 2 - \epsilon.  
	\end{eqnarray*}
	Since $\epsilon > 0$ is arbitrary, we obtain $\|I - T\| \geq 2$. Then (iv) holds immediately from the fact that $\|I - T\| \leq 1 + \|T\| = 2$.
\end{proof}

\begin{Corollary}
	Let $X$ be a complex Banach space. Then the following statements are equivalent:
	\begin{enumerate}[\rm(i)]
		\item The space $X$ has the Daugavet property.
		\item Every point on the unit sphere $S_X$ is a Daugavet point.
	\end{enumerate}
\end{Corollary}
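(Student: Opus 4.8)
The plan is to read the corollary off the preceding Theorem (the characterisation of Daugavet points), with essentially no extra work; the only point requiring care is a sign bookkeeping between $T$ and $-T$. I would first record the trivial observation that, for a fixed $x\in S_X$, the family $\{x^*\otimes x : x^*\in X^*\}$ of rank-one operators is invariant under $T\mapsto -T$ (replace $x^*$ by $-x^*$) and that $\|(-x^*)\otimes x\|=\|x^*\otimes x\|$. Consequently condition (iii) of the preceding Theorem, namely ``$\|I-T\|=1+\|T\|$ for every $T=x^*\otimes x$'', is equivalent to ``$\|I+T\|=1+\|T\|$ for every $T=x^*\otimes x$'', which is the form in which the Daugavet equation appears in the definition of the Daugavet property.

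For the implication (i) $\Rightarrow$ (ii), I would fix $x\in S_X$ and, for an arbitrary $x^*\in X^*$, apply the Daugavet equation to the rank-one operator $(-x^*)\otimes x$; since $I+\bigl((-x^*)\otimes x\bigr)=I-(x^*\otimes x)$ and $\|(-x^*)\otimes x\|=\|x^*\otimes x\|$, this yields $\|I-(x^*\otimes x)\|=1+\|x^*\otimes x\|$. As $x^*$ was arbitrary, $x$ satisfies condition (iii) of the preceding Theorem and is therefore a Daugavet point; and $x\in S_X$ was arbitrary. For (ii) $\Rightarrow$ (i), I would take an arbitrary rank-one operator $T\colon X\to X$; the case $T=0$ is trivial, and otherwise I would write $T=\phi\otimes x$ with $x\in S_X$ and $\|\phi\|=\|\phi\otimes x\|=\|T\|$. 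Since $x$ is a Daugavet point, condition (iii) of the preceding Theorem applies to the functional $-\phi$ and gives $\|I-\bigl((-\phi)\otimes x\bigr)\|=1+\|(-\phi)\otimes x\|$, i.e. $\|I+T\|=1+\|T\|$. Hence every rank-one operator satisfies the Daugavet equation, so $X$ has the Daugavet property.

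I do not expect any genuine obstacle here: the statement is a direct corollary of the preceding Theorem, the only subtlety being the harmless passage between $x^*\otimes x$ and $(-x^*)\otimes x$. One could instead route the argument through \Cref{lem:Daug} together with condition (ii) of the preceding Theorem, but then the reverse implication would require normalising a point of $B_X$ returned by the Daugavet-point slice condition to a point of $S_X$ still lying in the prescribed slice — a minor but avoidable nuisance compared with the operator-theoretic route sketched above, which is why I would prefer the latter.
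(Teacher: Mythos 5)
Your proof is correct and is exactly the argument the paper intends: the paper states this corollary without proof as an immediate consequence of the preceding theorem, and your reduction via condition (iii) together with the sign swap $x^*\mapsto -x^*$ (plus normalising $x$ to $S_X$ in the converse direction) is the standard way to make that deduction explicit.
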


Now, we make a similar observation on rank-one, norm-one projections.

\begin{Proposition}\label{prop:propD}
	Let $X$ be a complex Banach space and suppose that there are $x^*\in X^*$ and $x\in X$ such that  $x^*x=1=\|x^*\|=\|x\|$. Then the rank-one projection $P = x^* \otimes x$ on $X$ satisfies $\|I - P\| = 2$ if and only if the rank-one real-projection $P' = \text{Re}\,x^*\otimes x$ on $X$ satisfies $\|I - P'\| = 2$.
\end{Proposition}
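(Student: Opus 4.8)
The plan is to exploit the single structural fact behind this equivalence: although $P' = \text{Re}\,x^*\otimes x$ is only $\mathbb{R}$-linear while $P = x^*\otimes x$ is $\mathbb{C}$-linear, the two operators \emph{coincide} on any vector $y$ with $x^*y$ a nonnegative real, and every unit vector can be multiplied by a suitable unimodular scalar to be put in that form. I would first record the trivial bound: since $x^*x = 1 = \|x^*\| = \|x\|$, both $P$ and $P'$ have norm one, so $\|I-P\| \le 2$ and $\|I-P'\| \le 2$; hence in each implication it suffices to prove the reverse inequality $\ge 2$.

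For the implication $\|I-P'\| = 2 \implies \|I-P\| = 2$, fix $\epsilon > 0$ and pick $y \in S_X$ with $\|y - \text{Re}(x^*y)\,x\| > 2-\epsilon$. From $\|y - \text{Re}(x^*y)x\| \le 1 + |\text{Re}(x^*y)|$ we get $|\text{Re}(x^*y)| > 1-\epsilon$, and replacing $y$ by $-y$ if necessary (which leaves $\|(I-P')y\|$ unchanged) we may assume $\text{Re}(x^*y) > 1-\epsilon$. Then $|\text{Im}(x^*y)|^2 = |x^*y|^2 - (\text{Re}\,x^*y)^2 \le 1 - (1-\epsilon)^2 < 2\epsilon$, so
\[
\|(I-P)y\| = \|y - x^*y\cdot x\| \ge \|y - \text{Re}(x^*y)x\| - |\text{Im}(x^*y)|\,\|x\| > 2 - \epsilon - \sqrt{2\epsilon}.
\]
Letting $\epsilon \to 0$ yields $\|I-P\| \ge 2$, hence $\|I-P\| = 2$.

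For the converse $\|I-P\| = 2 \implies \|I-P'\| = 2$, fix $\epsilon > 0$, choose $y \in S_X$ with $\|y - x^*y\cdot x\| > 2-\epsilon$, and pick a unimodular $\gamma \in S_{\mathbb{C}}$ with $\gamma\,x^*y = |x^*y|$. Set $\tilde y = \gamma y \in S_X$. Then $x^*\tilde y = |x^*y| \ge 0$ is real, so $P'(\tilde y) = \text{Re}(x^*\tilde y)\,x = |x^*y|\,x = (x^*\tilde y)\,x = P(\tilde y)$; using the $\mathbb{C}$-linearity of $I-P$,
\[
\|(I-P')\tilde y\| = \|(I-P)\tilde y\| = \|\gamma(I-P)y\| = \|(I-P)y\| > 2-\epsilon .
\]
Since $\tilde y \in B_X$ and $\epsilon$ is arbitrary, $\|I-P'\| \ge 2$, completing the proof.

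I do not expect a real obstacle here; the argument is short and of the same flavour as the rotation and ``small imaginary part'' estimates already used in the proof of Theorem~\ref{prop:deltaequiv}. The only points needing a little care are the sign bookkeeping in the first implication (one must flip $y$ to make $\text{Re}(x^*y)$ positive before bounding $\text{Im}(x^*y)$) and verifying that the unimodular rotation genuinely forces $P$ and $P'$ to agree on $\tilde y$ in the second. Note also that one cannot simply cite Theorem~\ref{prop:deltaequiv}: there $x$ is a $\Delta$-point and the statements quantify over all rank-one projections onto $x$, whereas here $x$ and $x^*$ are fixed, so the argument must be carried out directly as above.
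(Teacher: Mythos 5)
Your proof is correct and follows essentially the same route as the paper: one direction rotates the near-maximizing vector by a unimodular $\gamma$ so that $P$ and $P'$ agree on it, and the other bounds $|\mathrm{Im}(x^*y)|$ via $|x^*y|^2-(\mathrm{Re}\,x^*y)^2\le 1-(1-\epsilon)^2$. The only cosmetic difference is your sign-flip of $y$ in the first implication, which is harmless but not actually needed since the imaginary-part estimate only uses $|\mathrm{Re}(x^*y)|$ being close to $1$.
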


\begin{proof}
	Let $P' = \text{Re}\,x^* \otimes x$. Then for every $\epsilon >0$, there exists $y \in S_X$ such that $\|y - x^*y\cdot x\| \geq 2 - \epsilon$. Now take $\gamma \in S_{\mathbb{C}}$ such that $|x^*y| = \gamma x^*y$. This implies that $x^*(\gamma y) = \text{Re}\, x^*(\gamma y)$. Hence we see that 
	\[
	\|I - P\| \geq \|\gamma y  - \text{Re}\,x^*(\gamma y)\cdot x\| = \|\gamma( y  - x^*y\cdot x)\| = \|y  - x^*y\cdot x\| \geq 2 -\epsilon.  
	\]  
	Since $\epsilon > 0$ is arbitrary, we obtain $\|I - P\| = 2$.
	
	Let $P = x^* \otimes x$. Then we see that for every $\epsilon > 0$, there exists $y \in S_X$ such that $\|y - \text{Re}\,x^*y\cdot x\| \geq 2 - \frac{\epsilon}{2}$. Then we have $|\text{Re}\,x^*y| \geq 1- \frac{\epsilon}{2}$. Notice that
	\[
	(\text{Im}\,x^*y)^2 = |x^*y|^2 - (\text{Re}\,x^*y)^2 \leq 1 - \left(1 - \frac{\epsilon}{2}\right)^2 < \epsilon.
	\]
	Hence, we obtain
	\[
	\|I - P\| \geq \|y - x^*y\cdot x\| \geq \|y - \text{Re}\,x^*y \cdot x\| - |\text{Im}\,x^*y| > 2 - 2\epsilon.
	\]
	Therefore, since $\epsilon > 0$ is arbitrary, we obtain $\|I - P\| = 2$.
\end{proof}

\section{Alternative convexity or smoothness, nonsquareness, and the Daugavet property}

In this section, we study the relationship between alternative convexity or smoothness, nonsquareness, and the Daugavet property. First, we recall various nonsquareness properties, in the sense of James \cite{J,WSL}. The uniform nonsquareness has been examined on both real and complex Banach spaces via Jordan-von Neumann constants \cite{KMT}. 

\begin{definition} 
	\begin{enumerate}[\rm(i)]
		\item A Banach space $X$ is uniformly nonsquare (UNSQ) if there exists $\delta > 0$ such that for every $x, y \in S_X$, $\min\{\|x\pm y\|\} \leq 2 - \delta$.
		\item A Banach space $X$ is locally uniformly nonsquare (LUNSQ) if for every $x \in S_X$, there exists $\delta > 0$ such that $\min\{\|x \pm y\|\} \leq 2 - \delta$ for every $y \in S_X$.
		\item A Banach space $X$ is nonsquare (NSQ) if for every $x, y \in S_X$, $\min\{\|x \pm y\| \} < 2$.
	\end{enumerate}
\end{definition}
\noindent Here we call each point $x \in S_X$ in (ii) a {\it locally uniformly nonsquare point (or uniformly non-$\ell_1^2$ point)}. We have the following implication for these classes:
\[
\text{UNSQ}\,\,\, \implies\,\,\, \text{LUNSQ}\,\,\, \implies\,\,\, \text{NSQ}. 
\]

It has been recently shown that UNSQ real Banach spaces do not have $\Delta$-points \cite{ALMP} at all. We extend this result for the class of LUNSQ Banach spaces on the complex scalar field. Let us start with an improvement of Theorem \ref{prop:deltaequiv}. For the proof on the real case, we refer to \cite[Lemma 2.1]{JR}.

\begin{Lemma}\label{lem:delta}
	Let $X$ be a complex Banach space and $x\in S_X$ be a $\Delta$-point. For every $\epsilon > 0$, $\frac{\alpha}{1 - \alpha} < \epsilon$, and every slice $S = S(x^*, \alpha)$ containing $x$, there exists a slice $S(z^*, \alpha_1)$ of $B_X$ such that $S(z^*, \alpha_1) \subset S(x^*, \alpha)$ and $\|x - y\| > 2 - \epsilon$ for all $y \in S(z^*, \alpha_1)$. 
\end{Lemma}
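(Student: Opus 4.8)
The plan is to combine the passage‑to‑subslice lemma (Lemma~\ref{lem:subslice}) with the slice characterization of $\Delta$‑points from Theorem~\ref{prop:deltaequiv}(ii) and a single Hahn--Banach functional. The thin, uniformly far subslice will be cut out by the functional $z^{*}$ obtained by normalizing $x_{1}^{*}+\zeta^{*}$, where $x_{1}^{*}$ comes from first shrinking $S(x^{*},\alpha)$ and $\zeta^{*}$ norms $y-x$ for a point $y$ that is nearly antipodal to $x$.

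Concretely, I would first apply Lemma~\ref{lem:subslice} to pass to a genuinely smaller subslice: fix $\alpha_{1}\in(0,\alpha)$ and get $x_{1}^{*}\in S_{X^{*}}$ with $x\in S(x_{1}^{*},\alpha_{1})\subseteq S(x^{*},\alpha)$. Put $r_{1}:=\text{Re}\,x_{1}^{*}x\;(>1-\alpha_{1})$ and set the intermediate radius $\beta:=\tfrac12\big((1-r_{1})+\alpha_{1}\big)$, so that $1-r_{1}<\beta<\alpha_{1}$; thus $x\in S(x_{1}^{*},\beta)$ and the buffer $\theta:=\alpha_{1}-\beta$ is a fixed positive number. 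Since $x$ is a $\Delta$‑point, Theorem~\ref{prop:deltaequiv}(ii) applied to the slice $S(x_{1}^{*},\beta)$ yields, for a parameter $\epsilon_{0}>0$ to be fixed small at the very end, a point $y\in S(x_{1}^{*},\beta)$ with $\|x-y\|\ge 2-\epsilon_{0}$; by Hahn--Banach choose $\zeta^{*}\in S_{X^{*}}$ with $\zeta^{*}(y-x)=\|y-x\|$, whence $\text{Re}\,\zeta^{*}y\ge 1-\epsilon_{0}$ and $\text{Re}\,\zeta^{*}x\le -1+\epsilon_{0}$ (using $\|y\|\le 1$). I then set $v^{*}:=x_{1}^{*}+\zeta^{*}$, $z^{*}:=v^{*}/\|v^{*}\|$, and claim that $S(z^{*},\gamma)$ works for $\gamma>0$ small.

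For the verification, note $\|v^{*}\|\ge\text{Re}\,v^{*}y=\text{Re}\,x_{1}^{*}y+\text{Re}\,\zeta^{*}y>(1-\beta)+(1-\epsilon_{0})$. If $w\in S(z^{*},\gamma)$, then $\text{Re}\,v^{*}w>(1-\gamma)\|v^{*}\|\ge(1-\gamma)(2-\beta-\epsilon_{0})$, so bounding $\text{Re}\,\zeta^{*}w\le1$ (resp.\ $\text{Re}\,x_{1}^{*}w\le1$) gives
\[
\text{Re}\,x_{1}^{*}w>1-(\beta+\epsilon_{0}+2\gamma)\qquad\text{and}\qquad\text{Re}\,\zeta^{*}w>1-(\beta+\epsilon_{0}+2\gamma).
\]
The first inequality forces $w\in S(x_{1}^{*},\alpha_{1})\subseteq S(x^{*},\alpha)$ as soon as $\epsilon_{0}+2\gamma<\theta$; the second, together with $\text{Re}\,\zeta^{*}x\le-1+\epsilon_{0}$, gives $\|x-w\|\ge\text{Re}\,\zeta^{*}(w-x)>2-\beta-2\epsilon_{0}-2\gamma$, which exceeds $2-\epsilon$ as soon as $\beta+2\epsilon_{0}+2\gamma<\epsilon$. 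Since $\beta<\alpha_{1}<\alpha<\epsilon$ (the hypothesis $\tfrac{\alpha}{1-\alpha}<\epsilon$ yields in particular $\alpha<\epsilon$) and since $\theta>0$ is fixed before $\epsilon_{0}$ and $\gamma$ are chosen, both provisos are met by taking $\epsilon_{0}$ and $\gamma$ small enough, and no circularity arises.

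The step I expect to be the real obstacle is precisely the containment $S(z^{*},\gamma)\subseteq S(x^{*},\alpha)$: a naive combination of $x^{*}$ with the norming functional $\zeta^{*}$ only places $w$ in a slightly fatter slice of $x_{1}^{*}$ — of radius $\beta+\epsilon_{0}+2\gamma$ rather than $\beta$ — so it is essential to have first shrunk $S(x^{*},\alpha)$ via Lemma~\ref{lem:subslice} to manufacture the positive buffer $\theta=\alpha_{1}-\beta$ that absorbs the error terms $\epsilon_{0}+2\gamma$. By contrast, the "uniformly far" estimate is just the triangle inequality applied to $\zeta^{*}$, and is routine.
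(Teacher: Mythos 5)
Your proof is correct, but it takes a genuinely different route from the paper's. The paper argues dually: it invokes the real-projection characterization of $\Delta$-points (Theorem~\ref{prop:deltaequiv}), takes $P=\frac{\text{Re}\,x^*}{\text{Re}\,x^*x}\otimes x$, picks $y^*\in S_{X^*}$ with $\|y^*-P^*y^*\|\geq 2-\eta$, and defines the subslice functional directly as $z^*=\frac{P^*y^*-y^*}{\|P^*y^*-y^*\|}$ with a radius $\alpha_1$ tailored so that a single inequality, $\frac{\text{Re}\,x^*y}{\text{Re}\,x^*x}\,\text{Re}\,y^*x-\text{Re}\,y^*y>2-\eta$, simultaneously yields the containment $S(z^*,\alpha_1)\subset S(x^*,\alpha)$ and the estimate $\|x-y\|>2-\epsilon$; notably it does not need Lemma~\ref{lem:subslice}. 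You instead stay entirely on the primal side: you first shrink $S(x^*,\alpha)$ via Lemma~\ref{lem:subslice} to manufacture a buffer $\theta=\alpha_1-\beta>0$, apply the slice characterization (ii) to get a nearly antipodal $y$, norm $y-x$ by Hahn--Banach, and cut the subslice with the normalized sum $x_1^*+\zeta^*$. Your parameter bookkeeping checks out (the order of choices $\alpha_1,\ x_1^*,\ \beta,\ \theta$ before $\epsilon_0,\gamma$ avoids circularity, and $\beta<\alpha<\epsilon$ leaves room for $\beta+2\epsilon_0+2\gamma<\epsilon$). What each approach buys: yours is more elementary and self-contained, using only the slice characterization and a single norming functional, at the cost of an extra shrinking step and three small parameters; the paper's is shorter because the one functional $y^*$ witnessing $\|I-P\|\geq 2$ does double duty, but it leans on the adjoint of a real-linear projection and on the harder implication of Theorem~\ref{prop:deltaequiv}.
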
 

\begin{proof}
	 First choose $\eta > 0$ such that $\eta < \min\left\{1 - \frac{1 - \alpha}{\text{Re}\,x^*(x)}, \epsilon - \frac{\alpha}{1 - \alpha}\right\}$. Since $x \in S_X$ is a $\Delta$-point, for the projection $P(y) = \frac{\text{Re}\,x^*y}{\text{Re}\,x^*x}\cdot x$ we have $\|I - P\| \geq 2$ by Theorem \ref{prop:deltaequiv}. Then there exists $y^* \in S_{X^*}$ such that $\|y^* - P^*y^*\| \geq 2 - \eta$. Now define $z^* = \frac{P^*y^* - y^*}{\|P^*y^* - y^*\|} \in S_{X^*}$ and $\alpha_1 = 1 - \frac{2 - \eta}{\|P^*y^* - y^*\|}$ where $P^*y^* = \frac{y^*x}{\text{Re}\,x^*x}\cdot \text{Re}\, x^*$. For every $y \in S(z^*, \alpha_1)$ notice that
	\[
	\frac{\text{Re}\,x^*y}{\text{Re}\,x^*x}\cdot \text{Re}\,y^*x - \text{Re}\,y^*y = \text{Re}\,z^*y\cdot \|P^*y^* - y^*\| > 2 - \eta. 
	\]
	Hence we see that $\frac{\text{Re}\,x^*y}{\text{Re}\,x^*x}\cdot \text{Re}\,y^*x > 1 - \eta$. Since $\text{Re}\, y^*x$ cannot be zero, without loss of generality, assume that $\text{Re}\, y^*x > 0$. Then we have $\text{Re}\, x^*y > (1 - \eta)\cdot\text{Re}\,x^*x > 1 - \alpha$, which shows that $S(z^*, \alpha_1) \subset S(x^*, \alpha)$. Furthermore, notice that $\text{Re}\,x^*y \leq |x^*y| \leq 1$, and so
	\[
	\left\|\frac{x}{\text{Re}\,x^*x} - y\right\|\geq \frac{\text{Re}\,y^*x}{\text{Re}\,x^*x} - \text{Re}\,y^*y \geq 
	\frac{\text{Re}\,x^*y}{\text{Re}\,x^*x}\cdot\text{Re}\,y^*x - \text{Re}\,y^*y > 2 - \eta. 	
	\] 
	Therefore, we obtain
	\[
	\|x - y\| \geq \left\|\frac{x}{\text{Re}\,x^*x} - y\right\| - \left(\frac{1}{\text{Re}\,x^*x}-1\right) > (2 - \eta) - \left(\frac{\alpha}{1 - \alpha}\right) > 2 - \epsilon.
	\]
\end{proof}

Applying Lemma \ref{lem:subslice} again, we can also show the converse of the previous statement. The same proof in \cite[Lemma 2.2]{JR} transfers to complex Banach spaces. 

\begin{Corollary}
	Let $X$ be a complex Banach space. Then $x \in S_X$ is a $\Delta$-point if and only if for every $\epsilon > 0$ and every slice $S = S(x^*, \alpha)$ containing $x$, there exists a slice $S(z^*, \alpha_1)$ such that $S(z^*, \alpha_1) \subset S(x^*, \alpha)$ and $\|x - y\| > 2 - \epsilon$ for all $y \in S(z^*, \alpha_1)$.
\end{Corollary}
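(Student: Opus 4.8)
The plan is to prove the two implications separately; neither is difficult. The forward direction is essentially Lemma \ref{lem:delta} preceded by a reduction step, and the converse is immediate once we invoke the slice characterization of $\Delta$-points from Theorem \ref{prop:deltaequiv}.

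For the forward implication, suppose $x \in S_X$ is a $\Delta$-point, fix $\epsilon > 0$, and let $S = S(x^*, \alpha)$ be an \emph{arbitrary} slice containing $x$. The point is that Lemma \ref{lem:delta} carries the extra normalizing hypothesis $\frac{\alpha}{1-\alpha} < \epsilon$, which need not hold for the given $\alpha$. To get around this I would first pass to a thinner slice: choose $\delta \in (0, \alpha)$ small enough that $\frac{\delta}{1-\delta} < \epsilon$ (possible since $\frac{\delta}{1-\delta} \to 0$ as $\delta \to 0^+$), and apply Lemma \ref{lem:subslice} to obtain $z_0^* \in S_{X^*}$ with $x \in S(z_0^*, \delta) \subset S(x^*, \alpha)$. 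Now Lemma \ref{lem:delta} does apply to the slice $S(z_0^*, \delta)$ with the same $\epsilon$, and it yields a slice $S(z^*, \alpha_1) \subset S(z_0^*, \delta) \subset S(x^*, \alpha)$ with $\|x - y\| > 2 - \epsilon$ for every $y \in S(z^*, \alpha_1)$, which is exactly the asserted conclusion.

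For the converse, assume the stated subslice condition holds; I want to verify condition (ii) of Theorem \ref{prop:deltaequiv}, which then forces $x$ to be a $\Delta$-point. So let $S = S(x^*, \alpha)$ be a slice with $x \in S$ and let $\epsilon > 0$. By hypothesis there is a slice $S(z^*, \alpha_1) \subset S(x^*, \alpha)$ with $\|x - y\| > 2 - \epsilon$ for all $y \in S(z^*, \alpha_1)$. Since any slice of $B_X$ is nonempty, pick any $y \in S(z^*, \alpha_1)$; then $y \in S$ and $\|x - y\| > 2 - \epsilon$, in particular $\|x-y\| \geq 2 - \epsilon$, so (ii) holds and the proof is complete.

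The only (mild) obstacle is noticing the reduction in the forward direction: the corollary is not literally Lemma \ref{lem:delta} because of that lemma's smallness assumption on $\alpha$, and Lemma \ref{lem:subslice} is precisely the tool that lets one shrink to a sufficiently thin subslice beforehand. Beyond that the argument is pure bookkeeping, and the whole proof is the verbatim complex-scalar transcription of \cite[Lemma 2.2]{JR}.
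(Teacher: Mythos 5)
Your proof is correct and follows essentially the route the paper intends: the forward direction is Lemma \ref{lem:delta} after using Lemma \ref{lem:subslice} to shrink an arbitrary slice to one satisfying the smallness condition $\frac{\alpha}{1-\alpha}<\epsilon$, and the converse is immediate from the nonemptiness of slices together with condition (ii) of Theorem \ref{prop:deltaequiv}. The paper does not write this out, merely noting that the argument of \cite[Lemma 2.2]{JR} transfers via Lemma \ref{lem:subslice}, and your write-up supplies exactly that reduction.
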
 

As a consequence, we obtain the relationship between the locally uniformly nonsquare points and the $\Delta$-points on both real and complex Banach spaces. The real case was shown in \cite[Theorem 4.2]{KLT}.

\begin{Proposition}\label{prop:nod}
	Let $X$ be a real or complex Banach space. A locally uniformly nonsquare point $x \in S_X$ is not a $\Delta$-point of $X$. 
\end{Proposition}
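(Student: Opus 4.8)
The plan is to derive a contradiction by testing the $\Delta$-point property and the local uniform nonsquareness of $x$ against the \emph{same} vector. Assume $x\in S_X$ is a locally uniformly nonsquare point, fix $\delta>0$ with $\min\{\|x+y\|,\|x-y\|\}\le 2-\delta$ for all $y\in S_X$, and suppose toward a contradiction that $x$ is also a $\Delta$-point. By Hahn--Banach choose $x^*\in S_{X^*}$ with $x^*x=1$, and set $\epsilon=\alpha=\delta/4$. Since $\text{Re}\,x^*x=1>1-\alpha$, the slice $S=S(x^*,\alpha)$ contains $x$.

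Because $x$ is a $\Delta$-point, Theorem \ref{prop:deltaequiv}(ii), applied to the slice $S$ and to $\epsilon$, produces a vector $y\in S$ with $\|x-y\|\ge 2-\epsilon$. Membership in $S$ forces $\text{Re}\,x^*y>1-\alpha$, so on the one hand $\|y\|\ge\text{Re}\,x^*y>1-\alpha$, and on the other hand $\|x+y\|\ge\text{Re}\,x^*(x+y)=1+\text{Re}\,x^*y>2-\alpha$. Now normalize: put $\tilde y:=y/\|y\|\in S_X$. Since $\|y\|\le 1$ we have $\|y-\tilde y\|=1-\|y\|<\alpha$, whence
\[
\|x-\tilde y\|\ge \|x-y\|-\|y-\tilde y\|> 2-\epsilon-\alpha,\qquad \|x+\tilde y\|\ge \|x+y\|-\|y-\tilde y\|> 2-2\alpha.
\]
With $\epsilon=\alpha=\delta/4$ this yields $\min\{\|x+\tilde y\|,\|x-\tilde y\|\}>2-\tfrac{\delta}{2}>2-\delta$, contradicting the choice of $\delta$. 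Hence $x$ is not a $\Delta$-point.

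I do not anticipate a real obstacle. The one place that needs a little care is the passage from $y\in B_X$ to $\tilde y\in S_X$, but the slice condition already pins $\|y\|$ to within $\alpha$ of $1$, so that perturbation is absorbed into the constants. Note that the argument only ever uses real parts of functionals together with a single norming functional for $x$, so it is indifferent to the scalar field and covers the real and complex cases simultaneously. One could equally base the argument on Lemma \ref{lem:delta} (or its Corollary) in place of Theorem \ref{prop:deltaequiv}(ii); this merely adds the bookkeeping requirement $\tfrac{\alpha}{1-\alpha}<\epsilon$ when fixing $\alpha$ and $\epsilon$.
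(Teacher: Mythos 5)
Your proof is correct and follows essentially the same route as the paper: cut a slice with a norming functional for $x$, note that membership in the slice forces $\|x+y\|$ near $2$ while the $\Delta$-point property supplies $y$ in the slice with $\|x-y\|$ near $2$, then normalize to reach a contradiction with local uniform nonsquareness. The only difference is that you absorb the normalization error by a direct triangle-inequality estimate using only the equivalence (i)$\iff$(ii) of Theorem~\ref{prop:deltaequiv}, whereas the paper invokes Lemma~\ref{lem:delta} to pass to a subslice on which every point (hence also its normalization) stays far from $x$; both are valid.
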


\begin{proof}
	We show that a $\Delta$-point $x \in S_X$ cannot be a locally uniformly nonsquare point. Let $\epsilon > 0$ and $\eta \in (0, \frac{\epsilon}{2})$. By Lemma \ref{lem:delta}, for every $\alpha > 0$ where $\frac{\alpha}{1 - \alpha} < \eta$ and every slice $S = S(x^*, \alpha)$ containing $x$, there exists a slice $S(z^*, \alpha_1) \subset S$ such that $\|x - y\| > 2 - \eta > 2 - \epsilon$ for all $y \in S(z^*, \alpha_1)$. In particular, we have $\frac{\text{Re}\,z^*y}{\|y\|} > \frac{1-\alpha_1}{\|y\|} \geq 1 - \alpha_1$ for every $y \in S(z^*, \alpha_1)$. Hence $y' = \frac{y}{\|y\|} \in S(z^*, \alpha_1)$ and $\|x - y'\| > 2 - \epsilon$ for $y \in S(z^*, \alpha_1)$.   
	
	Moreover, by the fact that $\alpha < \frac{\alpha}{1 - \alpha} < \eta$ and $x, y \in S$, we have $\|x + y'\| \geq \text{Re}\,x^*x + \text{Re}\,x^*y' > 2 - 2\alpha > 2 - 2\eta > 2 - \epsilon$. Thus, for every $\epsilon > 0$ there exists $y' \in S_X$ such that $\min\{\|x + y'\|, \|x - y'\|\} > 2 - \epsilon$. This shows that $x \in S_X$ is not a locally uniformly nonsquare point.
\end{proof}

\begin{Corollary}\label{cor:nodelta}
	Let $X$ be a real or complex Banach space. If $X$ is LUNSQ, then the space does not admit $\Delta$-points. As a consequence, every LUNSQ space does not satisfy the Daugavet property, DD2P, and DLD2P. 
\end{Corollary}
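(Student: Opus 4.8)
The plan is to deduce the whole statement from Proposition \ref{prop:nod} together with the local characterizations of the Daugavet property and of the diametral diameter two properties recorded above. First I would simply unwind the definition of LUNSQ: it says precisely that \emph{every} $x \in S_X$ is a locally uniformly nonsquare point. Hence Proposition \ref{prop:nod} applies to each such $x$ and tells us that no $x \in S_X$ is a $\Delta$-point, i.e. $\Delta_X = \emptyset$. This already gives the first assertion, with no additional work.

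For the three consequences I would argue by contradiction, in each case reducing to the existence of a $\Delta$-point. If $X$ had the DLD2P, then by the Corollary immediately following Theorem \ref{prop:deltaequiv} (and its well-known real counterpart) every point of $S_X$ would be a $\Delta$-point, contradicting $\Delta_X = \emptyset$. Since every slice of $B_X$ is a weakly open subset of $B_X$, the DD2P implies the DLD2P, so a LUNSQ space cannot have the DD2P either. Finally, if $X$ had the Daugavet property, then by the Corollary following the Daugavet-point theorem every point of $S_X$ would be a Daugavet point; and a Daugavet point $x$ satisfies $x \in B_X = \overline{conv}\,\Delta_{\epsilon}(x)$ for all $\epsilon > 0$, so in particular $x \in \overline{conv}\,\Delta_{\epsilon}(x)$, i.e. $x$ is a $\Delta$-point — again contradicting $\Delta_X = \emptyset$. (Alternatively one can note that the Daugavet property implies the DD2P, which has already been excluded.)

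There is essentially no serious obstacle here: all the real content sits in Proposition \ref{prop:nod}, and this corollary is a bookkeeping step that threads that proposition through the chain of implications $\text{DD2P} \Rightarrow \text{DLD2P} \Leftrightarrow (\text{every point of } S_X \text{ is a } \Delta\text{-point})$ and $\text{Daugavet} \Rightarrow (\text{every point is a Daugavet point}) \Rightarrow (\text{every point is a } \Delta\text{-point})$. The only minor point to be careful about is to cite the complex versions of these equivalences (the Corollaries stated above in Section 2), and not merely their real analogues, so that the conclusion genuinely covers both scalar fields as claimed in the statement.
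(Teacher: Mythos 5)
Your proposal is correct and follows the same route the paper intends: the corollary is an immediate consequence of Proposition \ref{prop:nod} (every point of $S_X$ is a locally uniformly nonsquare point, hence not a $\Delta$-point, so $\Delta_X=\emptyset$), combined with the chain Daugavet $\Rightarrow$ DD2P $\Rightarrow$ DLD2P $\Leftrightarrow$ every point of $S_X$ is a $\Delta$-point. Your remark about invoking the complex versions of these equivalences from Section 2 is exactly the right care to take, and nothing further is needed.
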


A Banach space $X$ is said to have the {\it anti-Daugavet property} for a class of operators $\mathcal{M}$ if the following equivalence holds:
\[
\|I + T\| = 1 + \|T\| \iff \|T\| \in \sigma(T),
\]
where $\sigma(T)$ is the spectrum of $T \in \mathcal{M}$. If $\mathcal{M} = L(X)$, we simply say the space $X$ satisfies the anti-Daugavet property. We mention that the only if part always holds for any bounded linear operator. 

It is well-known that any uniformly rotund or uniformly smooth Banach spaces have the anti-Daugavet property. Moreover, this property is connected to the alternative convexity or smoothness properties that are introduced in \cite{KSSW}:

\begin{definition}\label{def:acs}
	\begin{enumerate}[\rm(i)]
		\item A Banach space $X$ is uniformly alternatively convex or smooth (uacs) if for all sequences $(x_n), (y_n) \subset S_X$, $(x_n^*) \subset S_{X^*}$, $\|x_n + y_n\|\rightarrow 2$, and $x_n^*(x_n) \rightarrow 1$ implies $x_n^*(y_n) \rightarrow 1$.
		\item A Banach space $X$ is strongly locally uniformly alternatively convex or smooth (sluacs) if for every $x \in S_X$, $(x_n) \subset S_X$, $(x^*_n) \subset S_{X^*}$, $\|x_n + x\|\rightarrow 2$, and $x_n^*(x_n) \rightarrow 1$ implies $x_n^*(x) \rightarrow 1$. 
		\item A Banach space $X$ is alternatively convex or smooth (acs) if for all $x,y \in S_X$, $x^* \in S_{X^*}$, $\|x + y\|= 2$, and $x^*(x) = 1$ implies $x^*(y) = 1$. 
	\end{enumerate}
\end{definition}
\noindent 
Any uniformly convex (resp. locally uniformly rotund, rotund) and uniformly smooth (resp. uniformly Gateaux-smooth, smooth) Banach spaces are known to be uacs (resp. sluacs, acs) \cite{Hard}.

Even though it is mentioned in \cite{KSSW} that alternative convexity or smoothness for complex Banach spaces can be defined in a similar fashion, any recent investigation on this property also assumes the scalar field to be $\mathbb{R}$. Hence, we provide equivalent definitions that only involve the real part of bounded linear functionals which enable us to consider complex Banach spaces.

\begin{Proposition}
	\begin{enumerate}[\rm(i)]
		\item A Banach space $X$ is uacs if and only if for all sequence $(x_n), (y_n) \subset S_X$, $(x_n^*) \subset S_{X^*}$, $\|x_n + y_n\| \rightarrow 2$, and $\text{Re}\,x_n^*x_n \rightarrow 1$ implies $\text{Re}\,x_n^*y_n \rightarrow 1$.
		\item A Banach space $X$ is sluacs if and only if for every $x \in S_X$, $(x_n) \subset S_X$, $(x_n^*) \subset S_{X^*}$, $\|x_n + x\| \rightarrow 2$ and $\text{Re}\,x_n^*x_n \rightarrow 1$ implies $\text{Re}\,x_n^*x \rightarrow 1$.
		\item A Banach space $X$ is acs if and only if for all $x,y \in S_X$ and $x^* \in S_{X^*}$, $\|x + y\|= 2$ and $\text{Re}\,x^*(x) = 1$ implies $\text{Re}\,x^*(y) = 1$. 
	\end{enumerate}
\end{Proposition}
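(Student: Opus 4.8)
The plan is to prove each of the three equivalences by showing the two conditions are logically equivalent, and since they follow the same pattern I would present (ii) in detail (the sluacs case) and remark that (i) and (iii) are handled identically. The ``only if'' direction of each statement is trivial: if the complex-valued functional values converge to $1$ (or equal $1$), then in particular their real parts converge to $1$ (or equal $1$), since $\operatorname{Re} z \le |z| \le 1$ for $z \in B_{\mathbb{C}}$ forces $\operatorname{Re} z \to 1 \iff z \to 1$ for a sequence in the closed unit disc. So the content is the ``if'' direction: assuming the real-part hypothesis implies the real-part conclusion, I must recover the original complex statement.

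For the ``if'' direction of (ii), suppose $X$ satisfies the real-part condition, and let $x \in S_X$, $(x_n) \subset S_X$, $(x_n^*) \subset S_{X^*}$ with $\|x_n + x\| \to 2$ and $x_n^*(x_n) \to 1$. Since $|x_n^*(x_n)| \le 1$, convergence $x_n^*(x_n) \to 1$ already gives $\operatorname{Re} x_n^*(x_n) \to 1$ (indeed $x_n^*(x_n) \to 1$ in $\mathbb{C}$). So the real-part hypothesis applies verbatim and yields $\operatorname{Re} x_n^*(x) \to 1$. It remains to upgrade this to $x_n^*(x) \to 1$, i.e. to control the imaginary part $\operatorname{Im} x_n^*(x) \to 0$. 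Here I would use the standard rotation trick: for each $n$ pick $\theta_n \in \mathbb{R}$ with $e^{i\theta_n} x_n^*(x_n) = |x_n^*(x_n)|$, replace $x_n^*$ by $e^{i\theta_n} x_n^*$ — this is still in $S_{X^*}$, still satisfies $\operatorname{Re}(e^{i\theta_n}x_n^*)(x_n) = |x_n^*(x_n)| \to 1$, but now we have lost control of $\|x_n + x\|$? No — $\|x_n + x\|$ does not involve $x_n^*$, so it is unaffected. Wait: the issue is that after rotating $x_n^*$, the hypothesis $\operatorname{Re}(e^{i\theta_n}x_n^*)(x_n)\to 1$ holds and $\|x_n+x\|\to2$ holds, so the real-part sluacs condition gives $\operatorname{Re}(e^{i\theta_n}x_n^*)(x) \to 1$. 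But that tells me about the rotated functional applied to $x$, not the original. To close the gap I instead argue directly: from $\operatorname{Re} x_n^*(x) \to 1$ and $|x_n^*(x)| \le 1$ we get $|x_n^*(x)|^2 = (\operatorname{Re} x_n^*(x))^2 + (\operatorname{Im} x_n^*(x))^2 \le 1$, hence $(\operatorname{Im} x_n^*(x))^2 \le 1 - (\operatorname{Re} x_n^*(x))^2 \to 0$, so $\operatorname{Im} x_n^*(x) \to 0$ and therefore $x_n^*(x) \to 1$. No rotation is needed at all.

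So the real obstacle dissolves: the only genuine step is the elementary observation $\operatorname{Re} w_n \to 1,\ |w_n|\le 1 \implies w_n \to 1$, applied twice — once to convert the original hypotheses into real-part hypotheses, and once to convert the real-part conclusion back into the original conclusion. For (i) the same two applications work with the sequence $(y_n)$ in place of the fixed $x$, and for (iii) one uses the same inequality $(\operatorname{Im} x^*(y))^2 \le 1 - (\operatorname{Re} x^*(y))^2 = 0$ to pass from $\operatorname{Re} x^*(y) = 1$ to $x^*(y) = 1$ with no limiting argument. The main thing to get right in writing this up cleanly is to phrase the elementary disc lemma once and invoke it, rather than repeating the $\epsilon$-estimate three times; I expect no substantive difficulty beyond that bookkeeping.
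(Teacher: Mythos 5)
Your proposal is correct and follows essentially the same route as the paper: both reduce everything to the elementary observation that for complex numbers in the closed unit disc, $\operatorname{Re} w_n \to 1$ forces $w_n \to 1$ via $(\operatorname{Im} w_n)^2 \le 1 - (\operatorname{Re} w_n)^2$, applied once to the hypotheses and once to the conclusion (the paper writes this out for (i) with explicit $\epsilon$-estimates, and handles (iii) by the same identity with equality). Your remark that the rotation trick is unnecessary is right, and isolating the disc lemma as a single stated fact is a cleaner write-up than repeating the estimate.
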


\begin{proof}
We assume that $X$ is a complex Banach space. Since the proofs for (i) and (ii) are similar, we only prove (i).

Suppose that $(x_n), (y_n) \subset S_X$, $(x_n^*) \subset S_{X^*}$, $x_n^*x_n \rightarrow 1$ and $\|x_n + y_n\| \rightarrow 2$. Then for every $\epsilon > 0$, there exists $N_1 \in \mathbb{N}$ such that for every $n \geq N_1$, $|x_n^*x_n| > 1 - \frac{\epsilon}{2}$. We see that
\[
1 \geq (\text{Re}\,x_n^*x_n)^2  + (\text{Im}\,x_n^*x_n)^2  = |x_n^*x_n|^2 > \left(1 - \frac{\epsilon}{2}\right)^2 + (\text{Im}\,x_n^*x_n)^2.
\] 	
Hence $\text{Im}\,x_n^*x_n < \sqrt{\epsilon}$, which in turn implies that $\text{Re}\,x_n^*x_n \rightarrow 1$. Then by the assumption, we obtain $\text{Re}\,x_n^*y_n \rightarrow 1$. Thus, for every $\epsilon > 0$, there exists $N_2 \in \mathbb{N}$ such that $\text{Re}\,x_n^*y_n > 1 - \frac{\epsilon}{2}$. This implies that
\[
1 \geq |x_n^*y_n|^2 = (\text{Re}\,x_n^*y_n)^2 + (\text{Im}\,x_n^*y_n)^2 \geq \left(1 - \frac{\epsilon}{2}\right)^2 + (\text{Im}\, x_n^*y_n)^2,
\]
and so $\text{Im}\,x_n^*y_n < \sqrt{\epsilon}$. Therefore, we see that $x_n^* y_n = \text{Re}\,x_n^* y_n + i\text{Im}\,x_n^*y_n \rightarrow 1$. 

For (iii), if for $x, y \in S_X$ and $x^* \in S_{X^*}$ we have $\|x +y\| = 2$ and $x^*x = \text{Re}\,x^*x = 1$, then $\text{Re}\,x^*y = 1$ by the assumption. Hence, we see that
\[
1 \geq |x^*y|^2 = (\text{Re}\,x^*y)^2 + (\text{Im}\,x^*y)^2 = 1 + (\text{Im}\,x^*y)^2.
\]   
Therefore, $x^*y = \text{Re}\,x^*y = 1$. 
\end{proof}

Even though every uacs Banach spaces are UNSQ \cite{Hard, KSSW}, there have been no explicit description on the relationship between the sluacs and LUNSQ (resp. acs and NSQ) Banach spaces. As a matter of fact, a similar statement also holds for both sluacs and acs Banach spaces.

\begin{Proposition}\label{prop:acsnsq}
	\begin{enumerate}[\rm(i)]
		\item Every sluacs space is LUNSQ.
		\item Every acs space is NSQ.
	\end{enumerate}
\end{Proposition}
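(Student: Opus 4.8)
The plan is to prove both parts by contraposition, converting a failure of nonsquareness into a configuration of unit vectors and norming functionals that cannot satisfy the alternative convexity or smoothness hypothesis, read in the real-part form supplied by the preceding proposition.

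For (i), I would start by assuming $X$ is sluacs but not LUNSQ. Unwinding the negation, there is an $x\in S_X$ such that for each $n$ one can pick $y_n\in S_X$ with $\min\{\|x+y_n\|,\|x-y_n\|\}>2-1/n$; hence $\|x+y_n\|\to 2$ and $\|x-y_n\|\to 2$ simultaneously. Next, for each $n$ I would choose, by Hahn--Banach, a functional $x_n^*\in S_{X^*}$ norming $x+y_n$, so that $\text{Re}\,x_n^*x+\text{Re}\,x_n^*y_n=\|x+y_n\|\to 2$; since each summand is at most $1$, both $\text{Re}\,x_n^*x\to 1$ and $\text{Re}\,x_n^*y_n\to 1$. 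The key move is then to feed the sequence $z_n:=-y_n\in S_X$ together with the functionals $-x_n^*\in S_{X^*}$ into the sluacs condition, with $x$ as the fixed point: the hypotheses $\|z_n+x\|=\|x-y_n\|\to 2$ and $\text{Re}(-x_n^*)(z_n)=\text{Re}\,x_n^*y_n\to 1$ both hold, so the conclusion yields $\text{Re}(-x_n^*)(x)\to 1$, i.e.\ $\text{Re}\,x_n^*x\to -1$, contradicting $\text{Re}\,x_n^*x\to 1$.

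For (ii), I would run the static version of the same argument. Assuming $X$ is acs but not NSQ, and using that $\|x\pm y\|\le 2$ always, there are $x,y\in S_X$ with $\|x+y\|=2=\|x-y\|$. Taking $x^*\in S_{X^*}$ norming $x+y$ gives $\text{Re}\,x^*x=\text{Re}\,x^*y=1$. Applying the acs condition to the pair $x,-y$ and the functional $x^*$ --- valid because $\|x+(-y)\|=\|x-y\|=2$ and $\text{Re}\,x^*(x)=1$ --- forces $\text{Re}\,x^*(-y)=1$, that is $\text{Re}\,x^*y=-1$, contradicting $\text{Re}\,x^*y=1$.

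The routine parts --- extracting norming functionals and checking that all vectors and functionals stay on the respective unit spheres --- cause no trouble, so there is no genuine obstacle. The one place that needs care is the sluacs step in (i): one has to insert $-y_n$ and $-x_n^*$ into exactly the right slots of the hypothesis so that the resulting convergence $\text{Re}\,x_n^*x\to -1$ clashes with the convergence $\text{Re}\,x_n^*x\to 1$ already obtained from $\|x+y_n\|\to 2$. It is also important to work with the real-part reformulations of Definition~\ref{def:acs} established just above, rather than the original complex definitions, so that the complex Hahn--Banach norming functionals may be used verbatim, without any unimodular rotation.
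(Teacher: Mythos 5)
Your proposal is correct and follows essentially the same route as the paper: argue by contraposition, extract Hahn--Banach norming functionals for $x+y_n$ (resp.\ $x+y$), and exploit a sign flip in the sluacs/acs hypothesis to force $\operatorname{Re}x_n^*x$ to converge to both $1$ and $-1$. Your organization of part (i) --- using a single family of functionals norming $x+y_n$ and then applying the sluacs condition to the negated data $(-y_n,-x_n^*)$ --- is a slightly cleaner bookkeeping of the same contradiction the paper obtains by norming $x+x_n$ and $x-x_n$ separately.
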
 

\begin{proof}
	(i) Suppose that a sluacs space $X$ is not LUNSQ. Then there exists $x \in S_X$ such that for every $\delta > 0$, there exists $y \in S_X$ such that $\|x + y\| > 2 - \delta$ and $\|x - y\| > 2 - \delta$. So choose a sequence $(x_n)_{n=1}^{\infty} \subset S_X$ such that $\|x + x_n\| > 2 - \frac{1}{2^n}$ and $\|x - x_n\| > 2 - \frac{1}{2^n}$. In view of Hahn-Banach theorem, we can also find a sequence $(x_n^*)_{n=1}^{\infty} \subset S_{X^*}$ such that
	\[
	\text{Re}\,x_n^*x_n - \text{Re}\,x_n^*x = \|x - x_n\|\,\,\, \text{and} \,\,\, \text{Re}\,x_n^*x_n + \text{Re}\,x_n^*x = \|x + x_n\|.
	\]
	
	Then we see that
	\[
	2 - \frac{1}{2^n} < \text{Re}\,x_n^*x_n + \text{Re}\,x_n^*x \leq \|x\| + \text{Re}\,x_n^*x_n = 1 + \text{Re}\,x_n^*x_n, 
	\]
	and so $\text{Re}\,x_n^*x_n \rightarrow 1$ as $n \rightarrow \infty$. Since the space $X$ is assumed to be sluacs, $\text{Re}\,x_n^*x \rightarrow 1$ as $n \rightarrow \infty$. However, by repeating the same argument to $\|x - x_n\|$, we also obtain that $-\text{Re}\,x_n^*x \rightarrow 1$. This leads to a contradiction.
	
	(ii) Suppose that an acs space $X$ is not nonsquare. Then there exist $x, y \in S_X$ such that $\|x + y\| = \|x - y\| = 2$. Let $x^* \in S_{X^*}$ such that $\text{Re}\,x^*(x) + \text{Re}\,x^*(y) = \|x + y\|$. From the fact that 
	\[
	2 = \text{Re}\,x^*(x) + \text{Re}\,x^*(y) \leq  \text{Re}\,x^*(x)  + \|y\| = \text{Re}\,x^*(x) + 1,
	\]
	we have $\text{Re}\,x^*x = 1$. This also shows that $\text{Re}\,x^*y = 1$. However, since $\|x - y\| = 2$ and the space $X$ is acs, we have $-\text{Re}\,x^*x = 1$, which is a contradiction. Therefore, the space $X$ must be nonsquare.   
\end{proof}

We mention that any locally uniformly rotund (LUR) Banach space does not have $\Delta$-points. As a matter of fact, we can show further that every sluacs Banach space does not have $\Delta$-points based on our observation. 

\begin{corollary}
	Let $X$ be a Banach space. Every sluacs Banach space does not contain $\Delta$-points.   
\end{corollary}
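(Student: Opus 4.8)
The plan is to combine two facts already established above. First, by Proposition~\ref{prop:acsnsq}(i), every sluacs Banach space is locally uniformly nonsquare (LUNSQ). Second, by Corollary~\ref{cor:nodelta}, every LUNSQ Banach space fails to admit $\Delta$-points. Chaining these two implications immediately yields that a sluacs space contains no $\Delta$-point; in particular it cannot have the Daugavet property, the DD2P, or the DLD2P.

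If one prefers a self-contained argument, one can recycle the two underlying proofs at once. Suppose $x \in S_X$ is a $\Delta$-point of a sluacs space $X$. By the argument in Proposition~\ref{prop:nod} (which rests on Lemma~\ref{lem:delta}, and hence on the projection characterization in Theorem~\ref{prop:deltaequiv}), for every $\epsilon > 0$ there is $y' \in S_X$ with $\min\{\|x+y'\|,\|x-y'\|\} > 2 - \epsilon$. Choosing a sequence $(x_n) \subset S_X$ with $\|x \pm x_n\| \to 2$ and picking norming functionals $x_n^* \in S_{X^*}$ via Hahn--Banach exactly as in the proof of Proposition~\ref{prop:acsnsq}(i), one finds $\text{Re}\,x_n^*x_n \to 1$; the sluacs condition then forces both $\text{Re}\,x_n^*x \to 1$ and $-\text{Re}\,x_n^*x \to 1$, a contradiction.

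There is no genuine obstacle here: all the substance was already carried out in proving Proposition~\ref{prop:acsnsq}(i) --- transferring the sluacs hypothesis to nonsquareness by separating the two near-extremal directions with norming functionals --- and in Proposition~\ref{prop:nod} / Corollary~\ref{cor:nodelta}, where a $\Delta$-point is shown to witness simultaneous near-maximality of $\|x+y'\|$ and $\|x-y'\|$. The corollary is therefore just the formal composition of these two statements.
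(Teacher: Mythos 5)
Your proof is correct and follows exactly the paper's own route: the paper likewise deduces the corollary by combining Proposition~\ref{prop:acsnsq}(i) (sluacs implies LUNSQ) with Proposition~\ref{prop:nod} (a LUNSQ point is not a $\Delta$-point), of which Corollary~\ref{cor:nodelta} is just a restatement. The additional self-contained unpacking you sketch is a faithful recycling of those two proofs and adds nothing that needs checking.
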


\begin{proof}
	Every sluacs Banach space is LUNSQ by Proposition \ref{prop:acsnsq}.(i). Then by Proposition \ref{prop:nod}, we see that the space does not contain $\Delta$-points.
\end{proof}

We mention that the analogous statement for NSQ Banach spaces cannot be obtained because there exists a rotund Banach space with the DLD2P \cite[Theorem 2.12]{AHNTT}. This fact also leads us to ask whether rotund or NSQ Banach spaces can have the Daugavet property, which has been a long standing open problem for decades. While there is a rotund normed space (not complete) with the Daugavet property \cite{KMMP}, the existence has not been verified for Banach spaces.

\section{Remarks on the Daugavet property of $A(K, X)$}

Let $K$ be a compact Hausdorff space. The space $C(K)$ is the set of all continuous functions over $K$ endowed with the supremum norm $\|\cdot\|_{\infty}$. A {\it uniform algebra} $A$ is a closed subalgebra of $C(K)$ that separates points and contains constant functions. For a compact subset $K \subset \mathbb{C}$, the space $P(K)$ (resp. $R(K)$) of continuous functions that can be approximated uniformly on $K$ by polynomials in $z$ (resp. by rational functions with poles off $K$) and the space $A(K)$ of continuous functions that are analytic on the interior of $K$ are well-known examples of uniform algebras. When $K = \overline{\mathbb{D}}$, the corresponding uniform algebra $A(K) = A(\overline{\mathbb{D}})$ is the disk algebra. We refer to \cite{D, L} for more details on uniform algebras. 

For a complex Banach space $X$, let $C(K, X)$ be the set of all vector-valued continuous functions over $K$ equipped with the supremum norm. We recall the definition of the vector-valued function space $A(K, X)$. 

\begin{Definition}
	Let $K$ be a compact Hausdorff space and $X$ be a Banach space. The space $A(K,X)$ is called {\it a function space over the base algebra $A$} if it is a subspace of $C(K,X)$ that satisfies:
	\begin{enumerate}[\rm(i)]
		\item The base algebra $A := \{x^*\circ f: x^* \in X^*, f \in A(K, X)\}$ is a uniform algebra.
		\item $A \otimes X  = \{f \otimes x: f \in A \,\,\, \text{and} \,\,\, x \in X\}
		\subset A(K, X)$.
		\item For every $g \in A$ and every $f \in A(K, X)$, we have $g\cdot f \in A(K, X)$.
	\end{enumerate}
\end{Definition}

\noindent If $X = \mathbb{F}$, then the space $A(K, X)$ becomes the uniform algebra $A$ on a compact Hausdorff space $K$. It is clear that $C(K, X)$ is a function space over a base algebra $C(K)$. As a nontrivial example, for given Banach spaces $X$ and $Y$,  let $A_{w^*u}(B_{X^*}, Y)$  be the space of all weak$^*$-to-norm uniformly continuous functions on the closed unit ball $B_{X^*}$ that are holomorphic on the interior of $B_{X^*}$. It is a closed subspace of $C(B_{X^*}; Y)$, where $B_{X^*}$ is the weak$^*$-compact set. Then $A_{w^*u}(B_{X^*}; Y)$ is a function space over base algebra $A_{w^*u}(B_{X^*})$.

A subset $L \subset K$ is said to be a {\it boundary} for $A$ if for every $f \in A$ there exists $t \in L$ such that $f(t) = \|f\|_\infty$. The smallest closed boundary for $A$ is called the {\it Shilov boundary} denoted by $\Gamma$. A point $x \in K$ is a {\it strong boundary point} for a uniform algebra $A$ if for every open subset $U \subset K$ containing $x$, there exists $f \in A$ such that $\|f\|_{\infty} = |f(x)| = 1$ and $\sup_{t \in K \setminus U} |f(t)| < 1$. For a compact Hausdorff space $K$, the set of all strong boundary points on $A$ coincides with the Choquet boundary $\Gamma_0$, that is, the set of all extreme points on the set $K_A = \{\lambda \in A^*: \|\lambda\| = \lambda(1_A) = 1\}$ \cite[Theorem 4.3.5]{D}, where $1_A$ is the unit of $A$. Moreover, the closure of $\Gamma_0$ is $\Gamma$ in this case \cite[Corollary 4.3.7.a]{D}. For instance, the Shilov boundary of the disk algebra $A(\overline{\mathbb{D}})$ is the unit circle $\partial\overline{\mathbb{D}}$.

To study various geometrical properties of $A(K, X)$ and Bishop-Phelps-Bollob\'as property for Asplund operators which range space is a uniform algebra, a Urysohn-type lemma has played an important role. Here we use a stronger version of the lemma provided in \cite{CGK}. 

\begin{Lemma}\cite[Lemma 3.10]{CJT}\label{lem:urysohnAKX}
	Let $K$ be a compact Hausdorff space. If $t_0$ is a strong boundary point for a uniform algebra $A \subset C(K)$, then for every open subset $U \subset K$ containing $t_0$ and $\epsilon > 0$, there exists $\phi = \phi_U \in A$ such that $\phi(t_0) = \|\phi\|_{\infty} = 1$, $\sup_{K \setminus U}|\phi(t)| < \epsilon$ and
	\[
	|\phi(t)| + (1 - \epsilon)|1 - \phi(t)| \leq 1
	\]
	for every $t \in K$.
\end{Lemma}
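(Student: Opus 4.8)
The plan is to reduce the whole statement to composing $f$ with a single conformal map of the unit disk onto the ``lens''
\[
\Omega_\epsilon := \{z\in\mathbb{C}\;:\;|z|+(1-\epsilon)|1-z|\le 1\},
\]
so that the required estimate becomes simply the inclusion $\phi(K)\subset\Omega_\epsilon$. We may assume $0<\epsilon<1$, since otherwise $1-\epsilon\le 0$ and the inequality is trivial from $\|\phi\|_\infty\le 1$. First I would record the elementary geometry of $\Omega_\epsilon$: as $z\mapsto|z|$ and $z\mapsto|1-z|$ are convex and $1-\epsilon>0$, the set $\Omega_\epsilon$ is convex; it lies in $\overline{\mathbb{D}}$ because $|z|\le|z|+(1-\epsilon)|1-z|$; the disk of radius $\epsilon/(2-\epsilon)$ about $0$ is contained in $\Omega_\epsilon$, so $0$ is an interior point; and $1\in\Omega_\epsilon$ while $1+s\notin\Omega_\epsilon$ for every $s>0$, so $1\in\partial\Omega_\epsilon$. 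Thus $\Omega_\epsilon$ is a bounded convex body in the plane, in particular a Jordan domain equal to the closure of its interior.

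Next I would build the model map. By the Riemann mapping theorem there is a conformal bijection of $\mathbb{D}$ onto the interior of $\Omega_\epsilon$ carrying $0$ to $0$, and since that interior is a Jordan domain, Carath\'eodory's theorem extends it to a homeomorphism $g\colon\overline{\mathbb{D}}\to\Omega_\epsilon$. Precomposing with a rotation $z\mapsto\zeta z$ of $\mathbb{D}$, where $\zeta\in\partial\mathbb{D}$ satisfies $g(\zeta)=1$ (such $\zeta$ exists since $1\in\partial\Omega_\epsilon$), we may assume $g(1)=1$, while $g(0)=0$ is unchanged. So $g$ is holomorphic on $\mathbb{D}$, continuous on $\overline{\mathbb{D}}$, with $g(\overline{\mathbb{D}})=\Omega_\epsilon$, $g(0)=0$ and $g(1)=1$; in particular $g$ lies in the disk algebra, hence is a uniform limit on $\overline{\mathbb{D}}$ of polynomials $p_n$ (the dilations $g_r(\cdot)=g(r\,\cdot)$ are holomorphic on a neighbourhood of $\overline{\mathbb{D}}$, so are uniform limits of their Taylor polynomials, and $g_r\to g$ uniformly on $\overline{\mathbb{D}}$). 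Using $g(0)=0$ and continuity, fix $\delta>0$ with $|g(w)|<\epsilon$ whenever $|w|\le\delta$.

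Then I would invoke that $t_0$ is a strong boundary point: applied to the open set $U$ it produces $f\in A$ with $\|f\|_\infty=|f(t_0)|=1$ and $\sup_{t\in K\setminus U}|f(t)|<1$; multiplying by a suitable unimodular scalar we get $f(t_0)=1$, and then replacing $f$ by a sufficiently high power we also get $\sup_{t\in K\setminus U}|f(t)|<\delta$ (the power leaves $f(t_0)=1$ intact). Set $\phi:=g\circ f$. Since $f(K)\subset\overline{\mathbb{D}}$ and $p_n\to g$ uniformly there, $p_n\circ f\to\phi$ uniformly on $K$; each $p_n\circ f$ lies in $A$ because $A$ is a subalgebra containing the constants, and $A$ is norm-closed, so $\phi\in A$. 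Now $\phi(t_0)=g(1)=1$; for every $t\in K$ we have $\phi(t)=g(f(t))\in\Omega_\epsilon$, i.e.\ $|\phi(t)|+(1-\epsilon)|1-\phi(t)|\le 1$, which also forces $|\phi(t)|\le 1$ and hence $\|\phi\|_\infty=1$; and for $t\in K\setminus U$ we have $|f(t)|<\delta$, so $|\phi(t)|<\epsilon$, and taking the maximum of the continuous function $|\phi|$ over the compact set $K\setminus U$ yields $\sup_{t\in K\setminus U}|\phi(t)|<\epsilon$.

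The only genuinely substantive step is the passage from the abstract conformal map $g$ to a \emph{member of $A$}; this is exactly where the hypotheses on $A$ (contains the constants, multiplicatively closed, norm-closed) are used, together with the classical description of the disk algebra as the uniform closure of the polynomials on $\overline{\mathbb{D}}$. Everything else — the convexity and shape of $\Omega_\epsilon$, the Carath\'eodory extension, the choice of $\delta$, and the power trick promoting $\sup_{t\in K\setminus U}|f(t)|<1$ to $<\delta$ — is routine bookkeeping.
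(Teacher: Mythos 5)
Your argument is correct, but note that the paper does not actually prove this lemma: it is imported verbatim as \cite[Lemma 3.10]{CJT}, itself a strengthening of the Urysohn-type lemma of Cascales--Guirao--Kadets \cite{CGK}, so there is no in-paper proof to match. Measured against those sources, your proof reaches the same conclusion by the same overall strategy --- produce a peaking function $f$ from the strong boundary point definition, normalize $f(t_0)=1$ and take a high power to force $\sup_{K\setminus U}|f|$ below a threshold $\delta$, then post-compose with a self-map of $\overline{\mathbb{D}}$ whose range is the lens $\Omega_\epsilon=\{|z|+(1-\epsilon)|1-z|\le 1\}$ --- but you obtain the model map $g$ softly, via the Riemann mapping theorem plus Carath\'eodory's extension applied to the convex Jordan domain $\operatorname{int}\Omega_\epsilon$, whereas the cited proofs work with an explicit analytic formula and direct estimates to verify membership in the Stolz-type region. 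Your route buys brevity and avoids any computation (convexity of $z\mapsto|z|+(1-\epsilon)|1-z|$ does all the geometric work), at the cost of invoking Carath\'eodory's theorem; the explicit route is more elementary in its toolkit and gives a concrete $\phi$. All the delicate points are handled: the normalizations $g(0)=0$, $g(1)=1$ are legitimate (rotations preserve both the disk and the base point $0$, and $1\in\partial\Omega_\epsilon$ is hit by some $\zeta\in\partial\mathbb{D}$ since the extension carries boundary onto boundary), membership of $g\circ f$ in $A$ follows correctly from polynomial approximation in the disk algebra together with $A$ being a closed unital subalgebra, and the strictness of $\sup_{K\setminus U}|\phi|<\epsilon$ is rescued by compactness of $K\setminus U$. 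The only cosmetic omission is that in the dismissed case $\epsilon\ge 1$ you should still exhibit $\phi$ (the unimodular-rotated peak function itself works, since then $\sup_{K\setminus U}|\phi|<1\le\epsilon$ and the displayed inequality is automatic); this is trivial and does not affect correctness.
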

	We can also construct a Urysohn-type function at an isolated point in the Shilov boundary.
	
	\begin{Lemma}\cite[Lemma 2.5]{LT}\label{lem:auxiso}
		Let $A$ be a uniform algebra on a compact Hausdorff space $K$ and let $t_0$ be an isolated point of the Shilov boundary $\Gamma$ of $A$. Then there exists a function $\phi \in A$ such that $\phi(t_0) = \|\phi\|_{\infty} = 1$ and $\phi(t) = 0$ for $t \in \Gamma \setminus \{t_0\}$. 
	\end{Lemma}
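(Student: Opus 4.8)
The plan is to bypass any Urysohn-type construction entirely and instead exploit the \emph{minimality} of the Shilov boundary. We may assume $\Gamma \neq \{t_0\}$, since otherwise $\phi \equiv 1$ satisfies the conclusion vacuously. First I would observe that, because $t_0$ is isolated in $\Gamma$, the set $\Gamma \setminus \{t_0\}$ is a \emph{proper closed} subset of $\Gamma$. Since the Shilov boundary is by definition the smallest closed boundary for $A$, the set $\Gamma\setminus\{t_0\}$ cannot be a boundary for $A$; hence there is some $f \in A$ with $\max_{t\in\Gamma\setminus\{t_0\}}|f(t)| < \|f\|_\infty$. As $\Gamma$ itself \emph{is} a boundary, $\|f\|_\infty>0$ is attained on $\Gamma$, and therefore necessarily at $t_0$, so $|f(t_0)| = \|f\|_\infty$. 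Multiplying $f/\|f\|_\infty$ by a suitable unimodular scalar, I obtain $g \in A$ with $g(t_0) = \|g\|_\infty = 1$ and $r := \max_{t\in\Gamma\setminus\{t_0\}}|g(t)| < 1$; that is, $g$ ``peaks'' at $t_0$ relative to $\Gamma$.

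Next I would pass to the powers $g^n$. Since $g^n(g-1)\in A$ and functions of a uniform algebra attain their sup-norm on $\Gamma$, while $g^n(g-1)$ vanishes at $t_0$, one gets
\[
\|g^{n+1}-g^n\|_\infty = \|g^n(g-1)\|_\infty = \max_{t\in\Gamma\setminus\{t_0\}}|g(t)|^n\,|g(t)-1| \le 2r^n .
\]
This bound is summable, so $(g^n)_n$ is Cauchy in the complete space $A$; set $\phi := \lim_n g^n \in A$. Then $\phi(t_0) = \lim_n g^n(t_0) = 1$, while for $t\in\Gamma\setminus\{t_0\}$ we have $|\phi(t)| = \lim_n |g^n(t)| \le \lim_n r^n = 0$, so $\phi(t) = 0$. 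Finally $\|\phi\|_\infty \le \limsup_n\|g^n\|_\infty = 1$ and $\|\phi\|_\infty \ge |\phi(t_0)| = 1$, hence $\|\phi\|_\infty = 1$. This $\phi$ is exactly the function claimed.

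The conceptual crux — and the only step that requires any thought — is the first one: recognizing that the isolated-point hypothesis is precisely what makes $\Gamma\setminus\{t_0\}$ \emph{closed}, so that the minimality of the Shilov boundary immediately produces a function peaking at $t_0$ along the boundary. Once $g$ is in hand, the rest is a routine geometric-series estimate, and in particular neither the strong boundary point property of $t_0$ nor the Urysohn lemma (Lemma \ref{lem:urysohnAKX}) is needed — although one could alternatively obtain $g$ by noting that an isolated point of $\Gamma=\overline{\Gamma_0}$ lies in the Choquet boundary and then applying that lemma. I would expect no genuine obstacle beyond this bookkeeping.
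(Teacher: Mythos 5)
Your argument is correct. Note first that the paper does not prove this lemma at all: it is imported verbatim as \cite[Lemma 2.5]{LT}, so there is no in-paper proof to compare against, and a self-contained argument like yours is a genuine addition. Both of your steps check out. For the first, since $\Gamma$ is closed in $K$ and $t_0$ is isolated in $\Gamma$, the set $\Gamma\setminus\{t_0\}$ is a proper closed subset of $\Gamma$; by minimality of the Shilov boundary it is not a boundary, which (using compactness of $\Gamma\setminus\{t_0\}$ and the fact that $\Gamma$ itself is a boundary) produces $g\in A$ with $g(t_0)=\|g\|_\infty=1$ and $r:=\max_{t\in\Gamma\setminus\{t_0\}}|g(t)|<1$; the degenerate case $\Gamma=\{t_0\}$ is correctly dispatched by $\phi\equiv 1$. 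For the second, the estimate $\|g^{n+1}-g^n\|_\infty=\|g^n(g-1)\|_\infty\le 2r^n$ is valid because the sup-norm of any element of $A$ is attained on $\Gamma$ and $g^n(g-1)$ vanishes at $t_0$; the telescoping series then converges in the closed (hence complete) subalgebra $A$, and the limit $\phi$ has the three required properties. This is the standard ``iterate a relative peak function'' construction; the alternative you sketch at the end (observing that an isolated point of $\Gamma=\overline{\Gamma_0}$ lies in the Choquet boundary and invoking the Urysohn-type Lemma \ref{lem:urysohnAKX} with a neighborhood $U$ satisfying $U\cap\Gamma=\{t_0\}$) also works but still requires a limiting step to turn ``small off $U$'' into ``zero on $\Gamma\setminus\{t_0\}$'', so your route via minimality is the more economical of the two.
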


We recall the following lemma that will be useful for later.
\begin{Lemma}\cite[Lemma 2.4]{LT}\label{lem:AKXisom}
	Let $X$ be a Banach space. Suppose that $L$ is a closed boundary for $A$. The space of restrictions of elements of $A(K, X)$ to $L$ is denoted by $A(L, X)$ and the restrictions of elements of $A$ to $L$ is denoted by $A(L)$. Then $A(L, X)$ is isometrically isomorphic to $A(K, X)$.
\end{Lemma}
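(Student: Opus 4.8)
The plan is to show that the restriction map $R : A(K,X) \to A(L,X)$, $R(f) = f|_L$, is a surjective linear isometry; the asserted isometric isomorphism is then $R$ itself, where $A(L,X)$ carries the supremum norm over $L$. Linearity of $R$ is obvious, and surjectivity holds by the very definition of $A(L,X)$ as the space of restrictions of elements of $A(K,X)$ to $L$. Since an isometry is automatically injective, everything reduces to proving that $\|f|_L\|_\infty = \|f\|_\infty$ for every $f \in A(K,X)$, where the left-hand norm is $\sup_{t \in L}\|f(t)\|$.

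The inequality $\|f|_L\|_\infty \le \|f\|_\infty$ is trivial because $L \subset K$. For the reverse inequality I would transfer the scalar boundary property of $L$ for $A$ to the vector-valued setting through the base algebra. Fix $f \in A(K,X)$ and $\epsilon > 0$, and choose $t_0 \in K$ with $\|f(t_0)\| > \|f\|_\infty - \epsilon$. By the Hahn-Banach theorem pick $x^* \in S_{X^*}$ with $|x^*(f(t_0))| = \|f(t_0)\|$. The key step is the observation that the scalar function $g := x^* \circ f$ lies in the base algebra $A$ by defining property (i) of a function space over $A$; moreover $\|g\|_\infty \ge |g(t_0)| = \|f(t_0)\| > \|f\|_\infty - \epsilon$. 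Since $L$ is a boundary for $A$, there is $t_1 \in L$ with $|g(t_1)| = \|g\|_\infty$, and hence $\|f(t_1)\| \ge |x^*(f(t_1))| = |g(t_1)| > \|f\|_\infty - \epsilon$. Therefore $\sup_{t \in L}\|f(t)\| \ge \|f\|_\infty - \epsilon$, and letting $\epsilon \to 0$ yields $\|f|_L\|_\infty \ge \|f\|_\infty$.

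Combining the two inequalities, $R$ is an isometry, hence injective, and it maps onto $A(L,X)$; so $A(L,X)$ and $A(K,X)$ are isometrically isomorphic via $R$. There is no real obstacle in this argument — it is essentially the classical fact that a boundary is norming for a uniform algebra, together with a routine vectorization; the one point worth highlighting is precisely this vectorization step, namely that $x^* \circ f \in A$ for all $x^* \in X^*$ and $f \in A(K,X)$, which is what makes the scalar boundary property of $L$ available for the norm of $f$.
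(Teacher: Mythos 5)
Your proof is correct, and it is the standard argument: the paper itself states this lemma only with a citation to \cite[Lemma 2.4]{LT} and does not reproduce a proof, but the expected argument is precisely yours — the restriction map is a linear surjection by definition of $A(L,X)$, and it is isometric because for each $f\in A(K,X)$ one passes to $x^*\circ f\in A$ (defining property (i) of the function space) with $x^*$ norming $f(t_0)$, and then uses that the boundary $L$ is norming for the uniform algebra $A$. No gaps; the vectorization step you highlight is indeed the only nontrivial point.
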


The next statement is in the proof of the case (iii) for \cite[Theorem 4.2]{LT}, but we state it explicitly here. 

\begin{Lemma} \label{lem:akxdecomp}
	Let $K$ be a compact Hausdorff space and $\Gamma$ be the Shilov boundary of the base algebra for the space $A(K, X)$. Suppose that $\Gamma$ has an isolated point $t_0$. Then, $A(K, X)$ is isometrically isomorphic to $X \oplus_{\infty} Y$ where $Y$ is $A(\Gamma, X)$ restricted to $\Gamma \setminus \{t_0\}$.  
\end{Lemma}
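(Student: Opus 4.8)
The plan is to use the isolated point $t_0$ to split off an $X$-summand via the Urysohn-type function from Lemma \ref{lem:auxiso}, and to identify the complementary summand using Lemma \ref{lem:AKXisom}. First I would pass to the Shilov boundary: by Lemma \ref{lem:AKXisom} the restriction map gives an isometric isomorphism $A(K,X)\cong A(\Gamma,X)$, so it suffices to work with functions on $\Gamma$ and show $A(\Gamma,X)\cong X\oplus_\infty Y$ where $Y=A(\Gamma,X)|_{\Gamma\setminus\{t_0\}}$. Since $t_0$ is isolated in $\Gamma$, the set $\Gamma\setminus\{t_0\}$ is itself compact, and $Y$ (equipped with the sup norm over $\Gamma\setminus\{t_0\}$) is a well-defined closed subspace of $C(\Gamma\setminus\{t_0\},X)$.

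Next I would write down the candidate isometry. Let $\phi\in A$ be the function from Lemma \ref{lem:auxiso} with $\phi(t_0)=\|\phi\|_\infty=1$ and $\phi\equiv 0$ on $\Gamma\setminus\{t_0\}$. Define $\Phi\colon A(\Gamma,X)\to X\oplus_\infty Y$ by $\Phi(f)=\bigl(f(t_0),\,f|_{\Gamma\setminus\{t_0\}}\bigr)$. This map is clearly linear, and since $t_0$ is isolated we have $\|f\|_\infty=\max\{\|f(t_0)\|,\ \sup_{t\in\Gamma\setminus\{t_0\}}\|f(t)\|\}=\max\{\|f(t_0)\|,\|f|_{\Gamma\setminus\{t_0\}}\|_Y\}$, so $\Phi$ is a linear isometry onto its image; it remains only to check surjectivity. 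Injectivity is immediate from the norm identity.

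For surjectivity, given $x\in X$ and $g\in Y$, choose $h\in A(\Gamma,X)$ with $h|_{\Gamma\setminus\{t_0\}}=g$ (possible by definition of $Y$), and set
\[
f \;=\; \phi\otimes\bigl(x-h(t_0)\bigr)\;+\;h.
\]
Property (ii) of the definition of a function space gives $\phi\otimes(x-h(t_0))\in A(\Gamma,X)$ (after first noting that $A(\Gamma)$ is the base algebra for $A(\Gamma,X)$ and $\phi$, as an element of $A$, restricts into it), so $f\in A(\Gamma,X)$; and since $\phi(t_0)=1$ while $\phi\equiv 0$ off $t_0$, we get $f(t_0)=x$ and $f|_{\Gamma\setminus\{t_0\}}=h|_{\Gamma\setminus\{t_0\}}=g$, i.e. $\Phi(f)=(x,g)$. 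Composing with the isomorphism of Lemma \ref{lem:AKXisom} yields $A(K,X)\cong X\oplus_\infty Y$.

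The only genuinely delicate point is verifying that $\phi\otimes(x-h(t_0))$ really lands in the function space and that $Y$ is a bona fide Banach space in its own right — the former rests on identifying the base algebra of $A(\Gamma,X)$ with the restriction $A(\Gamma)$ of $A$ (so that Lemma \ref{lem:auxiso} can be invoked and property (ii) applied), and the latter on the observation that $\Gamma\setminus\{t_0\}$ is compact because $t_0$ is isolated, so sup-norm completeness of $Y$ follows from closedness of $A(\Gamma,X)$ under the isometric restriction map. Everything else is the routine $\oplus_\infty$ norm computation enabled by $t_0$ being isolated.
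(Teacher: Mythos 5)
Your proposal is correct and follows essentially the same route as the paper: restrict to $\Gamma$ via Lemma \ref{lem:AKXisom}, use the function $\phi$ from Lemma \ref{lem:auxiso} supported at the isolated point, and check that $f\mapsto(f(t_0),f|_{\Gamma\setminus\{t_0\}})$ is a surjective isometry onto $X\oplus_\infty Y$. The only cosmetic difference is that the paper packages $\phi$ as a projection $Pf=\phi\cdot f$ and builds the surjectivity witness from $Pf_1+f_2-Pf_2$ using closure under multiplication by elements of $A$, whereas you write the witness directly as $\phi\otimes(x-h(t_0))+h$ using $A\otimes X\subset A(K,X)$; both are valid.
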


\begin{proof}
	Let $t_0 \in \Gamma$ be an isolated point. By Lemma \ref{lem:auxiso}, there exists $\phi \in A$ such that $\phi(t_0) = \|\phi\|_{\infty} = 1$ and $\phi(t) = 0$ for $t \in \Gamma \setminus \{t_0\}$. Let $\tilde{\Gamma} = \Gamma \setminus \{t_0\}$. Define a norm-one linear operator $P: A(\Gamma, X) \rightarrow A(\Gamma, X)$ by $Pf = \phi \cdot f$ and denote $Y$ as the restriction of $A(\Gamma, X)$ to $\tilde{\Gamma}$. The operator $P$ is a projection because $Pf(t) = 0$ for all $t \neq t_0$ and $P^2f(t_0) = f(t_0) = Pf(t_0)$. As a matter of fact, the image $P(A(\Gamma, X))$ is isometrically isomorphic to $X$. Indeed, define a linear operator $\Psi: P(A(\Gamma, X)) \rightarrow X$ by $\Psi(Pf) = f(t_0)$. Then $\|\Psi(Pf)\|_X =\|f(t_0)\|_X =  \|Pf\|$. Moreover, we see that for every $x \in X$ there exists $f \in A(\Gamma, X)$ such that $f(t_0) = x$. Hence, $\Psi$ is surjective, which in turn implies that the operator $\Psi$ is an isometric isomorphism on $P(A(\Gamma, X))$.
	
	Now, we claim that the space $A(\Gamma, X)$ is isometrically isomorphic to $X \oplus_{\infty} Y$. For $f \in A(\Gamma, X)$, define a bounded linear operator $\Phi: A(\Gamma, X) \rightarrow X \oplus_{\infty} Y$ by $\Phi f = (Pf, f|_{\tilde{\Gamma}})$. Then we see that
	\[
	\|\Phi f\| = \max\{\|Pf\|, \|f|_{\tilde{\Gamma}}\|\} = \max\left\{\|f(t_0)\|_X, \sup_{t \in\tilde{\Gamma}}\|f(t)\|_X\right\} = \|f\|.
	\]  
	
	Notice that for a given $(f, g) \in X \oplus_{\infty} Y$, there exist $f_1, f_2 \in A(\Gamma, X)$ such that $f = Pf_1$ and $g = f_2|_{\tilde{\Gamma}}$. Let $h = Pf_1 + f_2 - Pf_2 \in A(\Gamma, X)$. From the fact that $h|_{\tilde{\Gamma}} = f_2|_{\tilde{\Gamma}}$, we have $\Phi(h) = (Pf_1, f_2|_{\tilde{\Gamma}})$. Hence, the operator $\Phi$ is also surjective, and so it is an isometric isomorphism between $A(\Gamma, X)$ and $X \oplus_{\infty} Y$. Consequently, in view of Lemma \ref{lem:AKXisom}, the space $A(K, X)$ is isometrically isomorphic to $X \oplus_{\infty} Y$. 
\end{proof}

\subsection{The polynomial Daugavet property in $A(K,X)$}

First we provide a sufficient condition for $A(K, X)$ to have the polynomial Daugavet property. We mention that the proof method is inspired by \cite[Theorem 2.7]{CGKM}.

\begin{Theorem}\label{th:polydaug}
	Let $K$ be a compact Hausdorff space and let $\Gamma$ be the Shilov boundary of the base algebra $A$ of $A(K, X)$. If $\Gamma$ does not have isolated points, then $A(K, X)$ has the polynomial Daugavet property.
\end{Theorem}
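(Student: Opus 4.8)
The plan is to verify condition~(ii) of Theorem~\ref{th:polydauggen} for $A(K,X)$. First I would use Lemma~\ref{lem:AKXisom} to replace $K$ by the Shilov boundary $\Gamma$ of the base algebra, so that henceforth $A(K,X)=A(\Gamma,X)$; note that then every nonempty open subset of $\Gamma$ still has no isolated points and the set $\Gamma_0$ of strong boundary points is dense in $\Gamma$. So fix a scalar polynomial $p\in\mathcal{P}(A(\Gamma,X))$ with $\|p\|=1$, a function $f_0\in S_{A(\Gamma,X)}$, and $\epsilon>0$; the task is to produce $\omega\in S_{\mathbb{C}}$ and $g\in B_{A(\Gamma,X)}$ with $\text{Re}\,\omega p(g)>1-\epsilon$ and $\|f_0+\omega g\|>2-\epsilon$.

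The construction grafts a small peak function onto a near-maximiser of $|p|$. Fix small parameters $\delta,\eta>0$, to be tuned against $\epsilon$ and the degree of $p$, choose $g_0\in B_{A(\Gamma,X)}$ with $|p(g_0)|>1-\delta$, and write $p(g_0)=|p(g_0)|\,e^{i\tau}$. The set $W:=\{t\in\Gamma:\|f_0(t)\|_X>1-\delta\}$ is a nonempty open subset of $\Gamma$ (because $\|f_0\|_\infty=1$), hence, having no isolated points, it contains infinitely many strong boundary points; since any fixed finite positive measure on $\Gamma$ has summable atoms, I can pick $s_0\in\Gamma_0\cap W$ with $\mu(\{s_0\})$ as small as desired, where $\mu$ is a fixed finite positive measure on $\Gamma$ controlling $p$ on $B_{A(\Gamma,X)}$ in the sense made precise below, and then a neighbourhood $U\ni s_0$ with $\mu(U)$ tiny. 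Put $u:=f_0(s_0)/\|f_0(s_0)\|_X\in S_X$ and $v:=e^{i\tau}u$, apply Lemma~\ref{lem:urysohnAKX} at $s_0$ to get $\phi=\phi_U\in A$ with $\phi(s_0)=\|\phi\|_\infty=1$, $\sup_{\Gamma\setminus U}|\phi|<\eta$, and $|\phi(t)|+(1-\eta)|1-\phi(t)|\le1$ for all $t$, and set
\[
g:=(1-\phi)\,g_0+\phi\otimes v,\qquad \tilde g:=\frac{g}{1+2\eta},\qquad \omega:=e^{-i\tau}.
\]
Then $g\in A(\Gamma,X)$ by the function-space axioms, $g(s_0)=v$, and the Urysohn estimate yields $\|g(t)\|_X\le|1-\phi(t)|+|\phi(t)|\le1+2\eta$, so $\tilde g\in B_{A(\Gamma,X)}$.

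The norm inequality is then immediate from evaluation at $s_0$:
\[
\|(f_0+\omega\tilde g)(s_0)\|_X=\Bigl\|\,f_0(s_0)+\tfrac{1}{1+2\eta}u\,\Bigr\|_X=\|f_0(s_0)\|_X+\tfrac{1}{1+2\eta}>(1-\delta)+(1-2\eta)>2-\epsilon
\]
once $\delta,\eta$ are small. For the inequality on $p$ it suffices to show $|p(\tilde g)|>1-\epsilon/2$ and $|\arg p(\tilde g)-\tau|$ small, since then $\text{Re}\,\omega p(\tilde g)=|p(\tilde g)|\cos(\arg p(\tilde g)-\tau)>1-\epsilon$; as $\|\tilde g-g\|\le2\eta$ and polynomials are uniformly continuous on bounded sets, this reduces to showing $p(g)$ is close to $p(g_0)$, after which Theorem~\ref{th:polydauggen} finishes the proof.

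The hard part will be exactly this last reduction. Here $g-g_0=\phi\,(v-g_0)$ has norm $<2\eta$ off $U$ and norm $\le2$ on $U$; expanding each homogeneous part of $p(g)-p(g_0)$ through its associated symmetric multilinear form, every resulting term carries at least one slot filled by $\phi\,(v-g_0)$, and one would estimate such a term by splitting the pairing into its part away from $U$ (negligible by the $O(\eta)$ norm bound there) and its part on $U$ (negligible because $\mu(U)$ and $\mu(\{s_0\})$ are tiny), giving $|p(g)-p(g_0)|<\delta$. The genuine content is the existence of such a control measure $\mu$: unlike for the linear Daugavet property, a polynomial need not be insensitive to altering a function on an arbitrary small open set, so one must first know that $p$ is dominated on $B_{A(\Gamma,X)}$ by a finite measure on $\Gamma$. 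I would derive this from the associated bounded multilinear forms together with classical domination (Grothendieck-type) results for multilinear forms on spaces of continuous functions, following \cite{CGKM} in spirit; and it is precisely here that the hypothesis that $\Gamma$ has no isolated points is used, by supplying enough strong boundary points near a near-maximum of $\|f_0\|$ for one of them to be $\mu$-negligible.
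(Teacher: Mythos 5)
Your skeleton matches the paper's up to the crucial step: both arguments reduce to condition (ii) of Theorem \ref{th:polydauggen}, pick a near-maximiser of the polynomial and a strong boundary point where $\|f_0\|$ is nearly attained, graft on a peak function from Lemma \ref{lem:urysohnAKX} whose peak value realigns the perturbed function with $f_0$, and get the norm inequality by evaluating at the peak. The gap is in how you propose to control $|p(g)-p(g_0)|$. Your mechanism --- a single finite positive ``control measure'' $\mu$ on $\Gamma$ dominating $p$ on the ball, so that a perturbation of sup-norm $\le 2$ supported (up to an $O(\eta)$ tail) on a set of small $\mu$-measure moves $p$ by little --- is not available. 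A bounded $m$-linear form on a $C(K)$-type space is represented by a polymeasure of bounded semivariation, not bounded variation; for $m\ge 3$ there are bounded symmetric multilinear forms on $c_0$ (hence on $C(K)$ spaces, hence potentially on $A(\Gamma,X)$) admitting no such dominating measure --- this is the failure of the trilinear Grothendieck inequality, and it is exactly why the literature you cite does not argue this way. Even for $m=2$ the situation on the vector-valued subspace $A(\Gamma,X)\subset C(\Gamma,X)$ is unclear. So the sentence ``one must first know that $p$ is dominated on $B_{A(\Gamma,X)}$ by a finite measure on $\Gamma$'' identifies a hypothesis that is generally false, and the proof does not close.

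The paper's way around this is qualitatively different and is where the no-isolated-points hypothesis actually enters. After splitting into two cases (if $\alpha^{-1}f_0$ and the near-maximiser $h$ already nearly agree somewhere on the relevant open set $U$, take $g=h$ and you are done), in the remaining case one uses perfectness of $\Gamma$ to choose \emph{infinitely many} pairwise disjoint open subsets $U_i\subset U$ meeting $\Gamma$, with strong boundary points $t_i\in U_i$ and Urysohn functions $\phi_i$, and forms the whole sequence $h_i=h+\phi_i(\alpha^{-1}f_0(t_i)-h(t_i))$. The perturbations $\phi_i(\alpha^{-1}f_0(t_i)-h(t_i))$ generate a weakly unconditionally Cauchy series; by the Bessaga--Pe{\l}czy\'nski selection principle some subsequence is equivalent to the $c_0$-basis, and since polynomials are weakly continuous on bounded subsets of $c_0$, one gets $p(h_{\sigma(i)})\to p(h)$ along that subsequence --- no domination by a measure is needed, and no single bump is claimed to be harmless. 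You would need to replace your ``control measure'' paragraph by this disjointification-plus-$c_0$ argument (or an equivalent weak-sequential-continuity device) for the proof to work; the rest of your construction would then survive essentially intact.
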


\begin{proof}
	 In view of Theorem \ref{th:polydauggen}, it suffices to show that for every $p \in \mathcal{P}(A(K,X))$ with $\|p\| = 1$, every $f_0 \in S_{A(K, X)}$, and every $\epsilon > 0$, there exist $\alpha \in S_{\mathbb{C}}$ and $g \in B_{A(K, X)}$ such that
	\[
	\text{Re}\,\alpha p(g) > 1 - \epsilon\,\,\, \text{and} \,\,\, \|f_0 + \alpha g\| > 2 - \epsilon.
	\]
	
	Let $0 < \epsilon < 1$, $P \in \mathcal{P}(A(K, X))$ with $\|P\| = 1$, and $f_0 \in S_{A(K, X)}$. Choose $h \in S_{A(K, X)}$ and $\alpha \in S_{\mathbb{C}}$ such that $|P(h)| > 1 - \frac{\epsilon}{2}$ and $\text{Re}\,\alpha P(h) > 1 - \frac{\epsilon}{2}$. Also choose $t_0 \in \Gamma_0$ such that $\|f_0(t_0)\|_X > 1 - \frac{\epsilon}{8}$. Let $U = \{t \in K: \|f_0(t) - f_0(t_0)\|_X < \frac{\epsilon}{8}\,\,\, \text{and}\,\,\, \|h(t) - h(t_0)\|_X < \frac{\epsilon}{8}\}$ be a nonempty open subset of $K$. We consider two cases.
	
	Case 1: Suppose that there exists $(t_i)_{i=1}^{\infty} \subset U$ such that $\|\alpha^{-1}f_0(t_i) - h(t_i)\|_X \rightarrow 0$. Then we have
	\begin{eqnarray*}
		\|f_0 + \alpha h\| &\geq& \sup_i \|f_0(t_i) + \alpha h(t_i)\|_X\\
		&\geq& \sup_i(2\|f_0(t_0)\|_X - 2\|f_0(t_0) - f_0(t_i)\|_X - \|f_0(t_i) - \alpha h(t_i)\|_X)\\
		&\geq& 2 - \frac{\epsilon}{4} - \frac{\epsilon}{4} - \frac{\epsilon}{4} > 2 - \epsilon, 
	\end{eqnarray*}
	and we are done with $g = h$.
	
	Case 2: Now suppose that there exists $\eta > 0$ such that $\|\alpha^{-1}f_0(t) - h(t)\|_X > \eta$ for every $t \in U$. Since $\Gamma$ is perfect, we see that the strong boundary point $t_0 \in U$ is not an isolated point. Let $\{U_i\}_{i=1}^{\infty}$ be a collection of pairwise disjoint open subsets of $U$ such that $\cup_{i = 1}^{\infty} U_i \subset U$ and $U _i \cap \Gamma \neq \emptyset$. From the fact that the Choquet boundary $\Gamma_0$ is dense in $\Gamma$, there exist strong boundary points $t_i \in U_i$ for each $i \in \mathbb{N}$. Then by Lemma \ref{lem:urysohnAKX}, there exists $\phi_i \in A$ such that 
	\begin{equation}\label{eq:urysohns}
	\phi_i(t_i) = 1, \,\,\,, \sup_{K\setminus U_i}|\phi_i(t)| < \frac{\epsilon}{2^{i+3}}, \,\,\, \text{and} \,\,\, |\phi_i(t)| + \left(1 - \frac{\epsilon}{2^{i+3}}\right)|1 - \phi_i(t)| \leq 1 \,\,\, \text{for every} \,\,\, t \in K. 
	\end{equation}
	
	Let $h_i = h + \phi_i(\alpha^{-1} f_0(t_i) - h(t_i)) \in A(K, X)$. Then for every $t \in \cup_{i = 1}^{\infty} U_i$, by (\ref{eq:urysohns}), we have 
	\begin{eqnarray*}
		\|h_i(t)\|_X &=& \|h(t) + \phi_i(t)\alpha^{-1}f_0(t_i) - \phi_i(t)h(t_i)\|_X\\
		&\leq& \|h(t) - h(t_i)\|_X + \|h(t_i) - \phi_i(t)h(t_i)\|_X + \|\phi_i(t) \alpha^{-1} f_0(t_i)\|_X\\
		&\leq& \|h(t) - h(t_0)\|_X + \|h(t_0) - h(t_i)\|_X + |1 - \phi_i(t)| + |\phi_i(t)|\\
		&\leq& \frac{\epsilon}{4} + \left(1 - \frac{\epsilon}{2^{i+3}}\right)|1 - \phi(t)| + \frac{\epsilon}{2^{i+3}}|1 - \phi_i(t)| + |\phi_i(t)|\\
		&\leq& \frac{\epsilon}{4} + 1 + \frac{\epsilon}{2^{i+2}} < 1 + \frac{\epsilon}{2}.
	\end{eqnarray*}
	On the other hand, for every $t \in K \setminus \cup_{i=1}^{\infty} U_i$,
	\begin{equation}\label{eq:hit}
	\|h_i(t)\|_X \leq \|h(t)\|_X + |\phi_i(t)| \|\alpha^{-1} f_0(t_i) - h(t_i)\|_X \leq 1 + \frac{\epsilon}{2^{i+3}}\cdot 2 < 1 + \frac{\epsilon}{2}.
	\end{equation}
	Moreover, we see that
	\begin{equation}\label{eq:hi}
	\|h_i\| \geq \|h_i(t_i)\|_X = \|f_0(t_i)\|_X  \geq \|f_0(t_0)\| - \|f_0(t_0) - f_0(t_i)\| \geq 1 - \frac{\epsilon}{2}.
	\end{equation}
	Now, let $g_i = \frac{h_i}{\|h_i\|}$. By (\ref{eq:hi}) we obtain
	\[
	\|h_i - g_i\| = |1 - \|h_i\|| < \frac{\epsilon}{2}.
	\]
	
	For every $(\beta_i) \in \ell_{\infty}$, notice that
	\begin{eqnarray*}
	\sup_n \left\|\sum_{i=1}^n \beta_i \phi_i(\alpha^{-1}f_0(t_i) - h(t_i)) \right\|_X &\leq& \sup_n \sup_{t \in K} \sum_{i=1}^{n}|\beta_i||\phi_i(t)|\|\alpha^{-1}f_0(t_i) - h(t_i)\|_X\\
	&\leq& \sup_n \sup_{t \in K}\sum_{i=1}^n2|\beta_i||\phi_i(t)|\\
	&\leq& 2 \sup_i|\beta_i|\left(1 + \frac{\epsilon}{2^4} + \frac{\epsilon}{2^5}+\cdots\right) = 2 \left(1 + \frac{\epsilon}{8}\right) \sup_i|\beta_i|.
	\end{eqnarray*}
	Hence by \cite[Theorem V.6]{Diest}, the series $\sum_{i=1}^\infty \beta_i \phi_i(\alpha^{-1}f_0(t_i) - h(t_i))$ is weakly unconditionally Cauchy. Since we assumed that $\|\alpha^{-1}f_0(t) - h(t)\|_X > \eta$, there exists a basic subsequence $(\phi_{\sigma(i)}(\alpha^{-1}f_0(t_{\sigma(i)}) - h(t_{\sigma(i)}))$ that is equivalent to the basis $(e_i)$ in $c_0$ by using Bessaga-Pe\l czynski principle \cite[pg. 45]{Diest}. From the fact that a polynomial on a bounded subset of $c_0$ is weakly continuous \cite[Proposition 1.59]{Dn} , we have $\text{Re}\,\alpha P(h_{\sigma(i)})\rightarrow \text{Re}\,\alpha P(h)$ as $i \rightarrow \infty$.
	
	Choose $k \in \mathbb{N}$ such that $\text{Re}\,\alpha P(h_k) > 1 - \frac{\epsilon}{2}$. Then we have
	\[
	\text{Re}\, \alpha P(g_k) = \frac{\text{Re}\, \alpha P(h_k)}{\|h_k\|} > \frac{1 - \epsilon /2}{1 + \epsilon/2} \geq 1 - \epsilon.
	\]
	Therefore, we finally obtain
	\begin{eqnarray*}
		\|f_0 + \alpha g_k\| \geq \|f_0 + \alpha h_k\|- \|g_k - h_k\| &\geq& \|f_0(t_k) + \alpha h_k(t_k)\|_X - \frac{\epsilon}{2}\\
		&=& 2\|f_0(t_k)\|_X - \frac{\epsilon}{2}\\
		&\geq& 2\|f_0(t_0)\|_X - 2\|f_0(t_k) - f_0(t_0)\|_X - \frac{\epsilon}{2}\\
		&\geq& 2\|f_0(t_0)\|_X - \frac{3\epsilon}{4}\\
		&>& 2 - \epsilon,
	\end{eqnarray*}
and we are done with $g = g_k$.
\end{proof}

Let $\mathcal{P}_K(X, X)$ be the set of all compact polynomials from $X$ to itself. For $P \in \mathcal{P}_K(X, X)$, the numerical range $V(P)$ is defined by
\[
V(P) = \{x^*(Px): x^* \in S_{X^*} \,\,\,\text{and} \,\,\, x \in S_X \,\,\,\text{where} \,\,\, x^*(x) = 1\}.
\]  
Now, we recall the polynomial Daugavetian index. 

\begin{Definition}\cite{S}
	For an infinite-dimensional complex Banach space $X$, the polynomial Daugavetian index $\text{Daug}_p\,(X)$ is defined by
	\begin{eqnarray*}
	\text{Daug}_p\,(X) &=& \max\{m \geq 0: \|I + P\| \geq 1 + m\|P\|, \,\,\,\text{for every} \,\,\,P \in \mathcal{P}_K(X, X)\}\\
	&=& \inf\{\omega(P): P \in \mathcal{P}_K(X, X), \|P\| =1\},
	\end{eqnarray*}
	where $\omega(P) = \sup \text{Re}\,V(P)$.
\end{Definition}
\noindent It is well-known that $\text{Daug}_p\,(X) \in [0,1]$ and $\text{Daug}_p(X) \leq \text{Daug}(X)$, where $\text{Daug}(X)$ is the Daugavetian index introduced in \cite{M} considering compact linear operators. A complex Banach space $X$ has the polynomial Daugavet property if and only if  $\text{Daug}_p(X) = 1$. This comes from the fact that a Banach space $X$ satisfies the Daugavet equation for every rank-one polynomials if and only if $X$ satisfies the same equation for every weakly compact polynomials (see Theorem \ref{th:polydauggen}). We recall the following lemma that will be useful later.

\begin{Lemma}\cite[Proposition 2.2, 2.3]{S}\label{lem:familydaug}
Let $\{X_{\lambda}\}_{\lambda \in \Lambda}$ be a family of infinite-dimensional complex Banach spaces and let $Z$ be the $c_0$- or $\ell_{\infty}$-sum of the family. Then
\[
\text{Daug}_p\,(Z) = \inf\{\text{Daug}_p\,(X_\lambda): \lambda \in \Lambda\}.
\]	
\end{Lemma}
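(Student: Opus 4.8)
The plan is to establish the two inequalities $\text{Daug}_p(Z)\le\inf_{\lambda}\text{Daug}_p(X_\lambda)$ and $\text{Daug}_p(Z)\ge\inf_{\lambda}\text{Daug}_p(X_\lambda)$ separately, using throughout the first of the two expressions in the Definition, i.e.\ that $\text{Daug}_p(X)$ is the largest $m\ge 0$ with $\|I+P\|\ge 1+m\|P\|$ for every $P\in\mathcal{P}_K(X,X)$. Write $Z$ as the $c_0$- or $\ell_\infty$-sum of $\{X_\lambda\}_{\lambda\in\Lambda}$, so that $\|z\|=\sup_{\lambda}\|z_\lambda\|$, and let $\iota_\lambda\colon X_\lambda\to Z$ and $\pi_\lambda\colon Z\to X_\lambda$ be the canonical isometric embeddings and norm-one coordinate projections, with $\pi_\lambda\iota_\lambda=\mathrm{id}_{X_\lambda}$. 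I may assume $|\Lambda|\ge 2$, since for $|\Lambda|=1$ the identity is trivial; set $m:=\inf_{\lambda}\text{Daug}_p(X_\lambda)$.

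For the inequality $\text{Daug}_p(Z)\le m$, fix $\lambda$ and $\epsilon>0$. By the definition of the index on $X_\lambda$ there is a (necessarily nonzero) $Q\in\mathcal{P}_K(X_\lambda,X_\lambda)$ with $\|I+Q\|<1+(\text{Daug}_p(X_\lambda)+\epsilon)\|Q\|$. I lift it to $P:=\iota_\lambda\circ Q\circ\pi_\lambda\in\mathcal{P}_K(Z,Z)$ — compactness is preserved under composition with bounded operators — and note $\|P\|=\|Q\|$. A coordinatewise computation, using that the norm on $Z$ is the supremum of the coordinate norms and that some coordinate $\mu\ne\lambda$ may freely contribute a full unit, gives $\|I_Z+tP\|=\max\{1,\|I_{X_\lambda}+tQ\|\}$ for all $t\ge 0$. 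Taking $t=1$ and observing $\|I+Q\|\ge 1+\text{Daug}_p(X_\lambda)\|Q\|\ge 1$, I obtain $\|I_Z+P\|=\|I_{X_\lambda}+Q\|<1+(\text{Daug}_p(X_\lambda)+\epsilon)\|P\|$, so $\text{Daug}_p(Z)<\text{Daug}_p(X_\lambda)+\epsilon$. Letting $\epsilon\to 0$ and then passing to the infimum over $\lambda$ gives $\text{Daug}_p(Z)\le m$.

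For the inequality $\text{Daug}_p(Z)\ge m$, the idea is to freeze all but one coordinate. Let $P\in\mathcal{P}_K(Z,Z)$ and $\epsilon>0$ be given, and pick $z_0\in S_Z$ with $\|Pz_0\|>\|P\|-\epsilon$; since $\|Pz_0\|=\sup_\mu\|\pi_\mu Pz_0\|$, there is $\lambda$ with $\|\pi_\lambda\circ P\|\ge\|\pi_\lambda Pz_0\|>\|P\|-\epsilon$. For each $w\in B_Z$ with $\pi_\lambda w=0$ set $Q_w(y):=\pi_\lambda P(\iota_\lambda y+w)$; composing $P$ with the affine map $y\mapsto\iota_\lambda y+w$ and with $\pi_\lambda$ keeps it a polynomial, and its values lie in $\pi_\lambda(P(B_Z))$, so $Q_w\in\mathcal{P}_K(X_\lambda,X_\lambda)$. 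Because $\iota_\lambda y+w\in B_Z$ whenever $\|y\|\le 1$, and the $\lambda$-coordinate of $(I_Z+P)(\iota_\lambda y+w)$ is exactly $y+Q_w(y)$, restricting the supremum defining $\|I_Z+P\|$ to such points gives $\|I_Z+P\|\ge\|I_{X_\lambda}+Q_w\|$; the hypothesis $\text{Daug}_p(X_\lambda)\ge m$ then yields $\|I_Z+P\|\ge 1+m\|Q_w\|$ for every admissible $w$. Finally, any $z\in B_Z$ decomposes as $z=\iota_\lambda(\pi_\lambda z)+(z-\iota_\lambda\pi_\lambda z)$ with $w=z-\iota_\lambda\pi_\lambda z$ admissible, so $\sup_w\|Q_w\|=\sup_{z\in B_Z}\|\pi_\lambda Pz\|=\|\pi_\lambda\circ P\|$. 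Hence $\|I_Z+P\|\ge 1+m\|\pi_\lambda\circ P\|\ge 1+m(\|P\|-\epsilon)$, and $\epsilon\to 0$ gives $\|I_Z+P\|\ge 1+m\|P\|$. As $P$ was arbitrary, $\text{Daug}_p(Z)\ge m$, and combined with the first part this finishes the proof.

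I expect the main obstacle to be this second inequality — more precisely, the realization that although a polynomial on a $c_0$- or $\ell_\infty$-sum depends on all coordinates, freezing all but the $\lambda$-th coordinate produces a genuine compact polynomial $Q_w$ on $X_\lambda$ to which the hypothesis applies, and that the supremum of $\|Q_w\|$ over admissible $w$ recaptures $\|\pi_\lambda\circ P\|$, which can be made arbitrarily close to $\|P\|$ by the right choice of $\lambda$. Checking that each $Q_w$ is indeed compact, and verifying the elementary coordinatewise norm identities uniformly for the $c_0$- and the $\ell_\infty$-sums, are the routine steps.
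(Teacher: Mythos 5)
Your proof is correct, but note that the paper itself offers no proof of this statement to compare against: Lemma \ref{lem:familydaug} is quoted verbatim from \cite[Propositions 2.2 and 2.3]{S} and used as a black box. Your argument is therefore a self-contained substitute, and it checks out. Both halves are sound: for $\text{Daug}_p(Z)\le\inf_\lambda\text{Daug}_p(X_\lambda)$ the lifting $P=\iota_\lambda\circ Q\circ\pi_\lambda$ preserves compactness and norm, the coordinatewise identity $\|I_Z+P\|=\max\{1,\|I_{X_\lambda}+Q\|\}$ is valid for both the $c_0$- and $\ell_\infty$-sums once $|\Lambda|\ge 2$, and your observation that $\|I_{X_\lambda}+Q\|\ge 1+\text{Daug}_p(X_\lambda)\|Q\|\ge 1$ correctly disposes of the spurious $\max$ with $1$. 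For the reverse inequality, the two points that actually need care are exactly the ones you flag: that $Q_w(y)=\pi_\lambda P(\iota_\lambda y+w)$ is again a \emph{compact} polynomial (its range sits inside $\pi_\lambda(P(B_Z))$, and composition of a polynomial with an affine map is a polynomial, as one sees by polarizing each homogeneous component), and that $\sup_w\|Q_w\|=\|\pi_\lambda\circ P\|$ via the decomposition $z=\iota_\lambda\pi_\lambda z+w$. The only cosmetic remark is that you work throughout with the ``largest $m$'' characterization of $\text{Daug}_p$, whereas the source \cite{S} (as the second formula in the paper's Definition suggests) phrases things through the polynomial numerical radius $\omega(P)$; your version avoids numerical ranges entirely at no cost in rigor, and is arguably more elementary.
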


If there exists a finite-rank projection on $X$ such that $\|P\| = \|I - P\| = 1$, then $\text{Daug}\,(X) = 0$ \cite{M}. Hence $\text{Daug}_p\,(X) = 0$ in this case. Examples of such spaces are $C(K)$ where $K$ has isolated points and Banach spaces $X$ with an 1-unconditional basis \cite[pp. 635]{M}. 
Similar to the space $C(K)$, we can also construct such a projection for uniform algebras.

	\begin{proposition}\label{prop:bicontractive}
		Let $A$ be a uniform algebra on a compact Hausdorff space $K$ and let $t_0$ be an isolated point of the Shilov boundary $\Gamma$ of $A$. Then, there exists a projection $P:A \rightarrow A$ such that $\|P\| = \|I - P\| = 1$.
	\end{proposition}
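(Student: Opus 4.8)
The plan is to realize $P$ as multiplication by a suitable idempotent of $A$. First I would apply Lemma~\ref{lem:auxiso} to obtain a function $\phi\in A$ with $\phi(t_0)=\|\phi\|_\infty=1$ and $\phi(t)=0$ for every $t\in\Gamma\setminus\{t_0\}$. The key observation is that $\phi$ is in fact idempotent: the function $\phi^2-\phi$ belongs to $A$ and vanishes at $t_0$ (where $\phi=1$) and at every other point of $\Gamma$ (where $\phi=0$), so it vanishes identically on $\Gamma$; since $\Gamma$ is a boundary for $A$ we get $\|\phi^2-\phi\|_\infty=\sup_{t\in\Gamma}|\phi^2(t)-\phi(t)|=0$, hence $\phi^2=\phi$ on $K$. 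In particular $\phi$ only takes the values $0$ and $1$, so $K$ splits into the disjoint clopen sets $K_1=\phi^{-1}(1)\ni t_0$ and $K_0=\phi^{-1}(0)\supseteq\Gamma\setminus\{t_0\}$.

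Next I would set $P\colon A\to A$, $Pf=\phi f$. This maps into $A$ because $A$ is an algebra, it is linear, and $P^2f=\phi^2f=\phi f=Pf$, so $P$ is a projection with complementary projection $(I-P)f=(1-\phi)f$. For the norm bounds, note that for every $t\in K$ we have $|\phi(t)|\le 1$ and, because $\phi(t)\in\{0,1\}$, also $|1-\phi(t)|\le 1$; therefore $\|Pf\|_\infty\le\|f\|_\infty$ and $\|(I-P)f\|_\infty\le\|f\|_\infty$ for all $f\in A$, i.e.\ $\|P\|\le 1$ and $\|I-P\|\le 1$. Applying these operators to the constant function $1\in A$ gives the reverse inequalities: $\|P\|\ge\|\phi\|_\infty=1$, and $\|I-P\|\ge\|1-\phi\|_\infty\ge|1-\phi(s)|=1$ for any $s\in\Gamma\setminus\{t_0\}$ (which is nonempty once $A$ is infinite-dimensional, the only case of interest). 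Hence $\|P\|=\|I-P\|=1$.

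The argument is short, and the one place that needs care is the idempotency step, where the defining feature of the Shilov boundary is exactly what is used: $\Gamma$ is a boundary for $A$, so a function of $A$ vanishing on $\Gamma$ is the zero function. The only other subtlety is the mild nondegeneracy $I-P\neq 0$, i.e.\ $\Gamma\neq\{t_0\}$, which is automatic for infinite-dimensional $A$ since then $\Gamma$ is infinite. As an alternative to the explicit computation above, one can instead invoke Lemma~\ref{lem:akxdecomp} with $X=\mathbb{C}$: it identifies $A=A(K,\mathbb{C})$ isometrically with $\mathbb{C}\oplus_\infty Y$, and the coordinate projection onto the $\mathbb{C}$-factor together with its complement are each of norm one, which transports back to the desired projection $P$ on $A$.
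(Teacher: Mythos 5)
Your proposal is correct and follows essentially the same route as the paper: both invoke Lemma~\ref{lem:auxiso} to produce $\phi$ and take $P$ to be multiplication by $\phi$, the paper bounding $\|I-P\|$ by restricting to the boundary $\Gamma$ (where $1-\phi$ is the indicator of $\Gamma\setminus\{t_0\}$) while you first upgrade this to the global idempotency $\phi^2=\phi$ on $K$ — an equivalent use of the same boundary property. Your closing remark that one could instead cite Lemma~\ref{lem:akxdecomp} with $X=\mathbb{C}$ is in fact exactly how the paper phrases its proof, and the nondegeneracy caveat $\Gamma\neq\{t_0\}$ you flag is implicitly assumed in the paper as well.
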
	

	\begin{proof}
		As shown in the proof of Lemma \ref{lem:akxdecomp} with $X = \mathbb{C}$, the projection $P: A \rightarrow A$ defined by $Pf = \phi \cdot f$ has norm one. For every $f \in S_A$ notice that $\|f - Pf\|_{\infty} = \sup_{t \in \Gamma \setminus \{t_0\}} |f(t)| \leq 1$. 
		
		If we choose a strong boundary point $t_1 \in \Gamma \setminus \{t_0\}$, there exists $g \in A$ such that $\|g\|_{\infty} = g(t_1) = 1$, and so we have $
		\|g - Pg\|_{\infty} = \sup_{t \in \Gamma \setminus \{t_0\}} |g(t)| = 1$.  
		Therefore, $\|P\| = \|I - P\| = 1$.		
	\end{proof}
		
	
		
	
	\begin{Corollary}\label{cor:daugzero}
		Let $K$ be a compact Hausdorff space and let $A$ be a uniform algebra on $K$. If the Shilov boundary of $A$ contains an isolated point, then $\text{Daug}_p(A) = 0$.	
	\end{Corollary}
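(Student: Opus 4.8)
The plan is to obtain this as an immediate consequence of Proposition~\ref{prop:bicontractive} together with the criterion of Mart\'in recalled just before Lemma~\ref{lem:familydaug}: a Banach space admitting a finite-rank projection $P$ with $\|P\|=\|I-P\|=1$ has $\text{Daug}\,(X)=0$, and hence $\text{Daug}_p\,(X)=0$ (see \cite{M}). Throughout, $A$ is taken to be infinite-dimensional, as is implicit in the definition of $\text{Daug}_p\,(A)$.

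First I would apply Proposition~\ref{prop:bicontractive} to the isolated point $t_0$ of the Shilov boundary $\Gamma$ of $A$: it yields a norm-one projection $P\colon A\to A$ with $\|I-P\|=1$. Identifying $A$ isometrically with its restriction $A(\Gamma)$ to $\Gamma$ via Lemma~\ref{lem:AKXisom}, this projection is given by $Pf=\phi\cdot f$, where $\phi\in A$ satisfies $\phi(t_0)=\|\phi\|_\infty=1$ and $\phi(t)=0$ for $t\in\Gamma\setminus\{t_0\}$ (Lemma~\ref{lem:auxiso}).

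Next I would observe that $P$ has rank one, which is exactly the computation carried out in the proof of Lemma~\ref{lem:akxdecomp} specialized to $X=\mathbb{C}$: since $Pf$ vanishes on $\Gamma\setminus\{t_0\}$ and $\|Pf\|_\infty=|f(t_0)|$, the assignment $Pf\mapsto f(t_0)$ is a well-defined isometric isomorphism of $P(A)$ onto $\mathbb{C}$. Hence $P$ is a rank-one, in particular finite-rank, projection on the infinite-dimensional space $A$ with $\|P\|=\|I-P\|=1$. By the criterion of \cite{M} this forces $\text{Daug}\,(A)=0$, and since $0\le\text{Daug}_p\,(A)\le\text{Daug}\,(A)$, we conclude $\text{Daug}_p\,(A)=0$.

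There is no genuine obstacle in this argument; the only points deserving brief care are checking that the bicontractive projection supplied by Proposition~\ref{prop:bicontractive} is finite-rank --- which is what lets us invoke Mart\'in's criterion --- and reading the statement in the infinite-dimensional setting for which $\text{Daug}_p$ is defined. The result is precisely the uniform-algebra analogue of the classical fact that $\text{Daug}_p\,(C(K))=0$ whenever $K$ has an isolated point.
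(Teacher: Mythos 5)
Your proof is correct and follows the paper's argument exactly: the paper also deduces the corollary as an immediate consequence of Proposition~\ref{prop:bicontractive} together with Mart\'in's criterion that a finite-rank projection $P$ with $\|P\|=\|I-P\|=1$ forces $\text{Daug}(X)=0$ and hence $\text{Daug}_p(X)=0$. You merely spell out the rank-one verification that the paper leaves implicit.
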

	
	\begin{proof}
		This is an immediate consequence of Proposition \ref{prop:bicontractive}.
	\end{proof}
    
The following result is inspired by \cite[Proposition 2.4]{S}.
\begin{Theorem}\label{th:polydauind}
	Let $X$ be a complex Banach space and let $K$ be a compact Hausdorff space. Then
	\[
	\text{Daug}_p\,(A(K, X)) = \max\{\text{Daug}_p(A), \text{Daug}_p(X)\}.
	\]
\end{Theorem}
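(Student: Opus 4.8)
The plan is to distinguish two cases according to whether the Shilov boundary $\Gamma$ of $A$ has isolated points, using throughout that $\text{Daug}_p$ is always valued in $[0,1]$. If $\Gamma$ is perfect, Theorem \ref{th:polydaug} gives $\text{Daug}_p(A(K,X))=1$, and the same theorem applied with range space $\mathbb{C}$ — that is, to the uniform algebra $A$ itself, viewed as the function space $A(K,\mathbb{C})$ over its own base algebra, whose Shilov boundary is again $\Gamma$ — gives $\text{Daug}_p(A)=1$; so both sides equal $1$. If $\Gamma$ has an isolated point $t_0$, then $\{t_0\}$ is clopen in $\Gamma$, so $\tilde\Gamma:=\Gamma\setminus\{t_0\}$ is compact, and Lemma \ref{lem:akxdecomp} presents $A(K,X)$ isometrically as $X\oplus_\infty Y$ with $Y=A(\Gamma,X)|_{\tilde\Gamma}$, which is readily seen to be a function space over the uniform algebra $A|_{\tilde\Gamma}$ on $\tilde\Gamma$. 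Then Lemma \ref{lem:familydaug} yields $\text{Daug}_p(A(K,X))=\min\{\text{Daug}_p(X),\text{Daug}_p(Y)\}$, whereas Corollary \ref{cor:daugzero} yields $\text{Daug}_p(A)=0$, so $\max\{\text{Daug}_p(A),\text{Daug}_p(X)\}=\text{Daug}_p(X)$. This already gives the inequality ``$\leq$'' and reduces the theorem to proving $\text{Daug}_p(Y)\geq\text{Daug}_p(X)$; since $Y$ again has the shape $A(K,X)$, it suffices to establish the general inequality $\text{Daug}_p(A(K,X))\geq\text{Daug}_p(X)$ for an arbitrary function space over a base algebra.

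For the latter, put $d:=\text{Daug}_p(X)$ and use the rank-one reformulation of the polynomial Daugavetian index (the index-$d$ analogue of Theorem \ref{th:polydauggen}, cf. \cite{S}): it is enough to find, for each $q\in\mathcal{P}(A(K,X))$ with $\|q\|=1$, each $F_0\in S_{A(K,X)}$ and each $\epsilon>0$, a $G\in S_{A(K,X)}$ with $\|G+q(G)F_0\|_\infty>1+d-\epsilon$. By Lemma \ref{lem:AKXisom} we may take $K=\Gamma$; let $t^*\in\Gamma$ attain $\|F_0\|_\infty=1$. First suppose some isolated point $s$ of $\Gamma$ has $\|F_0(s)\|_X>1-\epsilon$ (this includes $s=t^*$ when $t^*$ is isolated). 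By Lemma \ref{lem:auxiso} there is $\phi\in A$ equal to the indicator of $\{s\}$ on $\Gamma$, so $A(K,X)\cong X\oplus_\infty Z$ through $f\mapsto(f(s),f|_{\Gamma\setminus\{s\}})$, with $F_0=(x_0,z_0)$ and $\|x_0\|_X>1-\epsilon$. Take $H=(h_X,h_Z)\in B_{A(K,X)}$ with $|q(H)|>1-\epsilon$ and freeze the second coordinate: $\chi(z):=q(z,h_Z)$ is a polynomial on $X$ with $1-\epsilon<\|\chi\|\leq1$. Applying the index hypothesis $\text{Daug}_p(X)=d$ to the norm-one polynomial $\chi/\|\chi\|$ and to the unit vector $x_0/\|x_0\|$ produces $z\in S_X$ with $\|z+(\chi(z)/\|\chi\|)(x_0/\|x_0\|)\|_X>1+d-\epsilon$; then $G:=(z,h_Z)\in S_{A(K,X)}$ has $q(G)=\chi(z)$ and, absorbing the norm discrepancies of $x_0$ and $\chi$, $\|G+q(G)F_0\|_\infty\geq\|z+\chi(z)x_0\|_X>1+d-c\epsilon$ for an absolute constant $c$. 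Otherwise $\|F_0(s)\|_X\leq1-\epsilon$ for every isolated $s$, forcing $t^*$ to be non-isolated and, by continuity of $\|F_0(\cdot)\|_X$, forcing some neighbourhood of $t^*$ in $\Gamma$ to be perfect; running the argument of Theorem \ref{th:polydaug} inside that neighbourhood — its proof uses only that the chosen strong boundary point where $\|F_0\|$ is nearly attained is non-isolated — gives $\alpha\in S_{\mathbb{C}}$ and $g\in B_{A(K,X)}$ with $\text{Re}\,\alpha q(g)>1-\epsilon$ and $\|F_0+\alpha g\|_\infty>2-\epsilon$, whence, after normalizing $g$ and using that $q(g)$ has sign $\approx\overline{\alpha}$, a $G\in S_{A(K,X)}$ with $\|G+q(G)F_0\|_\infty>2-c\epsilon\geq1+d-c\epsilon$. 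Letting $\epsilon\to0$ completes the general inequality, and with the reduction above the theorem follows.

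The main obstacle I anticipate is the second (perfect-neighbourhood) case. Theorem \ref{th:polydaug} and its proof are written for a globally perfect Shilov boundary, so one must verify that the Urysohn functions of Lemma \ref{lem:urysohnAKX}, together with the Bessaga--Pe\l czy\'nski / $c_0$-weak-continuity mechanism, only ever act on a neighbourhood of the peaking point (so that local perfectness of $\Gamma$ there is enough), and then translate the output into the index-$d$ criterion. A secondary technical point — needed also in the first case — is pinning down the precise rank-one reformulation of the polynomial Daugavetian index (the index-$d$ counterpart of Theorem \ref{th:polydauggen}(iii)) and checking that the frozen-coordinate polynomial $\chi$ has norm within $\epsilon$ of $1$; these steps are routine but carry all the quantitative bookkeeping.
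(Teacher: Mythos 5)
Your outer structure (perfect $\Gamma$ handled by Theorem \ref{th:polydaug}; isolated point handled by Lemma \ref{lem:akxdecomp}, Lemma \ref{lem:familydaug} and Corollary \ref{cor:daugzero}, reducing everything to the inequality $\text{Daug}_p(A(K,X))\geq\text{Daug}_p(X)$) is exactly the paper's. The genuine gap is in how you prove that inequality: you verify $\|I+P\|\geq 1+d\|P\|$ only for \emph{rank-one} polynomials $P=q\otimes F_0$ on $A(K,X)$, and this yields a lower bound on $\text{Daug}_p(A(K,X))$ only if the polynomial Daugavetian index can be computed over rank-one polynomials. That reduction is not established anywhere in the paper: Theorem \ref{th:polydauggen} is precisely the case $d=1$, and its proof rests on the slice characterization of the Daugavet equation, which does not iterate to give $\|I+P\|\geq 1+d\|P\|$ for general compact $P$ from the rank-one case when $d<1$. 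The index is \emph{defined} via all compact polynomials, and you cannot treat the ``index-$d$ analogue of Theorem \ref{th:polydauggen}'' as routine bookkeeping; it is the crux, and it is exactly what the paper's argument is built to avoid.

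The paper instead takes an arbitrary $P\in\mathcal{P}_K(A(K,X);A(K,X))$, picks a strong boundary point $t_0$ with $\|P(f_0)(t_0)\|_X\approx\|P\|$, builds a Urysohn function $\phi$ (Lemma \ref{lem:urysohnAKX}) and a contractive section $\Phi:X\to A(K,X)$, $\Phi(x)=x_1^*x\,(1-\delta/8)(1-\phi)f_0+\phi\cdot x$ (with a maximum-modulus step to rotate $f_0(t_0)$ correctly), and transfers $P$ to the compact polynomial $Q(x)=P(\Phi(x))(t_0)$ on $X$ with $\|Q\|>\|P\|-\epsilon$; applying $\text{Daug}_p(X)$ to $Q$ and evaluating at $t_0$ gives $\|I+P\|\geq 1+\text{Daug}_p(X)\|P\|$ in full generality, with no case split on isolated points and no rank-one reduction. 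Your two sub-cases (freezing a coordinate at an isolated peak point; localizing the proof of Theorem \ref{th:polydaug} near a non-isolated peak point) are plausible in themselves, but they only ever treat rank-one $q\otimes F_0$, so even if carried out they do not close the argument. To repair the proof you would either have to prove the rank-one characterization of $\text{Daug}_p$ for arbitrary values in $[0,1]$ (a nontrivial claim you would need to locate precisely in \cite{S} or prove), or switch to the paper's transfer-of-compact-polynomials construction.
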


\begin{proof}
	Let $P \in \mathcal{P}_K(A(K, X); A(K, X))$. We first show that 
	\[
	\|I + P\| \geq 1 + \text{Daug}_p(X)\|P\|.
	\]
	For a given $\epsilon > 0$ there exists $f_0\in S_{A(K, X)}$ and $t_0 \in \Gamma_0$ such that $\|P(f_0)(t_0)\|_X \geq \|P\| - \frac{\epsilon}{2}$. Since $P$ is continuous at $f_0$, there exists $\delta > 0$ such that 
	\begin{equation}\label{eq:Pcont}
		\|f_0 - g\| < \delta\,\,\, \text{implies}\,\,\, \|P(f_0) - P(g)\| < \frac{\epsilon}{2}.
	\end{equation}
	
	Now, consider $U = \{t \in K: \|f_0(t) - f_0(t_0)\|_X  <\frac{\delta}{4}\}$. Since the set $U$ is a nonempty open subset of $K$ that contains the strong boundary point $t_0$, by Lemma \ref{lem:urysohnAKX}, there exists $\phi \in A$ such that  
	\begin{equation}\label{eq:urysohn1}
		\phi(t_0) = 1,\,\,\, \sup_{K \setminus U} |\phi(t)| < \frac{\delta}{8},\,\,\, \text{and}\,\,\, |\phi(t)| + \left(1 - \frac{\delta}{8}\right) |1 - \phi(t)| \leq 1 \,\,\, \text{for every} \,\,\, t \in K.
	\end{equation}
	 
	Fix $x_0 \in S_X$ such that $f_0(t_0) = \|f_0(t_0)\|_X \cdot x_0$ and define $\Psi: \mathbb{C} \rightarrow A(K, X)$ by
	\[
	\Psi(z) = \left(1 - \frac{\delta}{8}\right)(1 - \phi)f_0 + \phi \cdot x_0 \cdot z.
	\]
	Then we have
	\begin{eqnarray*}
	\Psi(\|f_0(t_0)\|_X)(t) - f_0(t) &=& \left(1 - \frac{\delta}{8}\right)(1 - \phi(t)) f_0(t) + \phi(t)f_0(t_0) - f_0(t)\\
	&=& \phi(t)(f_0(t_0) - f_0(t)) - \frac{\delta(1- \phi(t))f_0(t)}{8}.
	\end{eqnarray*}
	In view of (\ref{eq:urysohn1}), notice that  
	\[
	\left\|\phi(t)(f_0(t_0) - f_0(t))- \frac{\delta(1- \phi(t))f_0(t)}{8}\right\|_X \leq \|\phi\|_{\infty} \cdot \|(f_0(t_0) - f_0(t))\|_X + \frac{\delta}{4}< \frac{\delta}{2}
	\]
	for every $t \in U$ and that 
	\[
	\left\|\phi(t)(f_0(t_0) - f_0(t))- \frac{\delta(1- \phi(t))f_0(t)}{8}\right\|_X < 2 \cdot \frac{\delta}{4} = \frac{\delta}{2}
	\]
	for every $t \in K \setminus U$. Hence we can see that $\|\Psi(\|f_0(t_0)\|_X) - f_0\| < \delta$, and so
	\[
	\|P(\Psi(\|f_0(t_0)\|_X))(t_0) - P(f_0)(t_0)\|_X \leq \|P(\Psi(\|f_0(t_0)\|_X)) - P(f_0)\| < \frac{\epsilon}{2}
	\] 
	by (\ref{eq:Pcont}). This implies that
	\[
	\|P(\Psi(\|f_0(t_0)\|_X))(t_0)\|_X > \|P(f_0)(t_0)\|_X - \frac{\epsilon}{2}> \|P\| - 2\cdot \frac{\epsilon}{2} = \|P\| - \epsilon.
	\] 
	
	In view of Hahn-Banach theorem, there exists $x_0^* \in S_{X^*}$ such that 
	\[
	x_0^*\left(P(\Psi(\|f_0(t_0)\|_X))(t_0)\right) = \|P(\Psi(\|f_0(t_0)\|_X))(t_0)\|_X  > \|P\| - \epsilon.
	\]
	Notice that the function $f(z) = 	x_0^*\left(P(\Psi(z))(t_0)\right)$ is holomorphic. Hence, by the maximum modulus theorem, there exists $z_0 \in S_{\mathbb{C}}$ such that 
	\[
	\|P(\Psi(z_0))(t_0)\|_X \geq x_0^*\left(P(\Psi(\|f_0(t_0)\|_X))(t_0)\right) > \|P\| - \epsilon.
	\]
	
	Take $x_1 = z_0 x_0 \in S_X$ and let $x_1^* \in S_{X^*}$ such that $x_1^*x_1 = 1$. Define a function $\Phi: X \rightarrow A(K, X)$ by 
	\[
	\Phi(x) = x_1^*x\left(1 - \frac{\delta}{8}\right)(1 - \phi)f_0 + \phi\cdot x.
	\]
	We see that $\|\Phi(x)\| \leq 1$ for every $x \in B_X$ from (\ref{eq:urysohn1}). In particular, $\Phi(x_1) = \Psi(z_0)$. Hence $\|P(\Phi(x_1))(t_0)\|_X > \|P\| - \epsilon$. Consider $Q \in \mathcal{P}_K(X; X)$ defined by $Q(x) = P(\Phi(x))(t_0)$. Notice that 
	\[
	\|Q\| \geq \|Qx_1\|_X = \|(P(\Phi(x_1))(t_0)\|_X > \|P\| - \epsilon.
	\]
	This implies that $\|I + Q\| \geq 1 + \text{Daug}_p(X) \|Q\|  > 1 + \text{Daug}_p(X) (\|P\| - \epsilon)$. Now choose $x_2 \in B_X$ such that $\|x_2 + Qx_2\| > 1 + \text{Daug}_p(X) (\|P\| - \epsilon)$ and let $g = \Phi(x_2)$. Then we obtain
	\begin{eqnarray*}
		\| I + P\| \geq \|g + Pg\| &\geq& \|g(t_0) + P(g)(t_0)\|_X\\
		&\geq& \left\|x_1^*x_2\left(1 - \frac{\delta}{8}\right)(1 - \phi(t_0)f(t_0) + \phi(t_0)x_2 + Q(x_2)\right\|_X\\
		&=& \|x_2 + Q(x_2)\|_X > 1 + \text{Daug}_p(X) (\|P\| - \epsilon).
	\end{eqnarray*}
	As $\epsilon \rightarrow 0$, we have $\|I + P\| \geq 1 + \text{Daug}_p(X)\|P\|$. This consequently shows that $\text{Daug}_p(A(K, X)) \geq \text{Daug}_p(X)$.	
	
	If $\Gamma$ does not have isolated points, then $A(K, X)$ has the polynomial Daugavet property by Theorem \ref{th:polydaug}. This implies that 
	\[
	\text{Daug}_p(A(K, X)) = \text{Daug}_p(A) = 1,
	\]
	and so we have $\text{Daug}_p(A(K, X)) = \max\{\text{Daug}_p(A), \text{Daug}_p(X)\}$.
	
	If $\Gamma$ has isolated points, then $A(K, X) = X \oplus_{\infty} Y$ by Lemma \ref{lem:akxdecomp} and $\text{Daug}(A) = 0$ by Corollary \ref{cor:daugzero}. From Lemma \ref{lem:familydaug}, we see that $\text{Daug}_p (A(K, X)) \leq \text{Daug}_p (X)$. Therefore, we also obtain $\text{Daug}_p (A(K, X)) = \max\{\text{Daug}_p (A), \text{Daug}_p (X)\}$.
\end{proof}

\begin{corollary}\label{cor:polydaugAKX}
	Let $K$ be a compact Hausdorff space. Then the space $A(K, X)$ has the polynomial Daugavet property if and only if either the base algebra $A$ or $X$ has the polynomial Daugavet property.
\end{corollary}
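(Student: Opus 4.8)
The plan is to read the corollary off directly from Theorem~\ref{th:polydauind}, together with the fact recorded just after the definition of the polynomial Daugavetian index that a (infinite-dimensional) complex Banach space $Z$ has the polynomial Daugavet property if and only if $\text{Daug}_p(Z) = 1$. Since $\text{Daug}_p$ takes values in $[0,1]$, Theorem~\ref{th:polydauind} gives the chain of equivalences
\[
\text{Daug}_p(A(K,X)) = 1 \iff \max\{\text{Daug}_p(A),\text{Daug}_p(X)\} = 1 \iff \bigl(\text{Daug}_p(A) = 1 \ \text{or}\ \text{Daug}_p(X) = 1\bigr),
\]
so that $A(K,X)$ has the polynomial Daugavet property precisely when one of $A$, $X$ does.

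Concretely I would argue as follows. For the ``if'' direction, suppose $A$ has the polynomial Daugavet property (the argument for $X$ is identical). Then $\text{Daug}_p(A) = 1$, so by Theorem~\ref{th:polydauind} we get $\text{Daug}_p(A(K,X)) \geq \text{Daug}_p(A) = 1$; as the index is at most $1$, this forces $\text{Daug}_p(A(K,X)) = 1$, i.e.\ $A(K,X)$ has the polynomial Daugavet property. For the ``only if'' direction, suppose $A(K,X)$ has the polynomial Daugavet property, so $\text{Daug}_p(A(K,X)) = 1$; by Theorem~\ref{th:polydauind} then $\max\{\text{Daug}_p(A),\text{Daug}_p(X)\} = 1$, hence at least one of $\text{Daug}_p(A)$, $\text{Daug}_p(X)$ equals $1$, and the corresponding space has the polynomial Daugavet property.

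There is no real obstacle here beyond bookkeeping; the single point to keep track of is that the polynomial Daugavetian index, and with it the equivalence ``polynomial Daugavet property $\iff \text{Daug}_p = 1$'', is meaningful only for infinite-dimensional complex Banach spaces. One therefore wants to note at the outset that a finite-dimensional space can never have the polynomial Daugavet property — its unit ball has slices of arbitrarily small diameter, contradicting the Daugavet property and hence a fortiori its polynomial strengthening — and that, conversely, $A(K,X)$ is infinite-dimensional as soon as $A$ or $X$ is, since $f \mapsto f \otimes x_0$ (for a fixed $x_0 \in S_X$) and $x \mapsto 1_A \otimes x$ embed $A$ and $X$ isometrically into $A(K,X)$ by property~(ii) of the base algebra. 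With these remarks in place, every appeal to the index characterization above is legitimate, and the proof reduces to the two lines indicated.
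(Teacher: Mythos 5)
Your proposal is correct and follows exactly the route the paper intends: the corollary is stated as an immediate consequence of Theorem~\ref{th:polydauind} combined with the characterization that a space has the polynomial Daugavet property if and only if its polynomial Daugavetian index equals $1$. Your additional remark handling the infinite-dimensionality caveat is a sensible precaution that the paper leaves implicit, but it does not change the argument.
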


\subsection{Remarks on the property $(D)$ and the convex diametral local diameter two property in $A(K, X)$} 
Since the equivalence between the property ($\mathcal{D}$) and the DLD2P is not clear, it is natural to explore various Banach spaces that potentially distinguish these properties. However, we show that this is not the case for $A(K, X)$. Under the additional assumption that $X$ is uniformly convex, the space $A(K,X)$ has the Daugavet property if and only if the Shilov boundary of the base algebra does not have isolated points \cite[Theorem 5.6]{LT}. Moreover, the Daugavet property of $A(K, X)$ is equivalent to all diametral D2Ps under the same assumption. In fact, carefully inspecting the proof of \cite[Theorem 5.4]{LT}, we see that the rank-one projection constructed in there has norm-one. With the aid of our previous observations, we can see that the DLD2P is also equivalent to the property ($\mathcal{D}$) for $A(K, X)$.


\begin{proposition}\label{prop:dpointvect}\cite[Theorem 5.4]{LT}
	Let $X$ be a uniformly convex Banach space, $K$ be a compact Hausdorff space, $\Gamma$ be the Shilov boundary of the base algebra $A$ of $A(K, X)$, and $f \in S_{A(K, X)}$. Then the following statements are equivalent:
	\begin{enumerate} [\rm(i)]
		\item $f$ is a Daugavet point.
		\item $f$ is a $\Delta$-point.
		\item Every rank-one, norm-one projection $P = \psi \otimes f$, where $\psi \in A(K, X)^*$ with $\psi(f) = 1$, satisfies $\|I - P\| = 2$. 
		\item There is a limit point $t_0$ of $\Gamma$ such that $\|f\| = \|f(t_0)\|_X$. 
	\end{enumerate}
\end{proposition}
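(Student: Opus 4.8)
The equivalences (i) $\Leftrightarrow$ (ii) $\Leftrightarrow$ (iv) are \cite[Theorem 5.4]{LT}; the plan is to splice (iii) into this chain by proving (ii) $\Rightarrow$ (iii) and (iii) $\Rightarrow$ (iv). The first of these is immediate: if $f$ is a $\Delta$-point and $P = \psi \otimes f$ is a rank-one, norm-one projection with $\psi(f) = 1$, then Theorem \ref{prop:deltaequiv} (the equivalence (i) $\Leftrightarrow$ (iii) there) gives $\|I - P\| \geq 2$, while $\|I - P\| \leq 1 + \|P\| = 2$, so $\|I - P\| = 2$.

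For (iii) $\Rightarrow$ (iv) I would argue by contraposition, making explicit the norm-one rank-one projection that is implicit in the proof of \cite[Theorem 5.4]{LT}. Assume (iv) fails, i.e. $\|f(t)\|_X < \|f\| = 1$ at every limit point $t$ of $\Gamma$. Then $E := \{t \in \Gamma : \|f(t)\|_X = 1\}$ is a nonempty closed subset of $\Gamma$ all of whose points are isolated in $\Gamma$, hence in $E$; being compact and discrete, $E = \{t_1, \dots, t_n\}$ is finite. By Lemma \ref{lem:akxdecomp} applied at each of $t_1, \dots, t_n$ in turn (together with Lemma \ref{lem:AKXisom}), the space $A(K,X)$ is isometrically $\big( \bigoplus_{i=1}^{n} X \big) \oplus_\infty Z$, where $Z$ is $A(\Gamma, X)$ restricted to $\Gamma \setminus \{t_1, \dots, t_n\}$, via an isometry sending $f$ to $(f(t_1), \dots, f(t_n), f|_Z)$ with $\|f(t_i)\|_X = 1$ for each $i$ and $\|f|_Z\| = \sup \{ \|f(t)\|_X : t \in \Gamma \setminus \{t_1,\dots,t_n\} \} =: 1 - \eta_0 < 1$ (the supremum is attained because $\{t_1,\dots,t_n\}$ is open and closed in $\Gamma$, hence its complement in $\Gamma$ is compact). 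Choosing $x_i^* \in S_{X^*}$ with $x_i^*(f(t_i)) = 1$ and setting $\psi(g_1, \dots, g_n, h) = \frac{1}{n} \sum_{i=1}^{n} x_i^*(g_i)$, one gets $\psi \in S_{A(K,X)^*}$ with $\psi(f) = 1$, so $P := \psi \otimes f$ is a rank-one, norm-one projection; it then suffices to show $\|I - P\| < 2$, which contradicts (iii).

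To estimate $\|I - P\|$, write a unit-ball element as $g = (u_1, \dots, u_n, h)$ and put $c = \psi(g) = \frac{1}{n} \sum_j x_j^*(u_j)$, so that $(I - P)g = (u_1 - c\, f(t_1), \dots, u_n - c\, f(t_n), h - c\, f|_Z)$; the $Z$-coordinate has norm at most $1 + |c| \, \|f|_Z\| \leq 2 - \eta_0$. For an $X$-coordinate the uniform convexity of $X$ enters: if $\|u_i - c\, f(t_i)\|_X$ were close to $2$, then $\|u_i\|$ and $|c|$ are forced close to $1$; since $c$ is an average of scalars $x_j^*(u_j)$ in the closed unit disk and the scalar field is uniformly convex, each $x_j^*(u_j)$ — in particular $x_i^*(u_i)$ — is then close to $c/|c|$, hence so is $x_i^*(u_i/\|u_i\|)$; rotating by $\overline{c/|c|}$ and using that $x_i^*$ norms $f(t_i)$ together with the modulus of convexity of $X$ forces $u_i/\|u_i\|$, and thus $u_i$, close to $c\, f(t_i)$, contradicting $\|u_i - c\, f(t_i)\|_X \approx 2$. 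Quantified uniformly in $g$, this yields $\eta_1 > 0$ with $\|u_i - c\, f(t_i)\|_X \leq 2 - \eta_1$ for all $i$ and all $g$, whence $\|I - P\| \leq 2 - \min\{\eta_0, \eta_1\} < 2$. I expect this uniform estimate to be the only genuine obstacle, since the scalar $c$ couples all $n$ coordinates; it is handled by the quantitative uniform convexity of the scalar field (if $\big| \frac{1}{n} \sum_j z_j \big| \geq 1 - \delta$ with $|z_j| \leq 1$, then each $z_j$ lies within $\sqrt{2 n \delta}$ of the unit vector in the direction of $\frac{1}{n} \sum_j z_j$) together with the modulus of convexity of $X$.
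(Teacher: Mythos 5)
Your proposal is correct and follows essentially the same route as the paper: (ii) $\Rightarrow$ (iii) via Theorem \ref{prop:deltaequiv} together with $\|I-P\|\le 1+\|P\|=2$, and (iii) $\Rightarrow$ (iv) by exhibiting, when (iv) fails, a rank-one norm-one projection with $\|I-P\|<2$ — exactly the projection the paper obtains by pointing to the proof of \cite[Theorem 5.4]{LT} and observing it has norm one. Your explicit construction (averaging norming functionals over the finite set of isolated points where $\|f(t)\|_X=1$, then combining the uniform convexity of the scalar disk with that of $X$) is a faithful and correct unpacking of that cited argument.
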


\begin{proof}
	(i) $\implies$ (ii) is clear. The implication (ii) $\implies$ (iii) comes from Theorem \ref{prop:deltaequiv}. Indeed, for a Banach space $Y$, a point $f \in S_Y$ is a $\Delta$-point if and only if every rank-one projection of the form $P = \psi\otimes f$, where $\psi \in Y^*$ with $\psi(f) = 1$, satisfies $\|I - P\| \geq 2$. Hence, we immediately have (iii) if this projection $P$ has norm one. (iii) $\implies$ (iv) and (iv) $\implies$ (i) are identical to the proof of (ii) $\implies$ (iii) and (iii) $\implies$ (i) in \cite[Theorem 5.4]{LT}, respectively. 
\end{proof}

\begin{Corollary}\cite[Corollary 5.5]{LT}\label{th:dpoint}
	Let $K$ be a compact Hausdorff space, $\Gamma$ be the Shilov boundary of $A(K)$, and $f \in S_{A(K)}$. Then the following statements are equivalent:
	\begin{enumerate}[\rm(i)]
		\item $f$ is a Daugavet point.
		\item $f$ is a $\Delta$-point.
		\item Every rank-one, norm-one projection $P = \psi \otimes f$, where $\psi \in A(K)^*$ with $\psi(f) = 1$, satisfies $\|I - P\| = 2$.
		\item There is a limit point $t_0$ of $\Gamma$ such that $\|f\|_\infty = |f(t_0)|$.  
	\end{enumerate}			
\end{Corollary}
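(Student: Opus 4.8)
The plan is to obtain this statement as the scalar-valued specialization of Proposition~\ref{prop:dpointvect}. The first observation I would record is that the scalar field $\mathbb{C}$, regarded as a one-dimensional complex Banach space, is a (complex) Hilbert space and is therefore uniformly convex; hence it is an admissible range space for Proposition~\ref{prop:dpointvect}, and the uniform-convexity hypothesis there becomes vacuous in the present situation. Next I would identify the uniform algebra $A(K)$ with the function space $A(K,\mathbb{C})$ over its own base algebra: by the convention fixed at the start of Section~4, when $X=\mathbb{F}$ the space $A(K,X)$ is precisely the uniform algebra $A$ on $K$, and its base algebra $\{x^{*}\circ g : x^{*}\in\mathbb{C}^{*},\ g\in A(K,\mathbb{C})\}$ equals $A(K)$ itself; conditions (i)--(iii) in the definition of a function space over a base algebra hold trivially because $A(K)$ is a point-separating closed subalgebra of $C(K)$ containing the constants. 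In particular the Shilov boundary featuring in Proposition~\ref{prop:dpointvect} is exactly $\Gamma$, the Shilov boundary of $A(K)$.

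With these identifications in place, I would simply check that the four statements of Proposition~\ref{prop:dpointvect} translate verbatim into (i)--(iv) of the corollary. Being a Daugavet point and being a $\Delta$-point are intrinsic to the Banach space $A(K)$, so (i) and (ii) carry over unchanged. In (iii), we have $A(K,\mathbb{C})^{*}=A(K)^{*}$, so the family of rank-one, norm-one projections $P=\psi\otimes f$ with $\psi(f)=1$ is the same; note that this ``norm-one'' refinement is inherited precisely because the proof of Proposition~\ref{prop:dpointvect} derives it from Theorem~\ref{prop:deltaequiv}. Finally, in (iv) the range-space norm $\|\cdot\|_{X}$ specializes to the modulus $|\cdot|$ on $\mathbb{C}$ and $\|f\|$ to $\|f\|_{\infty}$, so ``there is a limit point $t_{0}$ of $\Gamma$ with $\|f\|=\|f(t_{0})\|_{X}$'' reads as ``there is a limit point $t_{0}$ of $\Gamma$ with $\|f\|_{\infty}=|f(t_{0})|$''. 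Since the equivalences of Proposition~\ref{prop:dpointvect} hold, the four conditions are equivalent. (Alternatively, one could run the chain (i)$\Rightarrow$(ii)$\Rightarrow$(iii)$\Rightarrow$(iv)$\Rightarrow$(i) directly, using Theorem~\ref{prop:deltaequiv} for (ii)$\Rightarrow$(iii) and the Urysohn-type Lemma~\ref{lem:urysohnAKX} together with the decomposition of Lemma~\ref{lem:akxdecomp} at an isolated point of $\Gamma$ for the remaining implications, but this merely reproduces the argument already carried out for the vector-valued case.)

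I do not expect any genuine obstacle here: the entire mathematical content is supplied by Proposition~\ref{prop:dpointvect}, and the corollary is a bookkeeping specialization. The only points deserving a moment's attention are confirming that $\mathbb{C}$ qualifies as a uniformly convex range space (it does, trivially) and noting that ``limit point of $\Gamma$'' is the same as ``non-isolated point of $\Gamma$'', which is the phrasing in which this hypothesis is used elsewhere in connection with the Daugavet property of uniform algebras and their Shilov boundaries.
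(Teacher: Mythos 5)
Your proposal is correct and follows exactly the route the paper intends: the corollary is stated immediately after Proposition~\ref{prop:dpointvect} precisely as its specialization to $X=\mathbb{C}$, which is uniformly convex, with $A(K,\mathbb{C})$ identified with $A(K)$ and the four conditions translating verbatim. The paper offers no separate argument beyond this, so your bookkeeping verification matches its (implicit) proof.
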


As a consequence, we obtain the following characterizations for the space $A(K, X)$ and infinite-dimensional uniform algebras. 

\begin{Proposition}\cite[Theorem 5.6]{LT}
	Let $X$ be a uniformly convex Banach space, let $K$ be a compact Hausdorff space, and let $\Gamma$ be the Shilov boundary of the base algebra $A$ of $A(K, X)$. Then the following statements are equivalent:
	\begin{enumerate}[\rm(i)]
		\item $A(K,X)$ has the polynomial Daugavet property.
		\item $A(K,X)$ has the Daugavet property. 
		\item $A(K,X)$ has the DD2P.
		\item $A(K,X)$ has the DLD2P.
		\item $A(K,X)$ has the property ($\mathcal{D}$).
		\item The Shilov boundary $\Gamma$ does not have isolated points.
	\end{enumerate}
\end{Proposition}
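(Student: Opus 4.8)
The plan is to run the cycle $(vi)\Rightarrow(i)\Rightarrow(ii)\Rightarrow(iii)\Rightarrow(iv)\Rightarrow(v)\Rightarrow(vi)$. The implication $(vi)\Rightarrow(i)$ is exactly Theorem \ref{th:polydaug}. The four implications $(i)\Rightarrow(ii)\Rightarrow(iii)\Rightarrow(iv)\Rightarrow(v)$ hold in every Banach space and were all recorded in the introduction: the polynomial Daugavet property implies the Daugavet property; the Daugavet property implies the DD2P; the DD2P implies the DLD2P because every slice is weakly open; and the DLD2P implies property ($\mathcal{D}$) by the argument of \cite{IK}. So all the content sits in $(v)\Rightarrow(vi)$, and this is exactly where uniform convexity of $X$ is indispensable: without it a range space $X$ with the Daugavet property would make $A(K,X)$ Daugavet --- hence satisfy $(i)$--$(v)$ --- no matter what the isolated points of $\Gamma$ are.

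For $(v)\Rightarrow(vi)$ I argue by contraposition. Suppose $\Gamma$ has an isolated point $t_0$. By Lemma \ref{lem:auxiso} choose $\phi\in A$ with $\phi(t_0)=\|\phi\|_\infty=1$ and $\phi\equiv 0$ on $\Gamma\setminus\{t_0\}$, fix any $x_0\in S_X$, and set $f=\phi\otimes x_0$. Then $f\in S_{A(K,X)}$, and $\|f(t)\|_X=|\phi(t)|=0$ at every limit point $t$ of $\Gamma$, since such a point lies in $\Gamma\setminus\{t_0\}$; thus $f$ attains its norm at no limit point of $\Gamma$. If $A(K,X)$ had property ($\mathcal{D}$), then in particular every rank-one, norm-one projection $\psi\otimes f$ with $\psi(f)=1$ would satisfy $\|I-\psi\otimes f\|=2$, which is condition (iii) of Proposition \ref{prop:dpointvect} for this particular $f$; the equivalence (iii)$\Leftrightarrow$(iv) there then produces a limit point of $\Gamma$ at which $f$ attains its norm --- a contradiction. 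Hence $\Gamma$ has no isolated points, and all six statements are equivalent.

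I do not expect a serious obstacle here: once Proposition \ref{prop:dpointvect} is available the argument is pure assembly, the only genuinely quantitative input (uniform convexity) having already been absorbed into that proposition. Should one prefer a self-contained route for $(v)\Rightarrow(vi)$, one can instead invoke Lemma \ref{lem:akxdecomp} to write $A(K,X)\cong X\oplus_\infty Y$ and take the rank-one, norm-one projection $P(x,y)=(x_0^*(x)\,x_0,\,0)$, where $x_0^*\in S_{X^*}$ norms $x_0$; then $\|I-P\|=\max\bigl(1,\ \sup_{\|x\|\le 1}\|x-x_0^*(x)\,x_0\|\bigr)$, and uniform convexity of $X$ gives $\sup_{\|x\|\le 1}\|x-x_0^*(x)x_0\|<2$ --- one splits according to whether $|x_0^*(x)|$ is close to $1$, where the modulus of convexity forces $x$ near $x_0$ and the norm is small, or bounded away from $1$, where $\|x\|+|x_0^*(x)|\le 1+|x_0^*(x)|$ already keeps it below $2$ --- so $\|I-P\|<2$ and property ($\mathcal{D}$) fails. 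This last estimate is the only place needing real care in the self-contained version.
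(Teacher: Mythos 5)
Your proposal is correct and follows exactly the paper's route: the cycle $(vi)\Rightarrow(i)$ via Theorem \ref{th:polydaug}, the chain $(i)\Rightarrow\cdots\Rightarrow(v)$ from the general implications recorded in the introduction, and $(v)\Rightarrow(vi)$ by contraposition through Proposition \ref{prop:dpointvect} (the paper delegates this last step to the proof of \cite[Theorem 5.6]{LT} together with that proposition, which is precisely the argument you spell out with $f=\phi\otimes x_0$). Your self-contained alternative via Lemma \ref{lem:akxdecomp} is a fine bonus but not needed.
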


\begin{proof}
	(i) $\implies$ (ii) $\implies$ (iii) $\implies$ (iv) $\implies$ (v) is clear from their definitions. Theorem \ref{th:polydaug} shows the implication (vi) $\implies$ (i). Showing (v) $\implies$ (vi) is also identical to the proof of \cite[Theorem 5.6]{LT} with Proposition \ref{prop:dpointvect}. 
\end{proof}

\begin{Corollary}\cite[Corollary 5.7]{LT}\label{lemma:equiDau}
	Let $K$ be a compact Hausdorff space and let $\Gamma$ be the Shilov boundary of a uniform algebra $A(K)$. Then the following are equivalent:
	\begin{enumerate}[\rm(i)]
		\item $A(K)$ has the polynomial Daugavet property.
		\item $A(K)$ has the Daugavet property. 
		\item $A(K)$ has the DD2P.
		\item $A(K)$ has the DLD2P.
		\item $A(K)$ has the property ($\mathcal{D}$).
		\item The Shilov boundary $\Gamma$ does not have isolated points.
	\end{enumerate}
\end{Corollary}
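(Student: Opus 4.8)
The plan is to obtain this corollary as the special case $X=\mathbb{C}$ of the preceding Proposition. The scalar field $\mathbb{C}$, viewed as a one-dimensional Hilbert space, is uniformly convex, so it meets the hypothesis of that Proposition; and, as noted immediately after the definition of a function space over a base algebra, when $X=\mathbb{F}$ the space $A(K,X)$ coincides with the uniform algebra $A(K)$ itself, whose base algebra is $\{x^*\circ f:x^*\in\mathbb{C}^*,\ f\in A(K)\}=A(K)$, with the same Shilov boundary $\Gamma$. Under this identification the six conditions of the Proposition become verbatim the six conditions stated here, so all equivalences transfer. The only thing to record is this bookkeeping---that passing to scalars changes neither the base algebra nor its Shilov boundary---which is immediate.

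Alternatively one may give a self-contained cycle $\mathrm{(i)}\Rightarrow\mathrm{(ii)}\Rightarrow\mathrm{(iii)}\Rightarrow\mathrm{(iv)}\Rightarrow\mathrm{(v)}\Rightarrow\mathrm{(vi)}\Rightarrow\mathrm{(i)}$. The implications $\mathrm{(i)}\Rightarrow\mathrm{(ii)}$ (the polynomial Daugavet property implies the Daugavet property), $\mathrm{(ii)}\Rightarrow\mathrm{(iii)}$ (the Daugavet property implies the DD2P), $\mathrm{(iii)}\Rightarrow\mathrm{(iv)}$ (every slice is relatively weakly open in $B_X$), and $\mathrm{(iv)}\Rightarrow\mathrm{(v)}$ (the DLD2P implies property $(\mathcal{D})$, via the result of \cite{IK} recalled in the Introduction) are all general facts already recorded. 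For $\mathrm{(v)}\Rightarrow\mathrm{(vi)}$ one argues by contraposition: if $\Gamma$ has an isolated point, Proposition \ref{prop:bicontractive} produces a projection $P$ on $A(K)$ with $\|P\|=\|I-P\|=1$; after the isometric identification $A(K)\cong A(\Gamma)$ of Lemma \ref{lem:AKXisom} one has $Pf=f(t_0)\,\phi$ on $\Gamma$, so $P$ is rank one with range the line $\mathbb{C}\phi$, and since $\|I-P\|=1\neq 2$ property $(\mathcal{D})$ fails. Finally $\mathrm{(vi)}\Rightarrow\mathrm{(i)}$ is Theorem \ref{th:polydaug} with $X=\mathbb{C}$.

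There is essentially no genuine obstacle here: the corollary is a specialization of a theorem already proved, and even the direct route invokes only results established earlier in the paper. The single point that warrants a moment's care is the step $\mathrm{(v)}\Rightarrow\mathrm{(vi)}$, where one must check that the norm-one projection $Pf=\phi\cdot f$ of Proposition \ref{prop:bicontractive} is genuinely rank one, so that property $(\mathcal{D})$ applies to it; this is exactly the computation $Pf=f(t_0)\,\phi$ on the Shilov boundary together with the isometry $A(K)\cong A(\Gamma)$.
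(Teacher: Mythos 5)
Your primary argument is exactly what the paper intends: the corollary is the special case $X=\mathbb{C}$ of the preceding Proposition, with the bookkeeping that for scalar-valued functions the base algebra is $A(K)$ itself and the Shilov boundary is unchanged, so the proof is correct and matches the paper's. Your alternative self-contained cycle is also sound --- in particular the observation that the projection of Proposition \ref{prop:bicontractive} is rank one (since $Pf=f(t_0)\,\phi$ on $\Gamma$) gives a slightly more direct route to $\mathrm{(v)}\Rightarrow\mathrm{(vi)}$ than the paper's appeal to the Daugavet-point characterization, but this is a minor variation rather than a different proof.
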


In view of Lemma \ref{lem:urysohnAKX}, we can also show that the sufficient condition for the convex-DLD2P in \cite[Theorem 5.9]{LT} can be described with strong boundary points. 

\begin{Theorem}
	Let $K$ be a compact Hausdorff space, $X$ be a uniformly convex Banach space, and let $\Gamma$ be the Shilov boundary of the base algebra of $A(K, X)$. Denote by $\Gamma'$ the set of limit points of the Shilov boundary. If $\Gamma'\cap \Gamma_0 \neq \emptyset$, then $A(K, X)$ has the convex-DLD2P.
\end{Theorem}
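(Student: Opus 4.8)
The statement to prove is that $\overline{conv}\,\Delta_{A(K,X)}=B_{A(K,X)}$, so it suffices to fix an arbitrary $g\in B_{A(K,X)}$ and $\epsilon\in(0,1)$ and exhibit a convex combination of $\Delta$-points of $A(K,X)$ lying within $O(\epsilon)$ of $g$. The plan is to fix a point $t_0\in\Gamma'\cap\Gamma_0$, so that $t_0$ is at once a strong boundary point of the base algebra $A$ and a limit point of $\Gamma$. By continuity of $g$ I would choose an open neighborhood $U$ of $t_0$ with $\|g(t)-g(t_0)\|_X<\epsilon$ for all $t\in U$, and then invoke the strong Urysohn-type Lemma \ref{lem:urysohnAKX} at the strong boundary point $t_0$ to produce $\phi\in A$ with $\phi(t_0)=\|\phi\|_\infty=1$, $\sup_{K\setminus U}|\phi(t)|<\epsilon$, and the crucial slackness estimate $|\phi(t)|+(1-\epsilon)|1-\phi(t)|\le 1$ for every $t\in K$.

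Next I would split the value $g(t_0)$ into two antipodal unit vectors: setting $r=\|g(t_0)\|_X\in[0,1]$ and choosing $w\in S_X$ (take $w=g(t_0)/r$ if $r>0$, and $w$ arbitrary if $r=0$), one has the convex decomposition $g(t_0)=\tfrac{1+r}{2}\,w+\tfrac{1-r}{2}\,(-w)$. Define
\[
g'_{\pm}=(1-\epsilon)(1-\phi)\,g\;\pm\;\phi\cdot w,
\]
which lies in $A(K,X)$ because $\phi\cdot g\in A(K,X)$ by axiom (iii), $\phi\otimes w\in A\otimes X\subset A(K,X)$ by axiom (ii), and $A(K,X)$ is a subspace. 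Using $\|g\|\le 1$, $\|w\|_X=1$ and the Urysohn inequality one gets $\|g'_{\pm}(t)\|_X\le(1-\epsilon)|1-\phi(t)|+|\phi(t)|\le 1$ for every $t$, while $g'_{\pm}(t_0)=\pm w\in S_X$; hence $\|g'_{\pm}\|=1=\|g'_{\pm}(t_0)\|_X$. Since $t_0$ is a limit point of $\Gamma$ and $X$ is uniformly convex, the implication $(\mathrm{iv})\Rightarrow(\mathrm{ii})$ of Proposition \ref{prop:dpointvect} shows that both $g'_+$ and $g'_-$ are $\Delta$-points of $A(K,X)$.

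Finally I would form $F=\tfrac{1+r}{2}\,g'_++\tfrac{1-r}{2}\,g'_-\in conv\,\Delta_{A(K,X)}$; the antipodal $\phi\cdot w$ parts recombine to $r\phi w=\phi\cdot g(t_0)$, so $F=(1-\epsilon)(1-\phi)\,g+\phi\cdot g(t_0)$ and therefore $F-g=\phi\bigl(g(t_0)-g\bigr)-\epsilon(1-\phi)\,g$. Estimating pointwise, on $U$ one has $\|(F-g)(t)\|_X\le|\phi(t)|\,\epsilon+\epsilon|1-\phi(t)|\le 3\epsilon$, and on $K\setminus U$ one has $|\phi(t)|<\epsilon$, hence $\|(F-g)(t)\|_X\le 4\epsilon$; thus $\|F-g\|\le 4\epsilon$. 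Letting $\epsilon\to 0$ gives $g\in\overline{conv}\,\Delta_{A(K,X)}$, and since $g\in B_{A(K,X)}$ was arbitrary this yields the convex-DLD2P. The one delicate point is engineering $g'_{\pm}$ to satisfy two competing demands simultaneously — unit norm attained precisely at the prescribed limit point $t_0$ (so that Proposition \ref{prop:dpointvect} applies and they are genuine $\Delta$-points), yet an $r$-weighted average that reconstructs $g$ up to $\epsilon$ — and it is exactly the slack $(1-\epsilon)$ in Lemma \ref{lem:urysohnAKX} together with the antipodal splitting of $g(t_0)$ that reconciles the two; uniform convexity of $X$ enters only through Proposition \ref{prop:dpointvect}.
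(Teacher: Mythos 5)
Your proposal is correct and follows essentially the same route as the paper: the same Urysohn function $\phi$ from Lemma \ref{lem:urysohnAKX} at a point of $\Gamma'\cap\Gamma_0$, the same pair of $\Delta$-points $(1-\epsilon)(1-\phi)g\pm\phi\cdot w$ certified via Proposition \ref{prop:dpointvect}, and the same weights $\tfrac{1\pm r}{2}$ (the paper's $\lambda=\tfrac{1+\|f(t_0)\|_X}{2}$). The only cosmetic differences are that you treat the cases $g(t_0)=0$ and $g(t_0)\neq 0$ uniformly and work directly with an arbitrary element of $B_{A(K,X)}$ rather than of $S_{A(K,X)}$.
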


\begin{proof}
	In view of Lemma \ref{lem:AKXisom}, we assume that $K = \Gamma$. Denote the set of all $\Delta$-points of $A(K, X)$ by $\Delta$ and the base algebra of $A(K,X)$ by $A$. We claim that $S_{A(K, X)} \subset \overline{\text{conv}}\Delta$.
	
	Let $f \in S_{A(K, X)}$. Choose a point $t_0 \in \Gamma' \cap \Gamma_0$ and let $\lambda = \frac{1 + \|f(t_0)\|_X}{2}$. For $\epsilon > 0$, let $U$ be an open subset of $K$ such that $\|f(t) - f(t_0)\|_X < \epsilon$. Then by Lemma \ref{lem:urysohnAKX}, there exists $\phi \in A$ such that $\|\phi\|_{\infty} = \phi(t_0) = 1$, $\sup_{t \in K \setminus U}|\phi(t)| < \epsilon$, and
	\[
	|\phi(t)| + (1 - \epsilon)|1 - \phi(t)| \leq 1
	\]
	for every $t \in K$.

Choose a norm-one vector $v_0 \in X$ and let
\[
x_0 = 
\begin{cases} \frac{f(t_0)}{\|f(t_0)\|_X} &\mbox{if } f(t_0) \neq 0 \\
	v_0  &\mbox{if } f(t_0)=0.
\end{cases}
\]
Now, define 
	\begin{align*}
		f_1(t) &= (1 - \epsilon)(1 - \phi(t))f(t) + \phi(t)x_0\\
		f_2(t) &= (1 - \epsilon)(1 - \phi(t))f(t) - \phi(t)x_0, \ \ \ t\in K.
	\end{align*}
Notice that $f_1, f_2 \in A(K, X)$ because $A\otimes X \subset A(K, X)$. Moreover,
\begin{eqnarray*}
\|f_1(t)\|_X &=\left\|(1 - \epsilon)(1 - \phi(t))f(t) + \phi(t) x_0\right\|_X \\&\leq (1 - \epsilon)|1 - \phi(t)| + |\phi(t)| \leq 1,
\end{eqnarray*}
for every $t \in K$. In particular, we have $\|f_1(t_0)\|_X = 1$, and so $ \|f_1(t_0)\|_X = \|f_1\| = 1$. By the same argument, we also have $\|f_2(t_0)\|_X = \|f_2\|= 1$. Thus, $f_1, f_2 \in \Delta$ by Proposition \ref{prop:dpointvect}. Let $g(t) = \lambda f_1(t) + (1 - \lambda)f_2(t)$. We need to consider two cases.

Case 1: Suppose $f(t_0) \neq 0$. Then $g(t)  = (1 - \epsilon)(1 - \phi(t))f(t) + \phi(t)f(t_0)$. We see that 
	\begin{eqnarray*}
		\|g(t) - f(t)\|_X &=& \| (1 - \epsilon)(1 - \phi(t))f(t) + \phi(t)f(t_0) - f(t)\|_X\\
		&=& \| (1 - \epsilon)(1 - \phi(t))f(t) + \phi(t)f(t_0) - (1 -\epsilon)f(t) - \epsilon f(t)\|_X\\
		&=& \|(1-\epsilon)(- \phi(t))f(t) +  (1 - \epsilon)\phi(t)f(t_0) + \epsilon\phi(t)f(t_0) - \epsilon f(t)\|_X\\
		&=&\|(1 - \epsilon)\phi(t)(f(t_0) - f(t)) + \epsilon\phi(t)f(t_0) - \epsilon f(t)\|_X\\
		&\leq& (1 -\epsilon)|\phi(t)|\cdot \|f(t) - f(t_0)\|_X +  \epsilon|\phi(t)|\cdot\|f(t_0)\|_X+ \epsilon \|f(t)\|_X\\
		&\leq & (1 -\epsilon)|\phi(t)|\cdot \|f(t) - f(t_0)\|_X + 2\epsilon.
\end{eqnarray*}
For $t \in U$, we see that $(1 - \epsilon)|\phi(t)| \cdot \|f(t) - f(t_0)\|_X \leq (1 - \epsilon)\epsilon < \epsilon$. On the other hand, for $t \in K \setminus U$, we have $(1 - \epsilon)|\phi(t)| \cdot \|f(t) - f(t_0)\|_X \leq 2(1-\epsilon)\epsilon < 2 \epsilon$. Hence, $\|g - f\| < 4\epsilon$, and so $f \in \overline{conv}\Delta$.

Case 2: Now, suppose $f(t_0) = 0$. Then we have $\|f(t)\|_X < \epsilon$ for every $t \in U$. Moreover, notice that $\lambda = \frac{1}{2}$ and $g(t) = (1 - \epsilon)(1 - \phi(t))f(t)$. This implies that
	\begin{eqnarray*}
		\|g(t) - f(t)\|_X &=& \|(1 - \epsilon)(1 - \phi(t))f(t)  - (1 - \epsilon)f(t) - \epsilon f(t)\|_X\\
		&\leq& (1- \epsilon)|\phi(t)|\cdot \|f(t)\|_X + \epsilon \|f(t)\|_X \leq (1- \epsilon)|\phi(t)|\cdot \|f(t)\|_X + \epsilon.
	\end{eqnarray*}
	Notice that $(1- \epsilon)|\phi(t)|\cdot \|f(t)\|_X \leq (1 - \epsilon)\epsilon < \epsilon$ for every $t \in U$. From the fact that $|\phi(t)| < \epsilon$ for $t \in K \setminus U$, we have $(1- \epsilon)|\phi(t)|\cdot \|f(t)\|_X \leq (1- \epsilon)\epsilon < \epsilon$. This shows that $\|g - f\| < 2 \epsilon$, and so $f \in \overline{\text{conv}}\Delta$.

Since $f \in S_{A(K, X)}$ is arbitrary, we see that $S_X \subset \overline{\text{conv}}\Delta$. Therefore, the space $A(K, X)$ has the convex-DLD2P.
\end{proof}

\begin{corollary}
	Let $K$ be a compact Hausdorff space and $\Gamma'$ be the set of limit points in the Shilov boundary of a uniform algebra. If $\Gamma' \cap \Gamma_0 \neq \emptyset$, then the uniform algebra has the convex-DLD2P.
\end{corollary}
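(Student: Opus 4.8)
The plan is to deduce this corollary directly from the preceding theorem by specializing the range space to the scalar field. First I would observe that $\mathbb{C}$, regarded as a one-dimensional complex Banach space, is trivially uniformly convex, so it satisfies the standing hypothesis on the range space in the theorem above. Next I would invoke the remark made earlier in Section 4 that when $X = \mathbb{F}$ the function space $A(K, X)$ is nothing but the uniform algebra $A$ itself; in particular, for a uniform algebra $A(K)$ on $K$, the base algebra of $A(K, \mathbb{C})$ coincides with $A(K)$, and hence the Shilov boundary $\Gamma$ and the Choquet boundary $\Gamma_0$ of the base algebra of $A(K, \mathbb{C})$ are exactly the Shilov boundary and the Choquet boundary of $A(K)$.

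With this identification in place, the hypothesis $\Gamma' \cap \Gamma_0 \neq \emptyset$ of the corollary is literally the hypothesis of the theorem applied to $A(K, \mathbb{C})$. Therefore the theorem yields that $A(K, \mathbb{C}) = A(K)$ has the convex-DLD2P, which is precisely the assertion. I do not anticipate any genuine obstacle here: the only points requiring a word of justification are the two identifications just mentioned (that $\mathbb{C}$ is uniformly convex, and that the base algebra of $A(K,\mathbb{C})$ together with its boundaries reduces to $A(K)$), and both are immediate from the definitions recalled at the beginning of Section 4. If one wished to be fully self-contained rather than quoting the theorem as a black box, one could instead re-run the proof of the theorem verbatim with $X = \mathbb{C}$, in which case the Urysohn-type functions from Lemma~\ref{lem:urysohnAKX} are applied directly in $A(K)$ and Case~2 (where $f(t_0) = 0$) is handled exactly as before; but quoting the theorem is the economical route.

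\begin{proof}
	The scalar field $\mathbb{C}$ is a uniformly convex Banach space, and, as noted after the definition of $A(K, X)$, the space $A(K, \mathbb{C})$ coincides with the uniform algebra on $K$, with the base algebra equal to the uniform algebra itself; consequently the Shilov and Choquet boundaries of the base algebra of $A(K,\mathbb{C})$ are those of the given uniform algebra. Under the hypothesis $\Gamma' \cap \Gamma_0 \neq \emptyset$, the previous theorem (applied with this uniformly convex range space) shows that $A(K, \mathbb{C})$ has the convex-DLD2P. Hence the uniform algebra has the convex-DLD2P.
\end{proof}
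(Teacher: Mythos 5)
Your proposal is correct and matches the paper's intent: the corollary is stated as an immediate specialization of the preceding theorem to the scalar-valued case, exactly as you argue by taking $X=\mathbb{C}$ (which is uniformly convex) and identifying $A(K,\mathbb{C})$ with the uniform algebra itself. The paper gives no separate proof, so your two identifications are precisely the content being left implicit.
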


\section{Conclusion}
In this article, we showed that Daugavet property and the diametral local diameter two property on complex Banach spaces can be also studied through rank-one real-linear operators and rank-one real-projections, respectively. Also, strongly locally uniformly alternatively convex or smooth (sluacs) Banach spaces are locally uniformly nonsquare in both real and complex Banach spaces. Hence, sluacs Banach spaces do not contain $\Delta$-points. By using the polynomial Daugavetian index, we also showed that the vector-valued function space $A(K, X)$ has the polynomial Daugavet property if and only if either the underlying base algebra $A$ or the range space $X$ has the polynomial Daugavet property. As a consequence, we obtained that the polynomial Daugavet property, the Daugavet property, the diametral diameter two properties, and the property $(\mathcal{D})$ are equivalent for uniform algebras.

\section{Acknowledgments}
The authors would like to thank the anonymous referees for their valuable comments and suggestions on the article.

\section{Conflict of Interest}
Authors state no conflict of interest.

\section{Funding Information}
The first author was supported by Basic Science Research Program through the National Research Foundation of Korea(NRF) funded by the Ministry of Education, Science and Technology [NRF-2020R1A2C1A01010377]. 	The second author was supported by Basic Science Research Program through the National Research Foundation of Korea(NRF) funded by the Ministry of Education, Science and Technology [NRF-2020R1A2C1A01010377].


\begin{thebibliography}{widestlabel}

\bibitem{AHLP}
T. Abrahamsen, R. Haller, V. Lima, and K. Pirk, \emph{Delta- and Daugavet Points in Banach Spaces}, Proc. Edin. Math. Soc., \textbf{63}~(2020), no. 2, 475--496, DOI: https://doi.org/10.1017/S0013091519000567.

\bibitem{AHNTT} 
T. Abrahamsen, P. H\'ajek, O. Nygaard, J. Talponen, and S. Troyanski, \emph{Diameter 2 Properties and Convexity}, Studia Math. \textbf{232}~(2016), no. 3, 227--242, DOI: https://doi.org/10.4064/sm8317-4-2016.

\bibitem{ALMP}
T. Abrahamsen, V. Lima, A. Martiny, and Y. Perreau, \emph{Asymptotic Geometry and Delta-points}, Banach. J. Math. Anal, \textbf{16}~(2022), 57, DOI: https://doi.org/10.1007/s43037-022-00210-9.

\bibitem{AKM}
M. Acosta, A. Kami\'nska, and M. Masty\l o, \emph{The Daugavet Property in Rearrangement Invariant Spaces}, Trans. Amer. Math. Soc., \textbf{367}~(2015), no. 6, 4061--4078, DOI: https://doi.org/10.1090/S0002-9947-2014-06166-7.

\bibitem{BR}
J. Bourgain and H. P. Rosenthal, \emph{Martingales Valued in Certain Subspaces of $L_1$}, Israel J. Math., \textbf{37}~(1980), no.1-2, 54--75, DOI: https://doi.org/10.1007/BF02762868.

\bibitem{CGK}
B. Cascales, A. Guirao, and V. Kadets, \emph{A Bishop-Phelps-Bollob\'as type Theorem for Uniform Algebras}, Adv. Math., \textbf{240}~(2013), 370--382, DOI: https://doi.org/10.1016/j.aim.2013.03.005.

\bibitem{CJT}
G. Choi, M. Jung, and H. Tag, \emph{On the Lipschitz numerical index of Banach spaces}, Collect. Math. (2023), DOI: https://doi.org/10.1007/s13348-023-00421-9.

\bibitem{CGMM}
Y. S. Choi, D. Garc\'ia, M. Maestre, and M. Mart\'in, \emph{The Daugavet Equation for Polynomials}, Studia Math., \textbf{178}~(2007), no. 1, 63--84, DOI: https://doi.org/10.4064/sm178-1-4.

\bibitem{CGKM}
Y. S. Choi, D. Garc\'ia, S. K. Kim, and M. Maestre, \emph{Some Geometric Properties of Disk Algebras}, J. Math. Anal. Appl., \textbf{409}~(2014), no. 1, 147--157, DOI: https://doi.org/10.1016/j.jmaa.2013.07.002.

\bibitem{D}
H. G. Dales, \emph{Banach Algebras and Automatic Continuity}, Oxford University Press, New York, 2000.

\bibitem{Diest}
J. Diestel, \emph{Sequences and Series in Banach space}, Springer, New York, 1984.

\bibitem{DU}
J. Diestel and J. J. Uhl, \emph{Vector Measures}, American Mathematical Society, Providence, R.I., 1977.  

\bibitem{Dn}
S. Dineen, \emph{Complex Anaysis on Infinite Dimensional Spaces}, Springer Monographs in Mathematics, Springer, London, 1999.


\bibitem{Hard}
J. Hardtke, \emph{Absolute Sums of Banach Spaces and Some Geometric Proeprties Related to Rotundity and Smoothness}, Banach. J. Math. Anal., \textbf{8}~(2014), no. 1, 295--334, DOI: https://doi.org/10.15352/bjma/1381782101.

\bibitem{IK}
Y. Ivakhno and V. Kadets, \emph{Unconditional Sums of Spaces with Bad Projections}, Visn. Khark. Univ., Ser. Mat. Pryki. Mat. Mekh \textbf{645}~(2004), no. 54, 30--35.

\bibitem{J}
R. C. James, \emph{Uniformly Non-square Banach Spaces}, Ann. of Math, \textbf{80}~(1964), no. 3, 542--550, DOI: https://doi.org/10.2307/1970663.

\bibitem{JR}
M. Jung and A. Rueda Zoca, \emph{Daugavet points and $\Delta$-points in Lipschitz-free spaces}, Studia Math. \textbf{265}~(2022), 37--55, DOI: https://doi.org/10.4064/sm210111-5-5.

\bibitem{Kad}
V. Kadets, \emph{The Diametral Strong Diameter 2 Property of Banach spaces Is the Same as the Daugavet Property}, Proc. Amer. Math. Soc, \textbf{149}~(2020), no. 6, 2579--2582, DOI: https://doi.org/10.1090/proc/15448. 

\bibitem{KMM}
V. Kadets, M. Mart\'in, and J. Mer\'i, \emph{Norm Inequalities for Operators on Banach Spaces}, Indiana Univ. Math. J., \textbf{1}~(2007), no. 1, 2385--2411, DOI: https://doi.org/10.1512/iumj.2007.56.3046.

\bibitem{KMMP}
V. Kadets, M. Martín, J. Merí, and R. Payá, \emph{Convexity and Smoothness of Banach Spaces with Numerical Index One}, Illinois J. Math. \textbf{53}~(2009), no. 1,  163--182, DOI: https://doi.org/10.1215/ijm/1264170844.

\bibitem{KMMW}
V. Kadets, M. Mart\'in, J. Mer\'i, and D. Werner, \emph{Lushness, Numerical Index 1 and the Daugavet Property in Rearrangement Invariant Spaces}, Canad. J. Math, \textbf{65}~(2013), no. 2, 331--348, DOI: https://doi.org/10.4153/CJM-2011-096-2.

\bibitem{KSSW}
V. Kadets, R. Shvidkoy, G.Sirotkin and D. Werner, \emph{Banach Spaces with the Daugavet Property}, Trans. Amer. Math. Soc. \textbf{352}~(2000), no. 2, 855--873, DOI:  https://doi.org/10.1090/S0002-9947-99-02377-6.

\bibitem{KW}
V. Kadets and D. Werner, \emph{A Banach Space with the Schur and the Daugavet Property}, Proc. Amer. Math. Soc. \textbf{136}~(2004), no. 6, 1765--1773, DOI: https://doi.org/10.1090/S0002-9939-03-07278-2.

\bibitem{KK}
A. Kami\'nska and D. Kubiak, \emph{The Daugavet Property in the Musielak-Orlicz spaces}, J. Math. Anal. Appl., \textbf{427}~(2015), no. 2, 873--898, DOI: https://doi.org/10.1016/j.jmaa.2015.02.035.

\bibitem{KLT}
A. Kami\'nska, H. J. Lee, and H. Tag, \emph{Daugavet and Diameter Two Properties in Orlicz-Lorentz Spaces}, J. Math. Anal. Appl., \textbf{529}~(2024), no. 2, DOI: https://doi.org/10.1016/j.jmaa.2023.127289.


\bibitem{KMT}
M. Kato, L. Maligranda, and Y. Takahashi, \emph{On James and Jordan-von Neumann Constants and the Normal Structure Coefficient of Banach Spaces}, Stud. Math. \textbf{144}~(2001), no. 3, 275--295, DOI: https://doi.org/10.4064/sm144-3-5

\bibitem{LT}
H. J. Lee and H. Tag, \emph{Diameter Two Properties in Some Vector-Valued Function Spaces},  Revista de la Real Academia de Ciencias Exactas, Fisicas y Naturales, Serie A. Mathem\'aticas (RACSAM), \textbf{116}~(2022), no. 1, 17, DOI: https://doi.org/10.1007/s13398-021-01165-6.

\bibitem{L}
G. M. Leibowitz, \emph{Lectures on Complex Function Algebras}, Scott, Foresman and Company, 1970.

\bibitem{M}
M. Mart\'in, \emph{The Daugavetian Index of a Banach Space}, Taiwan. J. Math., \textbf{7}~(2003), no. 4, 631--640, DOI: https://doi.org/10.11650/twjm/1500407582.

\bibitem{MMP}
M. Mart\'in, J. Mer\'i, and M. Popov, \emph{The Polynomial Daugavet Property for Atomless $L_1(\mu)$-spaces}, Arch. Math. \textbf{94}~(2010), no. 4, 383--389, DOI: https://doi.org/10.1007/s00013-010-0105-5.


\bibitem{MPR}
M. Mart\'in, Y. Perreau, and A. Rueda Zoca, \emph{Diametral Notions for Elements of the Unit Ball of a Banach Space}, arXiv preprint (2023), DOI: https://doi.org/10.48550/arXiv.2301.04433.

\bibitem{P}
K. Pirk, \emph{Diametral Diameter Two Properties, Daugavet-, and $\Delta$-points in Banach Spaces}, PhD dissertation, Dissertationes Mathematicae Universitatis Tartuensis, \textbf{133}~(2020). 


\bibitem{S}
E. Santos, \emph{The Polynomial Daugavetian Index of a Complex Banach Space}, Arch. Math., \textbf{112}~(2019), no. 4, 407--416, DOI: https://doi.org/10.1007/s00013-018-1268-8.

\bibitem{WSL}
T. Wang, Z. Shi, and Y. Li, \emph{On Uniformly Nonsquare Points and Nonsquare Points of Orlicz Spaces}, Comment. Math. Univ. Carolin., \textbf{33}~(1992), no. 3, 477--484.

\bibitem{W}
D. Werner, \emph{Recent Progress on the Daugavet property}, Irish Math. Soc. Bull., \textbf{46}~(2001), 77--97.

\bibitem{Wo}
P. Wojtaszczyk, \emph{Some Remarks on the Daugavet Equation}, Proc. Amer. Math. Soc., \textbf{115}~(1992), no. 4, 1047--1052, DOI: https://doi.org/10.2307/2159353.
\end{thebibliography}
\end{document}